\setlist{
  listparindent=\parindent,
  parsep=3pt,
}
\titleformat{\subsubsection}[runin]
  {\bf}{\thesubsubsection}{1em}{}
\tikzset{>=stealth}
\tikzset{link/.style={column sep=1.8cm,row sep=0.16cm}}
\tikzset{link3/.style={column sep=1cm,row sep=0.18cm}} % small relation diagrams 
\newcommand{\red}[1]{\textcolor{red}{#1}}
\newcommand{\blue}[1]{\textcolor{blue}{#1}}
	\def\MR#1{}
\newcommand{\bZ}{\mathbb{Z}}    %Integers
\newcommand{\bQ}{\mathbb{Q}}    %Rational numbers
\newcommand{\bR}{\mathbb{R}}    %Real numbers
\newcommand{\bF}{\mathbb{F}}    %Finite field
\newcommand{\bA}{\mathbb{A}}    %Affine space
\newcommand{\cC}{\mathcal{C}}   %Moduli space
\newcommand{\bP}{\mathbb{P}}    %Projective space
\newcommand{\cP}{\mathcal{P}}    %Projective bundle over a finite set
\newcommand{\Alt}{\mathrm{Alt}} %Alternating group
\newcommand{\Aut}{\mathrm{Aut}} %Automorphism group
\newcommand{\Bir}{\mathrm{Bir}} %Group of birational self-maps
\newcommand{\BBir}{\mathrm{BBir}}%Group of birational permutations
\newcommand{\Cr}{\mathrm{Cr}}   %Cremona group
\newcommand{\BCr}{\mathrm{BCr}} %Cremona group without k-base-point
\newcommand{\Bl}{\mathrm{Bl}}   %Blow-up
\newcommand{\Bs}{\mathrm{Bs}}   %Indeterminacy locus
\newcommand{\Gal}{\mathrm{Gal}} %Galois group
\newcommand{\GL}{\mathrm{GL}}   %General linear group
\newcommand{\PGL}{\mathrm{PGL}} %Projective transformations
\newcommand{\Pic}{\mathrm{Pic}} %Picard group
\newcommand{\Sym}{\mathrm{Sym}} %Symmetric group
\newcommand{\ord}{\mathrm{ord}} %The order of a group element
\newtheorem*{thm*}{Theorem}
\newtheorem*{prop*}{Proposition}
\newtheorem*{cor*}{Corollary}
\newtheorem{thm}{Theorem}[section]
\newtheorem{prop}[thm]{Proposition}
\newtheorem{cor}[thm]{Corollary}
\newtheorem{lemma}[thm]{Lemma}
\numberwithin{equation}{section}
\theoremstyle{definition}
\newtheorem{eg}[thm]{Example}
\newtheorem{rmk}[thm]{Remark}
\begin{document}
\title{Bijective Cremona transformations of the plane}
\author{Shamil Asgarli
	\and Kuan-Wen Lai
	\and Masahiro Nakahara
	\and Susanna Zimmermann}
\date{}

\maketitle

%Contact information
\newcommand{\ContactInfo}{{
% additional braces for segregating \footnotesize
\footnotesize

\bigskip
\noindent S.~Asgarli,
\textsc{Department of Mathematics\\
University of British Columbia\\
Vancouver, BC V6T1Z2, Canada}\par\nopagebreak
\noindent\textsc{Email:} \noindent\texttt{sasgarli@math.ubc.ca}

\bigskip
\noindent K.-W.~Lai,
\textsc{Mathematisches Institut der Universit\"at Bonn\\
Endenicher Allee 60, 53121 Bonn, Deutschland}\par\nopagebreak
\noindent\textsc{Email:} \texttt{kwlai@math.uni-bonn.de}

\bigskip
\noindent M.~Nakahara,
\textsc{Department of Mathematics\\
University of Washington\\
Seattle, WA 98195, USA}\par\nopagebreak
\noindent\textsc{Email:} \noindent\texttt{mn75@uw.edu}

\bigskip
\noindent S.~Zimmermann,
\textsc{Univ Angers, CNRS, LAREMA, SFR MATHSTIC, F-49000 Angers, France}\par\nopagebreak
\noindent\textsc{Email:} \noindent\texttt{susanna.zimmermann@univ-angers.fr}
}}

\thispagestyle{titlepage}

%--------------------Abstract
\begin{abstract}
We study the birational self-maps of the projective plane over finite fields that induce permutations on the set of rational points. As a main result, we prove that no odd permutation arises over a non-prime finite field of characteristic two, which completes the investigation initiated by Cantat about which permutations can be realized this way. Main ingredients in our proof include the invariance of parity under groupoid conjugations by birational maps, and a list of generators for the group of such maps.
\end{abstract}

\setcounter{tocdepth}{2}
\tableofcontents

%--------------------Introduction
\section{Introduction}
\label{sect:introduction}

We call a birational self-map of a variety a \emph{birational permutation} if both the map and its inverse are defined at all rational points on the variety. In particular, such a map induces a bijection on the set of rational points. Over a finite field, the rational points form a finite set, so such a bijection induces a permutation in the usual sense. Fixing a variety and a finite ground field, what kind of permutations on the rational points can be realized this way?

In this paper, we focus on the birational self-maps of a projective space $\bP^n$, that is, the \emph{Cremona transformations}. They form a group $\Cr_n(k)$ where $k$ is the ground field. We say that a Cremona transformation is \emph{bijective} if it is a birational permutation. Clearly, bijective elements form a subgroup $\BCr_n(k)\subset\Cr_n(k)$.
When $k = \bF_q$, the finite field of $q$ elements, the actions of bijective elements on the set of $\bF_q$-points determines a group homomorphism
$$\xymatrix{
    \sigma_{q}\colon\BCr_n(\bF_q)\ar[r] & \Sym(\bP^n(\bF_q))
}$$
where $\Sym(\bP^n(\bF_q))$ is the symmetric group of the set $\bP^n(\bF_q)$. Let $\Alt(\bP^n(\bF_q))\subset\Sym(\bP^n(\bF_q))$ be the alternating subgroup, which consists of even permutations. In the case $n=2$, it is known that the image of $\sigma_q$ satisfies
\begin{itemize}
    \item $\mathrm{Im}(\sigma_q) = \Sym(\bP^2(\bF_q))$ if $q$ is odd or $q = 2$,
    \item $\mathrm{Im}(\sigma_q)\supset\Alt(\bP^2(\bF_q))$ if $q = 2^m\geq4$.
\end{itemize}
This result was mainly proved by Cantat \cite{Can09}, but the original proof has a minor gap. In Section~\ref{sect:realizingPermutations}, we review Cantat's construction and fill in the gap with a theorem by Cohen (Theorem~\ref{thm:cohen}) about primitive roots of $\bF_{q^2}$.

The main focus of this paper is the case $q = 2^m\geq 4$. We prove that:

\begin{thm}
\label{mainthm:intro}
For $q=2^m\geq4$, the group $\BCr_2(\bF_q)$ produces only even permutations on $\bP^2(\bF_q)$. As a result, we have $\mathrm{Im}(\sigma_q)=\Alt(\bP^2(\bF_q))$.
\end{thm}

Our proof for Theorem~\ref{mainthm:intro} relies on being able to transfer the parity problem from one surface to another. Let $\Bir_k(X)$ denote the group of birational self-maps of a variety $X$ over a field $k$. In the same spirit of the notation $\BCr_n(k)$, we denote by $\BBir_k(X)\subset\Bir_k(X)$ the subgroup of birational permutations. For surfaces over $\bF_q$, where $q = 2^m\geq4$, the parity of a birational permutation is invariant under groupoid conjugations by birational maps in the following sense:

\begin{thm}
\label{thm:parityBirMapIntro}
Let $X$ and $Y$ be smooth surfaces over $\bF_q$, where $q=2^m\geq4$, together with two birational permutations $\alpha\in\BBir_{\bF_q}(X)$ and $\beta\in\BBir_{\bF_q}(Y)$. Suppose that there exists a birational map $h\colon X\dashrightarrow Y$ such that $\alpha = h^{-1}\beta h$, i.e., the following diagram commutes:
$$\xymatrix{
    X\ar@{-->}[d]_-h\ar@{-->}[rr]^-{\alpha}
    && X\ar@{-->}[d]^-h\\
    Y\ar@{-->}[rr]^-{\beta}
    && Y.
}$$
Then the permutations induced by $\alpha$ on $X(\bF_q)$ and $\beta$ on $Y(\bF_q)$ have the same parity.
\end{thm}

Throughout the paper, we will call the groupoid conjugation demonstrated in Theorem~\ref{thm:parityBirMapIntro} simply as ``conjugation''. Our next result studies birational permutations on conic bundles over $\bP^1$, del Pezzo surfaces, and bijective Cremona maps on $\bP^2$ of finite order. As an application of Theorem~\ref{thm:parityBirMapIntro}, we obtain:

\begin{thm}
\label{thm:evenPermutationsIntro}
Over $\bF_q$, $q = 2^m\geq 4$, a birational permutation on a smooth surface induces an even permutation on the set of $\bF_q$-points if it is conjugate to
\begin{itemize}
    \item a birational permutation on a conic bundle over $\bP^1$ preserving the fiber class,
    \item an automorphism of a rational del Pezzo surface, or
    \item an element of $\BCr_2(\bF_q)$ of finite order.
\end{itemize}
\end{thm}

To complete the proof of Theorem~\ref{mainthm:intro}, we first produce a list of generators for the bijective Cremona group, and then show that every generator is a composition of maps described as in Theorem~\ref{thm:evenPermutationsIntro}. We state the result on the generators below and refer the reader to Lemma~\ref{lem:generating_set_BCr} for the complete list.

\begin{thm}
\label{thm:generatorIntro}
Let $k$ be a perfect field and $\mathbf{T}\subset\Cr_2(k)$ be the set of generators for $\Cr_2(k)$ given by Iskovskikh \cite{Isk91}. Then $\mathbf{T}\cap\BCr_2(k)$ forms a set of generators for $\BCr_2(k)$.
\end{thm}

\begin{rmk}
\label{rmk:history}
The first version of this paper was announced on the arXiv in 2019, where Theorem~\ref{mainthm:intro} remained as a conjecture. In that version, we proved that all but the \emph{quintic transformations} among the generators in Theorem~\ref{thm:generatorIntro} induce only even permutations, and verified with Magma \cite{Magma} that the quintic transformations over $\bF_q$ for $q=4,8,16$ are all even. In June 2021, we communicated with Julia Schneider on the \emph{central symmetry} of a relation diagram of Sarkisov links, which allowed us to attack the quintic transformations and prove our conjecture.

In parallel to our work on the quintic transformations, we learned that Genevois, Lonjou, and Urech \cite{GLU21} also came up with a proof for Theorem~\ref{mainthm:intro} based on our Theorem~\ref{thm:evenPermutationsIntro} and the main theorem of \cite{LS21} with a more combinatorial approach. In fact, they observed that parity can still be defined for a birational self-map on a smooth rational surface over $\bF_{q}$, $q=2^m\geq 4$, even if the map is not bijective, which allowed them to prove Theorem~\ref{mainthm:intro} not only for $\bP^2$ but also for all smooth rational surfaces.
\end{rmk}

\subsubsection*{Organization of the paper}
In Section~\ref{sect:realizingPermutations}, we discuss the realizability of all permutations on the rational points in the plane over finite fields of odd characteristics and $\bF_2$. We study the parity problem over a non-prime field of characteristic~$2$ throughout Sections~\ref{sect:bir-inv-parity}--\ref{sect:no-odd-permutation}, where we assume that $k = \bF_q$ with $q = 2^m\geq4$ unless otherwise specified. In Section~\ref{sect:bir-inv-parity}, we begin with the analysis of the parities induced by linear transformations and then prove Theorem~\ref{thm:parityBirMapIntro}. In Section~\ref{sect:birratsurf}, we study the birational permutations on certain rational surfaces and prove Theorem~\ref{thm:evenPermutationsIntro}. In Section~\ref{sect:no-odd-permutation}, we exhibit a list of generators for $\BCr_2(k)$ when $k$ is a perfect field and prove Theorem~\ref{thm:generatorIntro}. Then we analyze whether each generator induces an even permutation and deduce Theorem~\ref{mainthm:intro}. In Section~\ref{sect:basicProperties}, we answer a few questions about $\BCr_2(k)$ as a subgroup of $\Cr_2(k)$, which include whether it is finitely generated, what its index is, and whether it is a normal subgroup.

\subsubsection*{Acknowledgements}
We thank Brendan Hassett for suggesting us the problem in the present paper. We also thank Zinovy Reichstein for a quick proof that $\mathrm{BCr}_2(k)$ is not finitely generated when $k$ is uncountable. We thank Julia Schneider for discussing with us on the key ideas that allowed us to attack the quintic transformations. Before we are able to prove our conjecture, Lian Duan assisted us designing a Magma code that can compute efficiently the parities of all possible quintic transformations over $\bF_q$ for $q=4,8,16$. We are very grateful for his generous help. Finally, we thank the anonymous referee for their valuable suggestions. During this project, the first author was partially supported by funds from NSF Grant DMS-1701659. The second author is supported by the ERC Synergy Grant ERC-2020-SyG-854361-HyperK. The third author was supported by EPSRC grant EP/R021422/2. The last author was supported by FIBALGA ANR-18-CE40-0003-01, PEPS 2019 ``JC/JC'' and \'Etoiles Montantes de la R\'egion Pays de la Loire.

%--------------------Realizing arbitrary permutations
\section{Realizing arbitrary permutations}
\label{sect:realizingPermutations}

\begin{thm}[\cite{Can09}]
\label{thm:realization}
The image of the homomorphism
$\xymatrix{
    \sigma_q\colon\BCr_2(\bF_q) \to \Sym(\bP^2(\bF_q))
}$
satisfies
\begin{itemize}
    \item $\mathrm{Im}(\sigma_q) = \Sym(\bP^2(\bF_q))$ if $q$ is odd or $q = 2$, and
    \item $\mathrm{Im}(\sigma_q)\supset\Alt(\bP^2(\bF_q))$ if $q = 2^m\geq4$.
\end{itemize}
\end{thm}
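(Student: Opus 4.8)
The plan is to treat the two cases separately, generating enough explicit birational permutations to produce all of $\Sym(\bP^2(\bF_q))$ (resp.\ $\Alt(\bP^2(\bF_q))$) as a group. Since $\PGL_3(\bF_q)\subset\BCr_2(\bF_q)$ acts on $\bP^2(\bF_q)$, and this action is $2$-transitive (in fact sharply enough to move generic configurations around), the standard reduction is: it suffices to exhibit a \emph{single} non-linear bijective Cremona transformation whose induced permutation is an odd permutation (for the first case) or a single one inducing a $3$-cycle or, more realistically, any even permutation supported on a controllable small set (for the second case). Then one invokes the classical fact that $\Sym(N)$ is generated by $\PGL_3(\bF_q)$ together with any odd permutation not already in the normal closure of the linear action, using that $\PGL_3(\bF_q)$ acts $2$-transitively hence primitively; a primitive group together with one transposition-like element (or one $3$-cycle, in the alternating case) generates the full symmetric (resp.\ alternating) group by Jordan's theorem.

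First I would set up the basic building block: the standard quadratic involution $\tau_0\colon[x:y:z]\dashrightarrow[yz:xz:xy]$ and its conjugates $g\tau_0 g^{-1}$ for $g\in\PGL_3(\bF_q)$. Each such map lies in $\BCr_2(\bF_q)$ because $\tau_0$ is an involution whose base locus is the three coordinate points, which are $\bF_q$-rational, so it and its inverse are defined at every $\bF_q$-point once one checks the behaviour along the three coordinate lines — the map $\tau_0$ contracts each coordinate line to a coordinate point and vice versa, so as a bijection of $\bP^2(\bF_q)$ it is a well-defined involution, an explicit computation. I would then compute $\sigma_q(\tau_0)$ explicitly as a permutation: it fixes the coordinate points, and on the remaining points it acts with some cycle structure that can be read off from the orbits of $[a:b:c]\mapsto[bc:ac:ab]$, i.e.\ essentially inversion in each coordinate. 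Counting fixed points and $2$-cycles of this map modulo $2$ gives the parity. This parity computation — getting the sign of $\sigma_q(\tau_0)$ as a function of $q$ — is the first real technical step, and I expect it to come out \emph{even} when $q=2^m\ge4$ (consistent with the conjecture) and to require a different, genuinely \emph{odd} generator when $q$ is odd.

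For $q$ odd, the extra ingredient producing an odd permutation should come from conjugating $\tau_0$ by a transformation that is defined over $\bF_q$ but moves the three base points to a Galois-conjugate triple, or more simply from using a different quadratic map with two conjugate base points — something like the map based at a pair of $\bF_{q^2}$-conjugate points and one rational point. For such a map the induced permutation fixes the rational point and acts on the rest in a way whose parity differs from $\tau_0$'s by an odd amount; subtracting the two signs isolates a single transposition-type discrepancy. Concretely I would compute, for two carefully chosen bijective Cremona maps $f_1,f_2$ with $\sigma_q(f_1)\sigma_q(f_2)^{-1}$ supported on a small explicit set, that this product is an odd permutation; combined with primitivity of $\PGL_3(\bF_q)$ this yields $\mathrm{Im}(\sigma_q)=\Sym(\bP^2(\bF_q))$. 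The case $q=2$ is a finite check: $|\bP^2(\bF_2)|=7$ and one exhibits an odd permutation directly. For $q=2^m\ge4$, since every quadratic-type generator turns out even, I would instead show $\Alt(\bP^2(\bF_q))\subseteq\mathrm{Im}(\sigma_q)$ by producing one bijective Cremona transformation inducing a $3$-cycle, or an even permutation generating (with $\PGL_3(\bF_q)$) the alternating group — here a product $\sigma_q(\tau_0)\sigma_q(g\tau_0 g^{-1})$ for suitable $g$ should land on a small-support even permutation, and Jordan's theorem (a primitive group containing a $3$-cycle is $\Alt$ or $\Sym$; being inside $\Alt$ forces equality) finishes it.

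The main obstacle I anticipate is not the group theory (which is standard once one has the right small-support element) but the \emph{bookkeeping of parities}: one must track the sign of each explicit Cremona permutation as $q$ varies, and the arithmetic of counting fixed points of maps like $[x:y:z]\mapsto[yz:xz:xy]$ over $\bF_q$ — i.e.\ counting solutions of $a^2=bc$ type conditions in $\bP^2(\bF_q)$ — which splits into characteristic-two versus odd-characteristic subcases and is where the dichotomy in the statement originates. A secondary subtlety is verifying that each candidate map is genuinely in $\BCr_2(\bF_q)$, i.e.\ that neither it nor its inverse has a base point over $\bF_q$ and that no $\bF_q$-point is sent to infinity; this is the condition that forces the base loci to consist of closed points of the right degrees and is exactly what links the construction to the classification of base-point configurations used later in the paper.
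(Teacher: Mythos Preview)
Your proposal contains a fundamental error at its foundation: the standard quadratic involution $\tau_0\colon[x:y:z]\mapsto[yz:xz:xy]$ is \emph{not} an element of $\BCr_2(\bF_q)$. By definition a bijective Cremona transformation must be defined, as a rational map, at every $\bF_q$-point, and $\tau_0$ has its base locus at the three rational coordinate points. Worse, $\tau_0$ cannot induce a bijection on $\bP^2(\bF_q)$ for $q>2$: it contracts each coordinate line to a single coordinate point, so for instance all $q-1$ rational points of $\{x=0\}\setminus\{[0{:}1{:}0],[0{:}0{:}1]\}$ are sent to $[1{:}0{:}0]$. Your later suggestion of a quadratic map ``based at a pair of $\bF_{q^2}$-conjugate points and one rational point'' has the same defect---one rational base point is already one too many. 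The only plane quadratic transformations in $\BCr_2(\bF_q)$ are those with a single closed base point of degree $3$ (the maps $f_{33}$ of the paper).

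Even after repairing the building blocks, your group-theoretic reduction via Jordan's theorem makes the problem harder than necessary. To apply it you would need a bijective Cremona map inducing a genuine transposition or $3$-cycle on $\bP^2(\bF_q)$, i.e.\ a map moving exactly two or three rational points and fixing all others, and nothing in your sketch indicates how to achieve such tight control on the support. The paper instead invokes a sharper group-theoretic input (Kantor--McDonough, List, Pogorelov, Bhattacharya): any subgroup of $\Sym(\bP^2(\bF_q))$ containing $\mathrm{PSL}_3(\bF_q)$ either lies inside the collineation group or already contains $\Alt(\bP^2(\bF_q))$. This reduces the task to exhibiting a single $f\in\BCr_2(\bF_q)$ that does \emph{not} preserve collinearity (and, for odd $q$ or $q=2$, is additionally odd). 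The paper constructs such an $f$ on a smooth quadric $Q\subset\bP^3$ equipped with a conic fibration over $\bP^1$: one builds a birational self-map of $Q$ acting as a $(q+1)$-cycle on the rational points of one smooth fibre and as the identity on all other $\bF_q$-fibres, then transports it to $\bP^2$ via projection from the node of a degenerate fibre. No small-support permutation is ever needed.
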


Cantat's proof of Theorem~\ref{thm:realization} is built upon a property about the subgroups of $\Sym(\bP^{n}(\bF_q))$ that contain $\mathrm{PSL}_{n+1}(\bF_q)$: The elements in $\Sym(\bP^n(\bF_q))$ which preserve the collinearity, i.e., map collinear points to collinear points, are called \emph{collineations}. They form a subgroup
$$
    \mathrm{P\Gamma L}_n(\bF_q)\subset\Sym(\bP^n(\bF_q))
$$
which contains $\mathrm{PSL}_{n+1}(\bF_q)$.

\begin{thm}[\cites{Bha81, KM74, Lis75, Pog74}]
\label{thm:groupTheory}
Let $G\subset\Sym(\bP^n(\bF_q))$ be a subgroup.
If $G$ contains $\mathrm{PSL}_{n+1}(\bF_q)$,
then either $G\subset\mathrm{P\Gamma L}_n(\bF_q)$ or $G\supset \mathrm{Alt}(\bP^n(\bF_q))$.
\end{thm}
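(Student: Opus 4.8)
The plan is to deduce this from the classification of finite $2$-transitive permutation groups, or more directly from the maximality of $\mathrm{P\Gamma L}_{n}(\bF_q)$ among subgroups of $\Sym(\bP^n(\bF_q))$ not containing the alternating group. First I would fix notation: set $N = |\bP^n(\bF_q)| = (q^{n+1}-1)/(q-1)$ and regard $G$ as a subgroup of $\Sym_N$ containing $H \colonequals \mathrm{PSL}_{n+1}(\bF_q)$ acting in its natural action on the points (equivalently lines through the origin) of $\bP^n(\bF_q)$. The first step is to recall that this action of $\mathrm{PSL}_{n+1}(\bF_q)$ is $2$-transitive on $\bP^n(\bF_q)$ — indeed $\mathrm{PGL}_{n+1}(\bF_q)$ is even sharply $2$-transitive on ordered pairs of distinct points in a suitable sense — so $G$ itself is $2$-transitive, hence primitive. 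We may therefore invoke the O'Nan--Scott theorem together with the classification of finite simple groups: a $2$-transitive group is either almost simple or of affine type. Since $G \supseteq \mathrm{PSL}_{n+1}(\bF_q)$, which is simple for $(n+1,q)$ outside the two exceptional small cases (and those can be handled by hand), the socle of $G$ is nonabelian simple and $G$ is almost simple.

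Next I would consult the explicit list of $2$-transitive almost simple groups and their point stabilizers. The socle $S$ of $G$ contains (a conjugate of) $\mathrm{PSL}_{n+1}(\bF_q)$ as a $2$-transitive subgroup of the same degree $N$. Running through the classification, the only possibilities with $\mathrm{PSL}_{n+1}(\bF_q) \leq S$ acting with the correct degree are: $S = \mathrm{PSL}_{n+1}(\bF_q)$ itself, in which case $\mathrm{PSL}_{n+1}(\bF_q) \trianglelefteq G \leq \Aut(\mathrm{PSL}_{n+1}(\bF_q))$ and $G$ embeds into $\mathrm{P\Gamma L}_{n+1}(\bF_q)$ acting on points, which is exactly $\mathrm{P\Gamma L}_n(\bF_q)$ in the notation of the excerpt (note: graph automorphisms act on a different set, the flags or the dual projective space, so they do not contribute a permutation of $\bP^n(\bF_q)$); or $S = \mathrm{Alt}(N)$, in which case $G \supseteq \mathrm{Alt}(\bP^n(\bF_q))$; or a short list of sporadic coincidences — e.g. $\mathrm{PSL}_2(\bF_q)$ for small $q$ sitting inside $\mathrm{Alt}_N$ or inside $\mathrm{PSL}_d(\bF_{q'})$ for a different $(d,q')$, such as $\mathrm{PSL}_2(\bF_7)\cong\mathrm{PSL}_3(\bF_2)$, $\mathrm{PSL}_2(\bF_4)\cong\mathrm{PSL}_2(\bF_5)\cong\mathrm{Alt}_5$, $\mathrm{PSL}_2(\bF_9)\cong\mathrm{Alt}_6$, $\mathrm{PSL}_4(\bF_2)\cong\mathrm{Alt}_8$. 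In each of these finitely many coincidences one checks directly that any overgroup of $\mathrm{PSL}_{n+1}(\bF_q)$ in $\Sym_N$ is either contained in $\mathrm{P\Gamma L}_n(\bF_q)$ or contains $\mathrm{Alt}_N$. Finally, I would assemble these cases: in the first and in the verified exceptional cases $G \subseteq \mathrm{P\Gamma L}_n(\bF_q)$; in the remaining case $G \supseteq \mathrm{Alt}(\bP^n(\bF_q))$, which is the dichotomy claimed.

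The main obstacle, and the reason this is cited to four separate papers (\cites{Bha81, KM74, Lis75, Pog74}) rather than proved from scratch, is that a self-contained argument genuinely requires the classification of finite simple groups (or at least the classification of $2$-transitive groups, which itself rests on CFSG) — there is no elementary route. A secondary but real nuisance is the careful bookkeeping of the small exceptional isomorphisms among low-rank classical groups and alternating groups, where the "generic" description of $\Aut(\mathrm{PSL}_{n+1}(\bF_q))$ and its action on $\bP^n(\bF_q)$ breaks down and each case must be inspected individually; one must also be careful that graph automorphisms, while lying in $\Aut(\mathrm{PSL}_{n+1}(\bF_q))$, do not act on $\bP^n(\bF_q)$ and so do not enlarge the collineation group beyond $\mathrm{P\Gamma L}_n(\bF_q)$. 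Since the statement is quoted here only as input, I would present the above as a proof sketch and refer to the cited literature for the full verification.
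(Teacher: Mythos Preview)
The paper does not prove this theorem at all: it is stated with the attribution \cites{Bha81, KM74, Lis75, Pog74} and used as a black box in the proof of Theorem~\ref{thm:realization}. So there is no ``paper's own proof'' to compare your proposal against.

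Your sketch via the classification of finite $2$-transitive groups (hence CFSG) is the standard modern route to results of this type, and your handling of the almost simple case---in particular the remark that graph automorphisms of $\mathrm{PSL}_{n+1}$ do not act on $\bP^n(\bF_q)$, so the normalizer inside $\Sym(\bP^n(\bF_q))$ is only $\mathrm{P\Gamma L}$---is correct. One small slip: $\mathrm{PGL}_{n+1}(\bF_q)$ is $2$-transitive on $\bP^n(\bF_q)$ but not sharply so for $n\geq 2$; this does not affect the argument. It is worth noting that the cited papers (1974--1981) largely predate the completion of CFSG, so the original proofs proceed by more direct methods specific to overgroups of $\mathrm{PSL}_{n+1}(\bF_q)$ rather than by invoking the full $2$-transitive classification; your final paragraph acknowledging that a self-contained proof is nontrivial and deferring to the literature is exactly the right attitude here, and matches what the paper itself does.
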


Applying this result to the image $\sigma_q(\BCr_2(\bF_q))$, Cantat proved that $\sigma_{q}$ is surjective by constructing an element $f\in\BCr_2(\bF_q)$ which
\begin{itemize}[itemsep=-3pt,topsep=5pt]
    \item does not preserve the collinearity on $\bP^2(\bF_q)$, and
    \item induces an odd permutation on $\bP^2(\bF_q)$.
\end{itemize}
Our main goal in this section is to exhibit the construction of $f$ explicitly using input from the theory of primitive roots by Cohen.

%----------Special birational maps on a quadric surface
\subsection{Special birational maps on a quadric surface}
\label{subsect:spbirmap_quadric}

We first recall a key construction in \cite{Can09}*{Section~3}. Fix a smooth quadric $Q$ and a line $L$ in $\bP^{3}$, both defined over $\bF_q$, such that $L$ meets $Q$ in a pair of conjugate points over the extension $\bF_{q^2}/\bF_q$. The projection from $L$ induces a rational map
$\pi_L\colon Q\dashrightarrow\bP^{1}$ fibered in the conics cut out by the planes containing $L$. Assume further that there exists an $\bF_q$-point $P_0$ in the base $\bP^1$ over which the fiber $C_0\colonequals\pi_L^{-1}(P_0)$ is smooth.

This setting implies that every degenerate fiber over $\bF_q$ is a union of distinct lines $L_1\cup L_2$ conjugate to each other over $\bF_{q^2}/\bF_q$, on which the node $P\colonequals L_1\cap L_2$ appears as the only $\bF_q$-point. The projection from $P$ defines a birational map $\pi_P\colon Q\dashrightarrow\bP^{2}$. Let us organize these maps into a diagram:
\begin{equation}
\label{eqn:projFromLP}
\begin{aligned}
\xymatrix{
    Q
    \ar@{-->}[d]_-{\pi_L:\text{ conic fibration}}
    \ar@{-->}[r]_-\sim^-{\pi_P} & \bP^{2}\\
    \;\bP^{1}. &
}
\end{aligned}
\end{equation}
Cantat's construction of a desired $f\in\BCr_2(\bF_q)$ can be divided into two parts:
\begin{enumerate}[label=(\arabic*)]
    \item\label{enum:birQuadric}
    Constructing a birational self-map $g$ on $Q$ that preserves the fiber structure, acts as a prescribed odd permutation on $C_0(\bF_q)$ and as the identity on $\bF_q$-points of the other fibers.
    \item\label{enum:birProj}
    Descending $g$ down to $\bP^2$ as $f\colonequals\pi_P\circ g\circ\pi_P^{-1}$, then showing that $f$ induces an odd permutation on the $\bF_q$-points and does not preserve collinearity.
\end{enumerate}

\begin{eg}
\label{eg:projFromLP_oddChar}
Assume that $q$ is odd. Let $[x:y:z:w]$ be a system of homogeneous coordinates on $\bP^{3}$. Choose a non-square $t\in\bF_q$, namely, $t\neq s^2$ for all $s\in\bF_q$. Then the data
\begin{gather*}
	Q\colonequals\left\{
		x^2 - ty^2 + z^2 = w^2
	\right\}\subset\bP^{3},
	\quad
	L\colonequals\left\{
	    z = w = 0
	\right\}\subset\bP^{3},
\end{gather*}
and $P\colonequals[0:0:1:1]\in Q$ provide an example of (\ref{eqn:projFromLP}). Here the projection map is explicitly given by $\pi_L([x:y:z:w]) = [z:w]$, and the degenerate fiber through $P$ is defined as $x^2 - ty^2 = 0$ on the plane parametrized by the map,
$$
	\bP^2\hookrightarrow\bP^{3}:
	[x:y:u]\mapsto[x:y:u:u].
$$
For a smooth fiber over $\bF_q$, one can choose
\begin{equation}
\label{eqn:centralConic}
    C_0\colonequals
    \pi_L^{-1}([0:1])
    = \left\{
        x^2 - ty^2 = w^2
    \right\}\subset Q.
\end{equation}
Note that $C_0$ lies on the plane $\{z=0\}$.
\end{eg}

Let us construct the map $g$ as in \ref{enum:birQuadric} in the case of odd characteristics using Example~\ref{eg:projFromLP_oddChar}. (The case of characteristic $2$ will be discussed in \S\ref{subsubsect:constrInChar2}.) The process starts by constructing a suitable automorphism on the smooth fiber $C_0$ in \eqref{eqn:centralConic} and then extend it to $Q$. Consider the automorphism on the plain $\{z=0\}$:
$$
    \bP^2\to\bP^2:
    [x:y:w]\mapsto[\alpha x+t\beta y:\beta x+\alpha y:w],
$$
where the parameter $(\alpha,\beta)$ is a point on the affine conic
$$
    S^\circ\colonequals\left\{
        \alpha^2 - t\beta^2 = 1
    \right\}
    \subset\bA^2.
$$
Note that this is the identity map when $(\alpha,\beta) = (1,0)$. For each $(\alpha,\beta)\in S^\circ$, the formula induces an automorphism $g_0\colon C_0\xrightarrow{\sim} C_0$ as one can verify that
\begin{equation}
\label{eqn:preserveConic}
	(\alpha x + t\beta y)^2 - t(\beta x + \alpha y)^2
	= x^2 - ty^2.
\end{equation}

\begin{rmk}
The map $g_0$ can be expressed as
$$
    g_0\colon C_0\xrightarrow{\sim}C_0:
    [x:y:w]\mapsto
    [\alpha x+t\beta y:\beta x+\alpha y:\gamma w]
$$
where $[\alpha:\beta:\gamma]\in\bP^2$ is any $\bF_q$-point on the (projective) conic
$$
    S\colonequals\{\alpha^2 - t\beta^2 = \gamma^2\}\subset\bP^2.
$$
Note that every $\bF_q$-point on $S$ has $\gamma\neq0$ since $t\in\bF_q$ is a non-square. Due to this, we assume that $\gamma = 1$ for the convenience of computation.
\end{rmk}

In the following, we exhibit how to extend $g_0$ to the whole quadric $Q$ as a birational permutation that fixes the $\bF_q$-points not lying on $C_0$. The method is built upon the following lemma about interpolations. Although we only need the case $n=1$ for our purposes, we present the proof of the general case as it is not any harder.

%Interpolation lemma
\begin{lemma}
\label{lemma:interpolation}
Let $\bF_q$ be a finite field. Fix any $P_0\in\bP^n(\bF_q)$ and $P_1, P_2\in\bP^1(\bF_q)$ such that $P_1\neq P_2$. Then there exists a rational map $h\colon\bP^n\dashrightarrow\bP^1$ over $\bF_q$ such that
\begin{itemize}
    \item $h(P_0) = P_1$,
    \item $h(P) = P_2$ for all $P\in\bP^n(\bF_q)\setminus\{P_0\}$.
\end{itemize}
\end{lemma}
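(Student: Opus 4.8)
The plan is to construct $h$ explicitly as a composition of a polynomial map $\bP^n\dashrightarrow\bA^1\subset\bP^1$ that detects whether a point equals $P_0$. First I would reduce to the affine picture: choose affine coordinates so that $P_1=[1:0]$ and $P_2=[0:1]$ (an $\mathrm{PGL}_2(\bF_q)$-change of coordinates on the target), so that the task becomes finding a rational function $\phi\colon\bP^n\dashrightarrow\bA^1$ with $\phi(P_0)=\infty$ (i.e.\ $h=[\phi:1]$ sends $P_0\mapsto[1:0]$ after clearing denominators) and $\phi(P)=0$ for all other $\bF_q$-points. Equivalently, I want a regular function on $\bP^n(\bF_q)\setminus\{P_0\}$ vanishing everywhere except having a pole at $P_0$; realizing this as a ratio of two forms of the same degree gives the rational map $h$.

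The key step is the following interpolation on affine space. Pick a hyperplane $H$ not through $P_0$, so that $\bP^n\setminus H=\bA^n$ contains $P_0$; write $P_0=(a_1,\dots,a_n)\in\bA^n(\bF_q)$. Recall that for each coordinate the polynomial $\prod_{c\in\bF_q}(x_i-c)=x_i^q-x_i$ vanishes at every $\bF_q$-point. Using this, one builds a polynomial $F\in\bF_q[x_1,\dots,x_n]$ that vanishes at all of $\bA^n(\bF_q)$ except at $P_0$, where it takes a nonzero value; the standard construction is
\[
    F(x_1,\dots,x_n)\colonequals\prod_{i=1}^n\bigl(1-(x_i-a_i)^{q-1}\bigr),
\]
since $(x_i-a_i)^{q-1}$ equals $1$ at every $\bF_q$-point with $x_i\neq a_i$ and equals $0$ when $x_i=a_i$, so the product is $1$ at $P_0$ and $0$ at every other $\bF_q$-point of $\bA^n$. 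Then set $\phi\colonequals 1/F$ on $\bA^n$: it is regular away from the zero locus of $F$, in particular at every $\bF_q$-point other than $P_0$ where it vanishes(!) — wait, that's backwards; instead take $h\colon\bA^n\dashrightarrow\bP^1$, $x\mapsto[F(x):1-F(x)]$, so that $h(P_0)=[1:0]=P_1$ and $h(P)=[0:1]=P_2$ for every other $P\in\bA^n(\bF_q)$. Finally homogenize: if $\deg F=d$ and $x_0$ is the extra coordinate with $H=\{x_0=0\}$, replace $F$ by its homogenization $\widetilde F$ of degree $d$ and $1-F$ by $x_0^d-\widetilde F$; this yields a rational map $h\colon\bP^n\dashrightarrow\bP^1$ defined over $\bF_q$ with the required values on $\bP^n(\bF_q)$, after undoing the initial coordinate change on $\bP^1$ by the corresponding element of $\mathrm{PGL}_2(\bF_q)$ sending $[1:0]\mapsto P_1$, $[0:1]\mapsto P_2$.

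The one thing to be careful about — the only real obstacle — is that $h$ must genuinely be a rational map, i.e.\ the two homogeneous forms $\widetilde F$ and $x_0^d-\widetilde F$ must not be proportional and the base locus must avoid the relevant $\bF_q$-points so the stated values make sense; this is automatic from the construction since $\widetilde F$ and $x_0^d-\widetilde F$ sum to $x_0^d\not\equiv 0$ and cannot share a common $\bF_q$-point (at such a point both would vanish, forcing $x_0^d=0$, i.e.\ the point lies on $H$, but all our $\bF_q$-points of interest lie in $\bA^n$). For the case $n=1$ actually needed in the paper this is completely elementary: $h$ is just the rational function $[1-(x-a)^{q-1}:(x-a)^{q-1}]$ in an affine coordinate, homogenized, composed with a Möbius transformation of the target.
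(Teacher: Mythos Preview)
Your construction has a genuine gap: it does not correctly handle the $\bF_q$-points lying on the hyperplane $H=\{x_0=0\}$ at infinity. The lemma requires $h(P)=P_2$ for \emph{every} $P\in\bP^n(\bF_q)\setminus\{P_0\}$, and since $H$ is an $\bF_q$-hyperplane it always contains $\bF_q$-points (indeed $|H(\bF_q)|=|\bP^{n-1}(\bF_q)|$). Your closing remark that ``all our $\bF_q$-points of interest lie in $\bA^n$'' is therefore false.

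Concretely, homogenizing your $F$ gives
\[
\widetilde{F}=\prod_{i=1}^n\bigl(x_0^{q-1}-(x_i-a_i x_0)^{q-1}\bigr),
\qquad
\widetilde{F}\big|_{x_0=0}=(-1)^n\prod_{i=1}^n x_i^{q-1}.
\]
For $n=1$ this is $-x_1^{q-1}$, nonzero at the unique $\bF_q$-point $[0:1]$ of $H$, so $h([0:1])=[-1:1]\neq[0:1]=P_2$. For $n\ge2$ the situation is worse: at a point such as $[0:1:0:\cdots:0]\in H(\bF_q)$ both $\widetilde{F}$ and $x_0^d-\widetilde{F}$ vanish, so $h$ is not even defined there.

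The paper's proof avoids this by working homogeneously from the outset. For each lift $\tilde P_0\in\bF_q^{n+1}\setminus\{0\}$ it builds a homogeneous form $f_{\tilde P_0}$ of degree $(n+1)(q-1)$ that equals $1$ on the $\bF_q$-line through $\tilde P_0$ and $0$ on every other nonzero vector (for $\tilde P_0=(1,0,\dots,0)$ one takes $f_{\tilde P_0}=x_0^{q-1}\prod_{i\ge1}(x_0^{q-1}-x_i^{q-1})$), and then sets $f=\frac{1}{q-1}\sum_P f_P$, a homogeneous form that is nonzero at \emph{every} $\bF_q$-point of $\bP^n$. The map $h=[\gamma f+(\alpha-\gamma)f_{\tilde P_0}:\delta f+(\beta-\delta)f_{\tilde P_0}]$ is then visibly defined and takes the correct value at all $\bF_q$-points. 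Your affine indicator $F$ is essentially the dehomogenization of $f_{\tilde P_0}$; the error is replacing the paper's $f$ by $x_0^d$, which vanishes on $H$ and so cannot serve as the ``denominator'' there.
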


\begin{proof}
For every $P\in\bF_q^{n+1}\setminus\{0\}$, there exists a homogeneous polynomial $f_{P}\in \bF_q[x_0, ..., x_n]$ such that for each $P'\in\bF_q^{n+1}\setminus\{0\}$,
$$
    f_{P}(P') = \begin{cases}
    1 &\text{if }P'=\lambda P\text{ for }\lambda\in\bF_{q}^{\ast} \\
    0 &\text{otherwise}
\end{cases}
$$
Indeed, we may assume that $P=(1,0,...,0)$ after applying a $\GL_{n+1}(\bF_q)$-action, in which case the polynomial
$$
    f_{P} = x_0^{q-1} \prod_{i=1}^{n} (x_0^{q-1}-x_{i}^{q-1})
$$
satisfies the desired property. (The function $f_{P}$ serves the role of the Dirac delta function.) Next, consider the homogeneous polynomial
$$
    f\colonequals
    \frac{1}{q-1}\sum_{P\in\bF_q^{n+1}} f_P.
$$
Then $f(P)=1$ for every $P\in\bF_q^{n+1}\setminus\{0\}$. In order to prove the lemma, let us write $P_1=[\alpha:\beta]$, $P_2=[\gamma:\delta]$, and lift $P_0\in\bP^n(\bF_q)$ to $\widetilde{P}_0\in\bF_q^{n+1}$. Consider $h: \bP^n\dashrightarrow \bP^1$ defined by
$$
    h(P) = [
        \gamma f(P)+(\alpha-\gamma)f_{\widetilde{P}_0}(P) :
        \delta f(P)+(\beta-\delta)f_{\widetilde{P}_0}(P)
    ].
$$
Then $h$ is well-defined, and has the desired interpolation property. \end{proof}

\begin{prop}
\label{prop:extension}
For every $(\alpha_0,\beta_0)\in S^\circ(\bF_q)$, the automorphism
$$
    g_0\colon C_0\xrightarrow{\sim}C_0:
    [x:y:w]\mapsto[\alpha_0x+t\beta_0y:\beta_0x+\alpha_0y:w].
$$
extends to a birational self-map $g\colon Q\dashrightarrow Q$ that preserves the fibration $\pi_L\colon Q\dashrightarrow\bP^{1}$ and satisfies
\begin{itemize}
    \item $g|_{C_0} = g_0$,
    \item $g|_{C} = \mathrm{id}$ for all $\bF_q$-fibers $C\neq C_0$ of $\pi_L$.
\end{itemize}
(This element $g$ can be viewed as the group version of the Dirac delta function.)
\end{prop}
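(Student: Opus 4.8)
The driving idea is that the formula defining $g_0$ makes sense simultaneously on \emph{every} fiber of $\pi_L$, so extending $g_0$ to all of $Q$ amounts to letting the parameter $(\alpha_0,\beta_0)$ vary along the base $\bP^1$ in a manner dictated by Lemma~\ref{lemma:interpolation}. Concretely, I would first record the homogeneous, fiberwise form of \eqref{eqn:preserveConic}: for any $(\alpha,\beta)$ with $\alpha^2-t\beta^2=1$ one has
\[
    (\alpha x+t\beta y)^2-t(\beta x+\alpha y)^2=(\alpha^2-t\beta^2)(x^2-ty^2)=x^2-ty^2 .
\]
Writing $s=z/w$ for the coordinate on $\mathrm{Im}(\pi_L)=\bP^1$, the fiber of $\pi_L$ over $[s:1]$ is $\{x^2-ty^2=(1-s^2)w^2\}$ inside the plane $\{z=sw\}$, and $C_0$ is the fiber over $[0:1]$; the identity above shows that the linear map $[x:y:w]\mapsto[\alpha x+t\beta y:\beta x+\alpha y:w]$ preserves each such fiber. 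Hence, if $\alpha,\beta$ depend on $s$ with $\alpha^2-t\beta^2\equiv 1$ in $\bF_q(s)$, the assignment $[x:y:z:w]\mapsto[\alpha x+t\beta y:\beta x+\alpha y:z:w]$ sends every fiber of $\pi_L$ to itself; it restricts to $g_0$ on $C_0$ once $(\alpha,\beta)|_{s=0}=(\alpha_0,\beta_0)$, and to the identity on another $\bF_q$-fiber once $(\alpha,\beta)$ equals $(1,0)$ over that point.

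To produce such $\alpha,\beta$, I would work with the projective closure $S=\{A^2-tB^2=C^2\}\subset\bP^2$ of the affine conic $S^\circ$. Since $S$ is a smooth conic with the $\bF_q$-point $[1:0:1]$, it is isomorphic to $\bP^1$ over $\bF_q$. If $(\alpha_0,\beta_0)=(1,0)$ then $g_0=\mathrm{id}$ and we take $g=\mathrm{id}$; otherwise the two points $P_1,P_2\in S(\bF_q)$ corresponding to $(\alpha_0,\beta_0)$ and $(1,0)$ are distinct, and Lemma~\ref{lemma:interpolation} (with $n=1$ and $P_0=[0:1]$), via the identification $S\cong\bP^1$, furnishes a rational map $\bP^1\dashrightarrow S$, defined at every point of $\bP^1(\bF_q)$, sending $[0:1]$ to $P_1$ and every other point of $\bP^1(\bF_q)$ to $P_2$. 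Presenting this map by forms $A,B,C\in\bF_q[z,w]$, homogeneous of a common degree $d$ and with no common factor, we obtain $A^2-tB^2\equiv C^2$; and since $P_1,P_2$ lie in the affine chart $\{C\neq 0\}\cong S^\circ$, the form $C$ vanishes at no point of $\bP^1(\bF_q)$. Setting $\alpha=A/C$ and $\beta=B/C$ gives the required pair.

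Finally I would assemble and verify: define $g\colon Q\dashrightarrow Q$ as the restriction to $Q$ of the rational self-map of $\bP^3$ given by the degree-$(d+1)$ forms
\[
    [x:y:z:w]\longmapsto[\,Ax+tBy:\;Bx+Ay:\;zC:\;wC\,].
\]
Using $A^2-tB^2=C^2$ and the equation of $Q$ one checks $g(Q)\subseteq Q$; clearly $\pi_L\circ g=\pi_L$, so $g$ preserves the fibration; and $[x:y:z:w]\mapsto[Ax-tBy:-Bx+Ay:zC:wC]$ is an inverse, so $g\in\Bir(Q)$. Over $[0:1]$ one has $[A:B:C]=[\alpha_0:\beta_0:1]$ with $C\neq 0$, so $g|_{C_0}=g_0$; over any other point of $\bP^1(\bF_q)$ one has $[A:B:C]=[1:0:1]$, so $g$ restricts to the identity on that fiber. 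The only delicate point — and the only place the hypotheses genuinely enter — is guaranteeing that the interpolated $(\alpha,\beta)$ remains an honest point of $S^\circ$ (not a point at infinity of $S$) at $[0:1]$ and at the other $\bF_q$-points; this is exactly why the interpolation targets $P_1,P_2$ are taken inside the chart $\{C\neq 0\}$, and it is what ensures that $g$ is defined on all of $C_0$ and truly equals $g_0$ there instead of degenerating.
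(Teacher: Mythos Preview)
Your proof is correct and follows essentially the same approach as the paper: parametrize $S^\circ$ by $\bP^1$ (you via the projective closure $S\cong\bP^1$, the paper via the explicit stereographic projection from $(-1,0)$), invoke Lemma~\ref{lemma:interpolation} to produce a rational map from the base $\bP^1$ into $S^\circ$ hitting $(\alpha_0,\beta_0)$ over $[0:1]$ and $(1,0)$ over the remaining $\bF_q$-points, and feed the resulting varying $(\alpha,\beta)$ into the fiberwise linear formula. The only differences are cosmetic: you keep everything in homogeneous forms $A,B,C$ and explicitly exhibit the inverse, whereas the paper dehomogenizes to rational functions $\alpha(z,w),\beta(z,w)$.
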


\begin{proof}
Let $\zeta$ be an affine coordinate on the base $\bP^1$ of the fibration $\pi_L$. We identify $S^\circ$ as an open subset of $\bP^1$ via the stereographic projection from $(-1,0)\in S^\circ$:
$$
    S^\circ\hookrightarrow\bP^1:
    (\alpha,\beta)\mapsto\zeta = \frac{\beta}{1+\alpha}.
$$
Let $\zeta_0\in\bP^1$ denote the image of $(\alpha_0,\beta_0)\in S^\circ$ under this map. Note that $(1,0)\in S^\circ$ is mapped to $0\in\bP^1$. Note also that we can recover $\alpha$ and $\beta$ by
\begin{equation}
\label{eqn:inverseStereo}
    \alpha = \frac{1+t\zeta^2}{1-t\zeta^2},
    \quad
    \beta = \frac{2\zeta}{1-t\zeta^2}.
\end{equation}

Let $P_0\colonequals[0:1] = \pi_L(C_0)\in\bP^{1}$. By Lemma~\ref{lemma:interpolation}, there exists a rational function
$\zeta = h(z,w)$ on the base $\bP^1$ over $\bF_q$ such that $h(P_0) = \zeta_0$ and $h(P) = 0$ for all $P\in\bP^{1}(\bF_q)\setminus\{P_0\}$.
Substituting it into (\ref{eqn:inverseStereo}), we obtain two rational functions
\[
    \alpha(z,w) = \frac{1+th(z,w)^2}{1-t h(z,w)^2},
    \quad
    \beta(z,w) = \frac{2h(z,w)}{1-th(z,w)^2},
\]
which determine a birational self-map on $Q$ via the inhomogeneous formula:
\[
    g\colon Q\dashrightarrow Q:
    [x:y:z:w]\mapsto
    [\alpha x+t\beta y:\beta x+\alpha y:z:w].
\]
Note that this is well-defined due to the same computation as (\ref{eqn:preserveConic}). By construction, we have
\begin{itemize}
    \item $(\alpha(P_0),\beta(P_0)) = (\alpha_0,\beta_0)$,
    \item $(\alpha(P),\beta(P)) = (1,0)$ for all $P\in\bP^{1}\setminus\{P_0\}$,
\end{itemize}
which respectively implies that $g|_{C_0} = g_0$ and that $g|_C = \mathrm{id}$ for all $\bF_q$-fibers $C\neq C_0$.
\end{proof}

%----------Odd permutations on the smooth fiber
\subsection{Odd permutations on the smooth fiber}
\label{subsect:OddOnSmoothFiber}

Let us retain the notation from the previous section. Our goal here is to find a $g_0$ which acts transitively on $C_0(\bF_q)$ and thus induces an odd permutation. Note that, as $C_0\cong\bP^1$, it is not hard to find an automorphism on $C_0$ which induces an odd permutation on the $\bF_q$-points. However, it is not obvious that every such automorphism can be extended to $Q$ while keeping control on the induced permutation on the other $\bF_q$-points. In the following, we identify $\bF_{q^2}\cong\bF_q\oplus\sqrt{t^{-1}}\bF_q$ and view $C_0\cong\bP^1$ as the projectivization
$$
    C_0\cong\bP(\bF_q\oplus\sqrt{t^{-1}}\bF_q)
    \cong\bP(\bF_{q^2}).
$$

\begin{lemma}
\label{lemma:multiplicativeAuto}
Assume that $g_0$ is not the identity map, that is, $\alpha\neq 1$. Then, under a suitable choice of isomorphism $C_0\cong\bP(\bF_{q^2})$, the action of $g_0$ can be obtained as the multiplication on $\bF_{q^2}$ by the element
\begin{equation}
\label{eqn:multiplicativeAuto}
    \beta + (\alpha - 1)\sqrt{t^{-1}}\in\bF_{q^2}
\end{equation}
where $\alpha, \beta\in\bF_q$ satisfy $\alpha^2 - t\beta^2 = 1$.
\end{lemma}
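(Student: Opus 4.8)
The plan is to make the composite identification $C_0\cong\bP^1\cong\bP(\bF_{q^2})$ completely explicit and then read off what $g_0$ becomes. We may assume $g_0\neq\mathrm{id}$, i.e. $(\alpha,\beta)\neq(1,0)$ (if $\alpha=1$, the relation $\alpha^2-t\beta^2=1$ forces $\beta=0$), since otherwise the statement is trivial. Set $\theta\colonequals\sqrt{t^{-1}}$, so that $\theta^2=t^{-1}$ and $\{1,\theta\}$ is an $\bF_q$-basis of $\bF_{q^2}=\bF_q\oplus\sqrt{t^{-1}}\bF_q$.

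First I would use the stereographic coordinate $\zeta$ already appearing in (\ref{eqn:inverseStereo}): over $\bF_q$ the conic $C_0$ is parametrized by $[x:y:w]=[1+t\zeta^2:2\zeta:1-t\zeta^2]$, with inverse $\zeta=y/(x+w)$, giving an isomorphism $C_0\xrightarrow{\sim}\bP^1$. Substituting this parametrization into the formula (\ref{eqn:spActionC0}) for $g_0$ (with $\gamma=1$), one finds that $g_0$ sends $\zeta$ to
\[
    \frac{\beta t\zeta^2+2\alpha\zeta+\beta}{(\alpha-1)t\zeta^2+2\beta t\zeta+(\alpha+1)}.
\]
The one step that is not purely formal is the simplification of this a priori degree-two rational function to a Möbius transformation: using $\alpha^2-t\beta^2=1$, the numerator factors as $(\beta t\zeta+\alpha-1)\bigl(\zeta+\tfrac{\beta}{\alpha-1}\bigr)$, while the denominator has vanishing discriminant and equals $(\alpha-1)t\bigl(\zeta+\tfrac{\beta}{\alpha-1}\bigr)^2$. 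Cancelling the common factor $\zeta+\tfrac{\beta}{\alpha-1}$, the action of $g_0$ in the coordinate $\zeta$ is
\[
    \zeta\;\longmapsto\;\frac{\beta\zeta+(\alpha-1)t^{-1}}{(\alpha-1)\zeta+\beta},
\]
i.e. the Möbius transformation with matrix $M=\begin{pmatrix}\beta&(\alpha-1)t^{-1}\\\alpha-1&\beta\end{pmatrix}$ (and $\det M=2(\alpha-1)/t\neq0$ since $g_0\neq\mathrm{id}$ and $q$ is odd).

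Finally I would match this with $\bP(\bF_{q^2})$ via the basis $\{1,\theta\}$: identify $[\zeta:1]\in\bP^1$ with the $\bF_q$-line $\bF_q\cdot(\zeta+\theta)\subset\bF_{q^2}$, which gives an isomorphism $\bP^1\xrightarrow{\sim}\bP(\bF_{q^2})$. Writing an element of $\bF_{q^2}$ as $a+b\theta$ and using $\theta^2=t^{-1}$, multiplication by $\lambda\colonequals\beta+(\alpha-1)\theta$ sends the coordinates $(a,b)$ to $\bigl(\beta a+(\alpha-1)t^{-1}b,\;(\alpha-1)a+\beta b\bigr)$, i.e. it acts by the very matrix $M$ above. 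Hence, under the composite isomorphism $C_0\cong\bP^1\cong\bP(\bF_{q^2})$, the automorphism $g_0$ is carried to multiplication by $\lambda=\beta+(\alpha-1)\sqrt{t^{-1}}$, which is the assertion.

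The main obstacle is exactly the algebraic collapse in the middle step: one must exploit the defining equation $\alpha^2-t\beta^2=1$ of $S^\circ$ to see that the degree-two numerator and denominator acquire a common linear factor, after which everything is Möbius bookkeeping. The only other care needed is fixing conventions consistently — which point one projects from in (\ref{eqn:inverseStereo}), the ordering of the basis $\{1,\sqrt{t^{-1}}\}$, and the Möbius/matrix convention — so that the resulting multiplier is $\beta+(\alpha-1)\sqrt{t^{-1}}$ itself rather than an $\bF_q^\times$-multiple or its Galois conjugate.
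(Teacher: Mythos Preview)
Your proof is correct and follows essentially the same approach as the paper's: both use the stereographic parametrization of $C_0$ (the one from~(\ref{eqn:inverseStereo}), i.e.\ projection from $[-1:0:1]$), compute the conjugated action $\zeta\mapsto(\beta t\zeta^2+2\alpha\zeta+\beta)/((\alpha-1)t\zeta^2+2\beta t\zeta+(\alpha+1))$, exploit $\alpha^2-t\beta^2=1$ to cancel a common linear factor, and then read off the resulting M\"obius matrix as multiplication by $\beta+(\alpha-1)\sqrt{t^{-1}}$ on $\bF_q\oplus\sqrt{t^{-1}}\bF_q$. The only cosmetic difference is that you factor the numerator and denominator directly, while the paper phrases the same step via the quadratic formula.
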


\begin{proof}
First we identify $C_0$ with $\bP^1$ using the stereographic projection from $[-1:0:1]\in C_0$.
On the affine chart $w=1$, this map can be defined as
$$
    \theta\colon C_0\xrightarrow{\sim}\bP^1:
    (x,y)\mapsto\zeta = \frac{y}{1+x}
$$
where $\zeta$ is an affine coordinate on $\bP^1$.
Its inverse $\theta^{-1}\colon\bP^1\xrightarrow{\sim} C_0$ is
$$
    x = \frac{1+t\zeta^2}{1-t\zeta^2},\quad
    y = \frac{2\zeta}{1-t\zeta^2}.
$$
We claim that $g_\theta\colonequals\theta\circ g_0\circ\theta^{-1}\colon\bP^1\xrightarrow{\sim}\bP^1$ is given by the formula
\begin{equation}
\label{eqn:holomorphhicAuto}
    g_\theta(\zeta)
    = \frac{\beta\zeta + t^{-1}(\alpha-1)}{(\alpha-1)\zeta + \beta}.
\end{equation}
Indeed, as $g_0(x,y) = (\alpha x + t\beta y, \beta x + \alpha y)$ in the affine coordinates,
a straightforward computation shows that
\begin{gather*}
    g_\theta(\zeta)
    = \frac{\beta x(\zeta) + \alpha y(\zeta)}
        {1 + \alpha x(\zeta) + t\beta y(\zeta)}
    = \frac{
        \beta\left(\frac{1+t\zeta^2}{1-t\zeta^2}\right)
        +\alpha\left(\frac{2\zeta}{1-t\zeta^2}\right)
        }{
        1 + \alpha\left(\frac{1+t\zeta^2}{1-t\zeta^2}\right)
        + t\beta\left(\frac{2\zeta}{1-t\zeta^2}\right)
        }\\
    = \frac{
        \beta(1+t\zeta^2)
        +\alpha(2\zeta)
        }{
        (1-t\zeta^2) + \alpha(1+t\zeta^2)
        + t\beta(2\zeta)
        }
    = \frac{
        t\beta\zeta^2 + 2\alpha\zeta + \beta
        }{
        t(\alpha-1)\zeta^2 + 2t\beta\zeta + (\alpha+1)
        }.
\end{gather*}
Using the quadratic formula and the fact that $\alpha^2 - t\beta^2 = 1$, the numerator and denominator can be decomposed into linear terms:
$$
    g_\theta(\zeta) = \frac{
        t\beta(\zeta + \frac{\alpha - 1}{t\beta})
            (\zeta + \frac{\alpha + 1}{t\beta})
        }{
        t(\alpha - 1)(\zeta + \frac{\alpha + 1}{t\beta})^2
        }
        = \frac{
        t\beta(\zeta + \frac{\alpha - 1}{t\beta})
        }{
        t(\alpha-1)(\zeta + \frac{\alpha + 1}{t\beta})
        }
$$
which can be further simplified as
$$
    g_\theta(\zeta) = \frac{
        t\beta\zeta + (\alpha - 1)
        }{
        t(\alpha - 1)\zeta + \frac{(\alpha^2 - 1)}{\beta}
        }
        = \frac{
        t\beta\zeta + (\alpha - 1)
        }{
        t(\alpha - 1)\zeta + t\beta
        }
        = \frac{
        \beta\zeta + t^{-1}(\alpha - 1)
        }{
        (\alpha - 1)\zeta + \beta
        },
$$
as claimed. Under the identification $\bP^1\cong\bP(\bF_q\oplus\sqrt{t^{-1}}\bF_q)$, formula (\ref{eqn:holomorphhicAuto}) can be rewritten as
$$
    g_\theta = \begin{pmatrix}
    \beta & t^{-1}(\alpha-1)\\
    \alpha-1 & \beta
    \end{pmatrix}\in\PGL_2(\bF_q).
$$
This matrix acts on $\bF_{q^2}\cong\bF_q\oplus\sqrt{t^{-1}}\bF_q$ as the multiplication by $\beta + (\alpha - 1)\sqrt{t^{-1}}$, which completes the proof.
\end{proof}

Due to this lemma, to find $g_0$ that acts on $C_0(\bF_q)$ transitively, it is sufficient to find a primitive root of $\bF_{q^2}$ of the form (\ref{eqn:multiplicativeAuto}).
To attain this, we use the following result by Cohen:

\begin{thm}[\cite{Coh83}*{Theorem~1.1}]
\label{thm:cohen}
Let $\{\theta_1,\theta_2\}$ be a basis of $\bF_{q^2}$ over $\bF_q$ and let $a_1$ be a non-zero member of $\bF_q$.
Then there exists a primitive root of $\bF_{q^2}$ of the form $a_1\theta_1+a_2\theta_2$ for some $a_2\in\bF_{q}$.
\end{thm}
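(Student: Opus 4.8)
The plan is to obtain a positive lower bound for the number
\[
    N:=\#\{\,a_2\in\bF_q : a_1\theta_1+a_2\theta_2\text{ is a primitive root of }\bF_{q^2}\,\},
\]
since $N>0$ is exactly the assertion. Write $d:=q^2-1=\#\bF_{q^2}^{\ast}$ and $\theta(d):=\varphi(d)/d$, and use Vinogradov's indicator formula for primitive elements,
\[
    \mathbf{1}_{\mathrm{prim}}(x)=\theta(d)\sum_{e\mid d}\frac{\mu(e)}{\varphi(e)}\sum_{\ord(\chi)=e}\chi(x),
\]
the inner sum running over multiplicative characters of $\bF_{q^2}^{\ast}$ of exact order $e$. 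Summing over $a_2\in\bF_q$ gives
\[
    N=\theta(d)\sum_{e\mid d}\frac{\mu(e)}{\varphi(e)}\sum_{\ord(\chi)=e}\ \sum_{a_2\in\bF_q}\chi(a_1\theta_1+a_2\theta_2).
\]
Because $a_1\neq0$ and $\{\theta_1,\theta_2\}$ is a basis, the argument $a_1\theta_1+a_2\theta_2$ is never $0$; and since $\theta_1/\theta_2\notin\bF_q$ we may write $a_1\theta_1+a_2\theta_2=\theta_2(a_2+c)$ with $c:=a_1\theta_1\theta_2^{-1}\in\bF_{q^2}\setminus\bF_q$, so each inner sum equals $\chi(\theta_2)\sum_{a_2\in\bF_q}\chi(a_2+c)$, a multiplicative character sum over the coset $c+\bF_q$ of the subfield $\bF_q\subset\bF_{q^2}$.

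The term $e=1$ contributes the main term $\theta(d)\,q$. For $e>1$ the inner sum is nontrivial, and the Weil-type bound for character sums over a coset of a subfield (which one can derive by writing $c+\bF_q=\{X\in\bF_{q^2}:X^q-X=c^q-c\}$ and completing additively) gives $\bigl|\sum_{a_2\in\bF_q}\chi(a_2+c)\bigr|\leq\sqrt q$. Since there are $\varphi(e)$ characters of exact order $e$ and $\sum_{e\mid d}|\mu(e)|=2^{\omega(d)}$, this yields
\[
    N\ \geq\ \theta(d)\bigl(q-(2^{\omega(d)}-1)\sqrt q\bigr),
\]
which is positive whenever $\sqrt q>2^{\omega(q^2-1)}-1$. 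As $2^{\omega(q^2-1)}=q^{o(1)}$, this settles all $q$ outside a finite set.

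For the remaining $q$ I would not use the crude error bound but Cohen's sieving inequality: fixing a divisor $m_0\mid d$ assembled from the small prime factors of $d$ and letting $p_1,\dots,p_s$ be the other prime divisors, one has $N\geq\sum_{i=1}^{s}N_{p_im_0}-(s-1)N_{m_0}$, where $N_m$ counts the $a_2$ for which $a_1\theta_1+a_2\theta_2$ is not a $p$-th power for any prime $p\mid m$; each $N_m$ is estimated as above with $\omega(m)$ in place of $\omega(d)$. A good choice of $m_0$ shrinks the effective constant enough to leave only a short explicit list of $q$, which are then cleared by a direct finite search on the line $\{a_1\theta_1+a_2\theta_2:a_2\in\bF_q\}$ in $\bF_{q^2}$. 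I expect the main obstacle to be the coset character-sum estimate: unlike a sum over the full field $\bF_{q^2}$ it runs over a proper $\bF_q$-subspace, so it is not literally a Weil sum, and extracting square-root cancellation with the sharp constant $\sqrt q$ — which is what makes the sieve bookkeeping close — is the delicate point; the rest (the sieve combinatorics and the finite case-check) is routine.
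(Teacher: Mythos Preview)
The paper does not give a proof of this statement: Theorem~\ref{thm:cohen} is quoted verbatim from Cohen \cite{Coh83} and used as a black box to derive Corollary~\ref{cor:specialPrimitiveRoot}. There is therefore no ``paper's own proof'' to compare against.

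For what it is worth, your outline is the standard character-sum approach and is, to my knowledge, essentially what Cohen does: Vinogradov's indicator for primitivity, a Weil-type estimate for the sum over the affine $\bF_q$-line $c+\bF_q\subset\bF_{q^2}$, and a sieve plus finite check to handle small $q$. The step you single out as delicate---the bound $\bigl|\sum_{t\in\bF_q}\chi(t+c)\bigr|\le\sqrt q$ for nontrivial $\chi$ and $c\notin\bF_q$---does hold with constant~$1$. One clean way: write the indicator of $\bF_q$ inside $\bF_{q^2}$ via the additive characters $\psi_a$ trivial on $\bF_q$, use the relation $g(\chi,\psi_a)=\bar\chi(a)\,g(\chi,\psi_1)$, and observe that the nonzero elements of $\ker\bigl(\mathrm{Tr}_{\bF_{q^2}/\bF_q}\bigr)$ form a single multiplicative coset $b\,\bF_q^{\ast}$; the remaining sum over $t\in\bF_q^{\ast}$ is then a Gauss sum over $\bF_q$, and the additive character involved is nontrivial precisely because $c\notin\bF_q$. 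This yields $|S|\in\{1,\sqrt q\}$ depending on whether $\chi|_{\bF_q^{\ast}}$ is trivial. With that in hand, the sieve bookkeeping and the residual finite verification are routine, as you say.
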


\begin{cor}
\label{cor:specialPrimitiveRoot}
There exists a primitive root of $\bF_{q^2}$ of the form
$$
    \beta + (\alpha - 1)\sqrt{t^{-1}}\in\bF_{q^2}^\ast
$$
where $\alpha,\beta\in\bF_q$ satisfy $\alpha^2 - t\beta^2 = 1$.
\end{cor}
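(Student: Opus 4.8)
The plan is to deduce the statement from Cohen's theorem (Theorem~\ref{thm:cohen}). The obvious attempt---take $\{1,\sqrt{t^{-1}}\}$ as an $\bF_q$-basis of $\bF_{q^2}$ and try to match the coefficients $\beta$ and $\alpha-1$ against the two coordinates---fails, because Cohen's theorem produces a primitive root along an \emph{affine line} (one coordinate fixed, the other ranging over $\bF_q$), whereas the condition $\alpha^2-t\beta^2=1$ defines a \emph{conic}, along which neither coordinate runs freely. The key idea I would use is that passing to the reciprocal $m^{-1}$ of $m\colonequals\beta+(\alpha-1)\sqrt{t^{-1}}$ turns this conic into a genuine affine line, after which Cohen's theorem applies directly; one then invokes the triviality that the inverse of a generator of a cyclic group is again a generator.

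Concretely, first set $s\colonequals t\cdot\sqrt{t^{-1}}$, so that $s^2=t$; since $t$ is a non-square, $s\notin\bF_q$ and $\{1,s\}$ is an $\bF_q$-basis of $\bF_{q^2}$. For $(\alpha,\beta)\in S^\circ(\bF_q)$ with $\alpha\neq 1$ (so that $m\neq 0$), I would compute the norm of $m$ from its Galois conjugate $\bar m=\beta-(\alpha-1)\sqrt{t^{-1}}$ together with the relation $t\beta^2=\alpha^2-1$, obtaining $N_{\bF_{q^2}/\bF_q}(m)=\beta^2-(\alpha-1)^2 t^{-1}=2(\alpha-1)/t$, and hence
\[
    m^{-1}=\frac{\bar m}{N(m)}=\frac{t\beta}{2(\alpha-1)}-\frac{s}{2}.
\]
Thus $m^{-1}$ lies on the affine line $\{\,c-\tfrac{s}{2}:c\in\bF_q\,\}$, with $1$-coordinate $c=t\beta/(2(\alpha-1))$ and fixed $s$-coordinate $-\tfrac12$.

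Next I would verify that every $c\in\bF_q$ occurs, i.e.\ that $(\alpha,\beta)\mapsto t\beta/(2(\alpha-1))$ maps the $\alpha\neq 1$ locus of $S^\circ(\bF_q)$ onto all of $\bF_q$: given $c$, impose $\beta=2c(\alpha-1)/t$, substitute into $\alpha^2-t\beta^2=1$, and divide by $\alpha-1$; this collapses to the linear equation $\alpha+1=\tfrac{4c^2}{t}(\alpha-1)$, which is solvable because $4c^2$ is a square while $t$ is not, so its leading coefficient differs from $1$. The resulting $\alpha=(4c^2+t)/(4c^2-t)\in\bF_q$ is $\neq 1$ (otherwise $t=0$), and $\beta\in\bF_q$ follows, giving a point of $S^\circ(\bF_q)$ with the prescribed $c$. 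Finally, I would apply Theorem~\ref{thm:cohen} with basis $\{\theta_1,\theta_2\}=\{s,1\}$ and $a_1=-\tfrac12\in\bF_q^\ast$ to obtain a primitive root $w=-\tfrac12 s+a_2$ of $\bF_{q^2}$; by the previous step $w=m^{-1}$ for some $m=\beta+(\alpha-1)\sqrt{t^{-1}}$ with $(\alpha,\beta)\in S^\circ(\bF_q)$, and then $m=w^{-1}$ is the desired primitive root. The only non-mechanical step is the reciprocal observation; the part requiring care is keeping the square-root bookkeeping consistent (the identity $t\cdot\sqrt{t^{-1}}=s$ with $s^2=t$) so that $m^{-1}$ genuinely lands on an $\bF_q$-affine line rather than on another conic.
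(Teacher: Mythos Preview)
Your proof is correct and rests on the same key observation as the paper's: the reciprocal of $m=\beta+(\alpha-1)\sqrt{t^{-1}}$ lies on the affine $\bF_q$-line $\{c-\tfrac{t}{2}\sqrt{t^{-1}}:c\in\bF_q\}$ (your $s$ is just $t\sqrt{t^{-1}}$), so Cohen's theorem applies to the inverse. The paper runs the same idea in reverse---it first invokes Cohen to produce a primitive root $\xi=c-\tfrac{t}{2}\sqrt{t^{-1}}$, then writes $\xi^{-1}=\beta+(\alpha-1)\sqrt{t^{-1}}$ and equates the $\sqrt{t^{-1}}$-coefficients to deduce $\alpha^2-t\beta^2=1$ directly---which spares it your surjectivity computation, since $\beta$ and $\alpha-1$ are automatically the $\bF_q$-coordinates of $\xi^{-1}$ in the basis $\{1,\sqrt{t^{-1}}\}$.
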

\begin{proof}
By applying Theorem~\ref{thm:cohen} to the basis $\left\{1,\sqrt{t^{-1}}\right\}$, we find $c\in\bF_q$ such that
$$
    \xi\colonequals c - \frac{t}{2}\sqrt{t^{-1}}\in\bF_{q^2}
$$
is a primitive root of $\bF_{q^2}$. We claim that $\xi^{-1}$ can be expressed as the required form. Let us write $\xi^{-1} = \beta + (\alpha - 1)\sqrt{t^{-1}}$, then
$$
    \xi = \frac{\beta}{\beta^2 - t^{-1}(\alpha - 1)^2}
    - \frac{\alpha - 1}{\beta^2 - t^{-1}(\alpha - 1)^2}\sqrt{t^{-1}}.
$$
Equating the coefficients of $\sqrt{t^{-1}}$ in the above two expressions for $\xi$, we obtain
$$
    \frac{t}{2} =  \frac{\alpha - 1}{\beta^2 - t^{-1}(\alpha - 1)^2}
$$
which implies that
$
    (\alpha - 1)^2 - t\beta^2 = -2(\alpha - 1),
$
thus $\alpha^2 - t\beta^2 = 1$, as required.
\end{proof}

%----------Induced actions on the projective plane
\subsection{Induced actions on the projective plane}
\label{subsect:actionOnP2}

Here we complete the proof of Theorem~\ref{thm:realization}. We will first treat the case when $q$ is odd using what we have established in the previous sections. The case $q=2$ will be treated separately with a similar strategy, where we will also prove that the image of $\sigma_{q}$ contains $\Alt(\bP^2(\bF_{q}))$ for $q=2^m\geq 4$.

%-----Realizing arbitrary permutations for odd q
\subsubsection{Proof of Theorem~\ref{thm:realization} for odd $q$}
\label{subsubsect:realization_q-odd}

Proposition~\ref{prop:extension} and Corollary~\ref{cor:specialPrimitiveRoot} imply the existence of a birational self-map $g\colon Q\dashrightarrow Q$ acting transitively on the $\bF_q$-points of a smooth fiber $C_0$ and leaving all the other $\bF_q$-fibers fixed. Recall that $\pi_P\colon Q\dashrightarrow\bP^2$ is the projection from the node $P$ of a degenerate fiber of the fibration $\pi_L\colon Q\dashrightarrow\bP^1$. In particular, it has $P$ as the only indeterminacy point and contracts the two branches of the degenerate fiber. In particular, it maps the smooth fiber $C_0$ isomorphically onto a smooth conic $C\colonequals\pi_P(C_0)\subset\bP^2$.

\begin{prop}
\label{prop:inducedmapP2}
The composition $f \colonequals \pi_{P}\circ g \circ \pi_{P}^{-1} \colon\bP^2\dashrightarrow \bP^2$ satisfies the following properties:
\begin{enumerate}[label=\textup{(\arabic*)}]
\item $f\in \BCr_2(\bF_q)$.
\item $f$ fixes all the $\bF_q$-points away from the conic $C$.
\item $f$ acts transitively on $C(\bF_q)$ and thus permutes as a $(q+1)$-cycle.
\item\label{prop:inducedmapP2:colli} There exists a triple of collinear points $P_1, P_2, P_3 \in \bP^2(\bF_q)$ such that $f(P_1), f(P_2), f(P_3)$ are not collinear. 
\end{enumerate}
In particular, the induced permutation on $\bP^2(\bF_q)$ by $f$ does not preserve collineation, and moreover, induces a $(q+1)$-cycle, and hence has odd sign as $q$ is odd.
\end{prop}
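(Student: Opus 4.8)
The plan is to determine, for every $x\in\bP^2(\bF_q)$, whether the birational map $f$ is defined at $x$ and what value it takes; once this is known, assertions (1) and (3), the "$(q+1)$-cycle" statement and the sign follow at once, and (4) is a short combinatorial remark. I fix notation: let $L_1\cup L_2=\pi_L^{-1}(\pi_L(P))$ be the degenerate fibre through $P$, let $Q_i\colonequals\pi_P(L_i)\in\bP^2$ be the two conjugate, hence non-$\bF_q$-rational, points obtained by contracting the rulings $L_1,L_2$, and let $\ell\subset\bP^2$ be the line contracted by $\pi_P^{-1}$, equivalently the line through $Q_1,Q_2$, equivalently $\ell=\pi_P(E)$ for $E\subset\Bl_PQ$ the exceptional curve. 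One checks from the explicit equations that $C_0\cap\ell\subseteq\{Q_1,Q_2\}$, so $C_0(\bF_q)$ and $\ell(\bF_q)$ are disjoint and $\bP^2(\bF_q)=C_0(\bF_q)\sqcup\ell(\bF_q)\sqcup R$ for a remaining set $R$.

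For any $x\in\bP^2(\bF_q)$ with $x\notin\ell$, the composition $\pi_P\circ g\circ\pi_P^{-1}$ can be evaluated directly: $\pi_P^{-1}$ is a morphism at $x$ (its base points are $Q_1,Q_2$), the point $\widetilde x\colonequals\pi_P^{-1}(x)\in Q(\bF_q)$ is $\neq P$ (because $\pi_P^{-1}$ sends exactly $\ell$ to $P$), $g$ is a morphism at $\widetilde x$, and $g(\widetilde x)\neq P$ (since $g$ is a morphism at $P$ fixing the fibre $L_1\cup L_2$, hence $P$), so $\pi_P$ is a morphism at $g(\widetilde x)$ and $f(x)=\pi_P(g(\widetilde x))$ is defined. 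If here $x\in C_0$, then $\widetilde x\in C_0\subset Q$ and, since $\pi_P$ restricts to the identity $C_0\subset Q\xrightarrow{\sim}C_0\subset\bP^2$ in the chosen coordinates, $f|_{C_0}$ equals $g|_{C_0}=g_0$ by Proposition~\ref{prop:extension}; by Lemma~\ref{lemma:multiplicativeAuto} and Corollary~\ref{cor:specialPrimitiveRoot} the automorphism $g_0$ is, under a suitable isomorphism $C_0\cong\bP(\bF_{q^2})$, multiplication by a primitive root of $\bF_{q^2}$, whose class generates the cyclic group $\bF_{q^2}^\ast/\bF_q^\ast$ of order $q+1$, so the permutation $f$ induces on $C_0(\bF_q)$ is a single $(q+1)$-cycle, which is (3). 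If instead $x\notin C_0\cup\ell$, then $\widetilde x\notin C_0$, the fibre of $\pi_L$ through $\widetilde x$ is an $\bF_q$-fibre distinct from $C_0$ (because $\pi_L$ is a morphism at every $\bF_q$-point of $Q$), so $g$ fixes $\widetilde x$ by Proposition~\ref{prop:extension} and $f(x)=\pi_P(\widetilde x)=x$.

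It remains to handle $x\in\ell(\bF_q)$, which I expect to be the main obstacle: there $\pi_P^{-1}$ contracts $x$ to $P$, which is both fixed by $g$ and the one indeterminacy point of $\pi_P$, so the composition cannot be evaluated naively and the indeterminacy must genuinely be resolved. The key is that $g$ is a local isomorphism at $P$ with $dg_P=\mathrm{id}$: $g$ is a morphism at $P$ with $g(P)=P$, and it fixes $L_1\cup L_2$ pointwise by Proposition~\ref{prop:extension}, so $dg_P$ is the identity on $T_PL_1$ and on $T_PL_2$; as $P$ is the node of $L_1\cup L_2$, these lines span $T_PQ$, forcing $dg_P=\mathrm{id}$. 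Hence $g$ lifts to a local isomorphism $\widetilde g$ near $E$ on $Z\colonequals\Bl_PQ$ acting on $E=\bP(T_PQ)$ as $\bP(dg_P)=\mathrm{id}$, i.e.\ fixing $E$ pointwise; writing $\rho\colon Z\to Q$ for the blow-down, $\pi_P\rho\colon Z\to\bP^2$ is a morphism restricting to an isomorphism $E\xrightarrow{\sim}\ell$, and $f=(\pi_P\rho)\circ\widetilde g\circ(\pi_P\rho)^{-1}$ as birational maps. For $x\in\ell(\bF_q)$ the point $(\pi_P\rho)^{-1}(x)$ is the unique $\bF_q$-point of $E$ over $x$, it is fixed by $\widetilde g$, and pushing it back down yields $f(x)=x$. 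Together with the previous paragraph this proves (2): $f$ is defined at every $\bF_q$-point, acting as $g_0$ on $C_0(\bF_q)$ and as the identity on $\ell(\bF_q)\cup R$.

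Finally, assertions (1) and (4). The map $g^{-1}$ has the identical structure---it preserves $\pi_L$, is the identity on every $\bF_q$-fibre $\neq C_0$, equals $g_0^{-1}$ on $C_0$, and is a local isomorphism at $P$ with differential the identity---so the same argument shows $f^{-1}=\pi_P\circ g^{-1}\circ\pi_P^{-1}$ is defined at every $\bF_q$-point, and the two induced permutations of $\bP^2(\bF_q)$ are mutually inverse; hence $f\in\BCr_2(\bF_q)$, which is (1), and the permutation $f$ induces is the $(q+1)$-cycle on $C_0(\bF_q)$ extended by the identity, of sign $(-1)^q=-1$ since $q$ is odd. For (4), pick distinct $P_1,P_2\in C_0(\bF_q)$ and set $m=\overline{P_1P_2}$, $m'=\overline{f(P_1)f(P_2)}$: since $g_0$ is a $(q+1)$-cycle on $C_0(\bF_q)$ with $q+1\geq 4$ even, $g_0^{2}$ has no fixed point, so $\{f(P_1),f(P_2)\}\neq\{P_1,P_2\}$, whence $m\neq m'$ because a line meets the conic $C_0$ in at most two points; as $q\geq 3$ the line $m$ then carries an $\bF_q$-point $P_3$ lying neither on $C_0$ nor on the single point $m\cap m'$, so $P_1,P_2,P_3$ are collinear while $f(P_1),f(P_2)\in m'$ and $f(P_3)=P_3\notin m'$ are not. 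In particular $f$ does not preserve collinearity, which completes the argument.
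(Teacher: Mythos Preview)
Your proof is correct and follows the same overall strategy as the paper---evaluate $f=\pi_P\circ g\circ\pi_P^{-1}$ directly on $\bF_q$-points, resolving the indeterminacy at $P$ via the blow-up $\Bl_PQ$. Two minor differences are worth noting. First, your handling of the points on $\ell(\bF_q)$ is actually more careful than the paper's: the paper's proof of (2) simply asserts that for $A\in\bP^2(\bF_q)\setminus C_0(\bF_q)$ the point $\pi_P^{-1}(A)\in Q$ is fixed by $g$, which as written does not cover $A\in\ell(\bF_q)$ (there $\pi_P^{-1}(A)=P$ and $\pi_P$ is undefined at $P$); your observation that $dg_P=\mathrm{id}$, because $g$ fixes the two rulings $L_1,L_2$ through $P$ pointwise and their tangent directions span $T_PQ$, so that the lift $\widetilde g$ fixes the exceptional curve $E$ pointwise and hence $f$ fixes $\ell$ pointwise, is precisely what closes this gap. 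Second, for (4) the paper takes the tangent line to $C_0$ at a rational point (which meets $C_0$ only at that point) in place of your secant line; both arguments are short and equally valid.
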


\begin{proof}
Let us prove the statements one-by-one.
\begin{enumerate}[label=\textup{(\arabic*)}]
\item We have the commutative diagram:
$$\xymatrix{
    & \Bl_{P} Q \ar[ld]^{\widetilde{\pi}_{P}}\ar[d] \ar@{-->}[r]^{\widetilde{g}} & \Bl_{P}Q \ar[d] \ar[rd]^{\widetilde{\pi}_{P}} & \\
    \bP^{2} \ar@{-->}@/^1pc/[ru]^{\widetilde{\pi}_{P}^{-1}} \ar@{-->}[r]_-{\pi_P^{-1}} & Q \ar@{-->}[r]_-{g} &
    Q \ar@{-->}[r]_-{\pi_P} & \bP^2.
}$$
Note that this diagram factorizes $f=\pi_{P}\circ g\circ\pi_{P}^{-1}$ as $f=\widetilde{\pi}_{P}\circ\widetilde{g}\circ \widetilde{\pi}_{P}^{-1}$. The two lines passing through $P$ in $Q$ become disjoint $(-1)$-curves on $\Bl_PQ$ that are Galois conjugate to each other, and the morphism $\widetilde{\pi}_P$ is the blow-down of these two lines. Hence $\widetilde{\pi}_P$ and $\widetilde{\pi}_{P}^{-1}$ are both defined at all $\bF_q$-points.

It suffices to show that $\widetilde{g}$ induces a bijection on the $\bF_q$-points of $\Bl_PQ$. Indeed, $g$ induces a bijection on $Q(\bF_q)$ and fixes $P$. Hence $\widetilde{g}$ induces a birational self-map, and thus an automorphism, on the exceptional curve over $P$. As a result, $f$ is defined at all $\bF_q$-points of $\bP^2$. By symmetry, the same argument applies to $f^{-1}$, and hence $f\in\BCr_2(\bF_q)$.

\item Let $A\in \bP^2(\bF_q)\setminus C(\bF_q)$. Then $\pi^{-1}_{P}(A)\in Q\setminus C_0$, which implies $g(\pi_{P}^{-1}(A))=\pi_{P}^{-1}(A)$. Hence $f(A)=\pi_{P}\circ g\circ \pi_{P}^{-1}(A) = A$.

\item This follows from the relation $f = \pi_{P}\circ g\circ \pi_{P}^{-1}$ and the fact that $g$ permutes the points of $C_0(\bF_q)$ as a $(q+1)$-cycle.

\item Take an $\bF_q$-point $B$ on $C$ and consider the tangent line $\ell\colonequals T_BC\subset\bP^2$. Then $\ell\cap C=\{B\}$. The map $f$ acts as the identity on all the $\bF_q$-points of $\ell$ except for $B$, and sends $B$ to another point on $C$ not lying on $\ell$. Consequently, the map does not preserve collinearity.
\end{enumerate}
\end{proof}

Theorem~\ref{thm:realization} in the case of odd $q$ is then a consequence of Theorem~\ref{thm:groupTheory} and Proposition~\ref{prop:inducedmapP2}.

\begin{rmk}\label{rmk:lying-on-z=0}
Recall that $Q$ and $L$ are defined as
\begin{gather*}
	Q\colonequals\left\{
		x^2 - ty^2 + z^2 = w^2
	\right\}\subset\bP^{3},
	\quad
	L\colonequals\left\{
	    z = w = 0
	\right\}\subset\bP^{3},
\end{gather*}
and $P = [0:0:1:1]\in Q$. Projection from $P$ defines a birational map
$$
    \pi_P\colon Q\dashrightarrow \bP^2:
    [x:y:z:w]\mapsto
    [x:y:w-z]
$$
whose inverse is given by
$$
    \pi_P^{-1}\colon \bP^2 \dashrightarrow Q :
    [x:y:u] \mapsto [2ux : 2uy : x^2-ty^2-u^2 : x^2-ty^2+u^2].
$$
On the other hand, the map $g$ has the form
$$
    g\colon Q\dashrightarrow Q:
    [x:y:z:w]\mapsto
    [\alpha x + t\beta y:\beta x + \alpha y:
    \gamma z:\gamma w]
$$
where $\alpha$, $\beta$, $\gamma$ are homogeneous in $z$, $w$ and satisfy $\alpha^2 - t\beta^2 = \gamma^2$. These expressions allow one to compute $f = \pi_{P}\circ g\circ \pi_{P}^{-1}$ explicitly. Also recall that the smooth fiber $C_0 = \pi_L^{-1}([0:1])$ lies on $H\colonequals\{z=0\}$. To compute the action of $f$ on $C = \pi_P(C_0)$, one may identify $H$ with the codomain $\bP^2$ of $\pi_P$ via $[x:y:u]\mapsto [x:y:0:u]$.
\end{rmk}

%-----The construction in characteristic 2
\subsubsection{The construction in characteristic $2$}
\label{subsubsect:constrInChar2}

We first explain the construction over $\mathbb{F}_2$. Consider the quadric surface given by
$$
	Q\colonequals\left\{
		x^2+xy+y^2+z^2+x(z+w)+y(z+w)+zw=0
	\right\}\subset\bP^{3},
$$
As before, let $L\colonequals\{z=w=0\}\subset \bP^3$. We consider the projection $\bP^3\dashrightarrow\bP^1$ given by $[x:y:z:w]\mapsto [z:w]$. Restricting the map  to $Q$, we get a conic bundle $\pi_L: Q\to\bP^1$. We analyze the conics on the three $\bF_2$-fibers:
\begin{align*}
    C_0\colonequals \pi_L^{-1}([0:1]) &\cong \{[x:y:u]:  x^2+xy+y^2+xu+yu=0\} \\
    C_1\colonequals \pi_L^{-1}([1:0]) &\cong
    \{[x:y:u]: 
    x^2+xy+y^2+z^2+xz+yz=0\} \\
    C_2\colonequals \pi_L^{-1}([1:1])&\cong 
    \{[x:y:u]: x^2+xy+y^2=0]\}
\end{align*}
where we used the identification $H=\{z=0\}\cong \bP^2$ with homogeneous coordinates $x, y$ and $u$ mentioned in Remark~\ref{rmk:lying-on-z=0}.

One can check that $C_0$ is smooth, while $C_1$ and $C_2$ are both union of two $\bF_4$-lines meeting at a single $\bF_2$-point. In fact, 
\begin{align*} 
C_{0}(\bF_2)&= \{ [0:0:1], [1:0:1], [0:1:1]\}  \\
C_{1}(\bF_2)&=\{ [1:1:1] \} \\
C_{2}(\bF_2)&=\{ [0:0:1] \}
\end{align*}
Consider the map $g:\bP^3\to \bP^3$, given by $[x: y: z: w] \mapsto [y: x: z: w]$. By the symmetry of the defining equation, the quadric $Q$ is preserved under $g$. It is also evident that $g$  acts as a single transposition on $C_{0}(\bF_2)$, and trivially on both $C_{1}(\bF_2)$ and $C_{2}(\bF_2)$. Using the same argument given in Proposition~\ref{prop:inducedmapP2}, we see that the induced map $f=\pi_{P}\circ g\circ\pi_{P}^{-1}$ is an element of $\BCr_2(\bF_2)$. Furthermore, the induced permutation $f:\bP^2(\bF_2)\to\bP^2(\bF_2)$ is odd, as it transitively permutes the three points of $C_0(\bF_q)$. It also does not preserve collineation for the same reason explained in Proposition~\ref{prop:inducedmapP2}~\ref{prop:inducedmapP2:colli}. By Theorem~\ref{thm:groupTheory}, $\sigma_{2}(\BCr_2(\bF_2))=\Sym(\bP^2(\bF_2))$.

For $q=2^m\geq 4$, following Cantat, we use the quadric
$$
    Q\colonequals\{
        x^2 + r xy + s y^2 + z^2 + x(z+w) + y(z+w)+zw=0
    \}
$$
where $r, s\in\bF_q$ are chosen so that the polynomial $X^2+rX+s=0$ has no roots in the field $\bF_q$. The map $g:\bP^3\to \bP^3$, given by $[x: y: z: w] \mapsto [y: x: z: w]$ preserves the quadric. It can be checked that the fiber $C_0\colonequals \pi_L^{-1}([0:1])$ is a smooth conic. Using the same argument in Proposition~\ref{prop:inducedmapP2}, we see that the induced map $f=\pi_{P}\circ g\circ\pi_{P}^{-1}$ is an element of $\BCr_2(\bF_q)$. Moreover, the induced permutation $f:\bP^2(\bF_q)\to\bP^2(\bF_q)$ does not preserve collineation by the same argument given in Proposition~\ref{prop:inducedmapP2}~\ref{prop:inducedmapP2:colli} that involves looking at the tangent line: $f$ fixes all the $\bF_q$-points on the tangent line $T_P C$ except for $P$, while $P$ is sent by $f$ to another $\bF_q$-point away from $T_P C$. By Theorem~\ref{thm:groupTheory}, we deduce that $\sigma_{q}(\BCr_2(\bF_q))\supset \Alt(\bP^2(\bF_q))$.

%--------------------Birational invariance of parity
\section{Birational invariance of parity}
\label{sect:bir-inv-parity}

In this section, we prove that automorphisms of $\bP^n$ for $n\geq 1$ over $\bF_q$, where $q=2^m\geq 4$, induce only even permutations on the set of rational points. This result allows us to study the parity problem without specifying a coordinate system on $\bP^n$. Then we prove Theorem~\ref{thm:parityBirMapIntro}, namely, the invariance of parity under conjugations by birational maps. The proof of this theorem is built on the fact that one can resolve a birational map between surfaces over a perfect field via a sequence of blow-ups at closed points.

\begin{eg}
It is easy to construct a counterexample to Theorem~\ref{thm:parityBirMapIntro} for odd $q$ and $q = 2$. Consider an element $g\in\PGL_3(\bF_q)$ of the form
$$
    g = \begin{pmatrix}
        a & b & 0\\
        c & d & 0\\
        0 & 0 & 1
    \end{pmatrix}.
$$
Note that $g$ fixes $p = [0:0:1]$. Let $X$ be the blow-up of $\bP^2$ at $p$. Then $g$ lifts to an automorphism on $X$ which acts on the exceptional $\bP^1$ as
$
    \begin{pmatrix}
        a & b\\
        c & d
    \end{pmatrix},
$
and the parity is altered via the lifting if this matrix acts as an odd permutation on $\bP^1(\bF_q)$.
For example, one can choose
$
    \begin{pmatrix}
        1 & 0\\
        0 & \alpha
    \end{pmatrix}
$
if $q$ is odd, where $\alpha$ is a generator for the multiplicative group $\bF_q^\ast$, and choose
$
    \begin{pmatrix}
        1 & 1\\
        0 & 1
    \end{pmatrix}
$
if $q = 2$.
\end{eg}

%----------Parities induced by linear transformations
\subsection{Parities induced by linear transformations}
\label{subsect:linearTransform}

According to Waterhouse \cite{Wat89}, the group $\mathrm{GL}_{n+1}(\bF_q)$ is generated by two elements $A_n$ and $B_n$ for all $q$ and $n\geq 1$, which clearly descend to generators for $\PGL_{n+1}(\bF_q)$. Therefore, to prove that $\PGL_{n+1}(\bF_q)\subset\Alt(\bP^n(\bF_q))$, it is sufficient to verify that $A_n$ and $B_n$ induce even permutations.

The general formulas for $A_n$ and $B_n$ depend on whether $n = 1$ or $n\geq 2$. Let us denote by $I_{n+1}$ the identity matrix of size $n+1$, and $E_{i,j}$ the square matrix of size $n+1$ with $1$ at the $(i,j)$-th entry and zeros elsewhere. In the case $n\geq2$, we can choose a generator $\alpha$ for the multiplicative group $\bF_{q}^{\ast}$,
and let
\begin{align*}
    A_n = I_{n+1} + (\alpha-1)E_{2,2} + E_{n+1, 1},
    \qquad
    B_n = E_{1,2}+E_{2,3}+\cdots +E_{n+1, 1}.
\end{align*}
For example, when $n = 2$ we get
$$
    A_2 = \begin{pmatrix}
    1 & 0 & 0\\
    0 & \alpha & 0\\
    1 & 0 & 1
    \end{pmatrix},
    \qquad
    B_2 = \begin{pmatrix}
    0 & 1 & 0\\
    0 & 0 & 1\\
    1 & 0 & 0
    \end{pmatrix}.
$$
In the case $n = 1$ and $q > 2$, we choose a generator $\beta$ for the multiplicative group $\bF_{q^2}^{\ast}$, and define
$$
    \alpha\colonequals\beta^{q+1},\quad
    s\colonequals\mathrm{Tr}(\beta)=\beta+\beta^{q},\quad
    r\colonequals-\mathrm{Norm}(\beta)=-\beta^{q+1}.
$$
Then we let
$A_1 = \begin{pmatrix}
    0 & r \\
    1 & s 
\end{pmatrix}$
and
$B_1 = \begin{pmatrix}
    \alpha & 0 \\
    0 & 1 
\end{pmatrix}$.
We emphasize that the case $n = 1$ and $q = 2$ is not covered by these formulas. In this last case, $\GL_2(\bF_2)$ is generated by
$\begin{pmatrix}
    0 & 1\\
    1 & 1
\end{pmatrix}$
and
$\begin{pmatrix}
    1 & 1\\
    0 & 1
\end{pmatrix}$
which act respectively as a $3$-cycle and a $2$-cycle on $\bP^1(\bF_2)$.

%Parity of A1 & B1
\begin{lemma}
\label{lemma:parityA1B1}
Both $A_1$ and $B_1$ induce even permutations on $\bP^1(\bF_q)$ where $q = 2^m\geq4$.
\end{lemma}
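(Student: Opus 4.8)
The plan is to show that each of $A_1$ and $B_1$ acts on the $(q+1)$-element set $\bP^1(\bF_q)$ as a single cycle of \emph{odd} length, which is automatically an even permutation, since a cycle of odd length $\ell$ is a product of $\ell-1$ transpositions. Because $q=2^m$, both $q-1$ and $q+1$ are odd, so it suffices to prove that $B_1$ acts as a $(q-1)$-cycle and $A_1$ as a $(q+1)$-cycle on $\bP^1(\bF_q)$.

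For $B_1=\mathrm{diag}(\alpha,1)$: in the affine coordinate $z$ on $\bP^1(\bF_q)=\bF_q\cup\{\infty\}$ it acts by $z\mapsto\alpha z$, fixing $0$ and $\infty$ and permuting $\bF_q^*$. The point is that $\alpha$ generates $\bF_q^*$: indeed $\beta$ generates $\bF_{q^2}^*$, so $\alpha=\beta^{q+1}$ has order $(q^2-1)/(q+1)=q-1$. Hence $z\mapsto\alpha z$ is a single $(q-1)$-cycle on $\bF_q^*$, so $B_1$ is a $(q-1)$-cycle on $\bP^1(\bF_q)$; as $q-1$ is odd, $B_1$ induces an even permutation.

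For $A_1=\begin{pmatrix}0&r\\1&s\end{pmatrix}$: I would first compute its characteristic polynomial $\lambda^2-s\lambda-r$, which in characteristic $2$ equals $\lambda^2+s\lambda+r=\lambda^2+(\beta+\beta^q)\lambda+\beta^{q+1}=(\lambda-\beta)(\lambda-\beta^q)$, using $r=-\mathrm{Norm}(\beta)=\beta^{q+1}$ and $s=\mathrm{Tr}(\beta)$. Thus $A_1$ has eigenvalues $\beta,\beta^q\in\bF_{q^2}\setminus\bF_q$: it is a regular semisimple element whose centralizer in $\PGL_2(\bF_q)$ is a non-split maximal torus $T\cong\bF_{q^2}^*/\bF_q^*$ of order $q+1$, and under this identification $A_1$ corresponds to the class of $\beta$, an element of order $q+1$, so $A_1$ generates $T$. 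The key observation is that $T$ acts freely on $\bP^1(\bF_q)$: identifying $\bF_q^2$ with $\bF_{q^2}$ so that $T$ acts by multiplication, an element induced by $c\in\bF_{q^2}^*$ fixes a line $[v]$ with $v\neq0$ only if $cv\in\bF_q v$, forcing $c\in\bF_q$, i.e. the element is trivial in $T$. Since $|T|=q+1=|\bP^1(\bF_q)|$, the action of $T$ is simply transitive, so its generator $A_1$ is a single $(q+1)$-cycle; as $q+1$ is odd, $A_1$ induces an even permutation. (If one prefers to avoid torus terminology: $A_1^k$ has eigenvalues $\beta^k,\beta^{qk}$, which lie in $\bF_q$ exactly when $q+1\mid k$; hence $A_1^k$ has no fixed point on $\bP^1(\bF_q)$ for $1\le k\le q$, which forces every $\langle A_1\rangle$-orbit to have size $q+1$.)

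I do not expect a genuine obstacle here: the whole content is the identification of the cycle type of each map, and the only steps needing care are the elementary order computations in $\bF_{q^2}^*$ — that $\beta^{q+1}$ has order exactly $q-1$ and that $\beta$ has order exactly $q+1$ modulo $\bF_q^*$ — together with the remark that an odd-length cycle is an even permutation. The hypothesis $q=2^m$ enters only at the very end, through the parities of $q\pm1$ (and implicitly in simplifying $r$ over characteristic $2$), which is consistent with the failure of the statement for $q=2$ noted just before the lemma.
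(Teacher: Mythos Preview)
Your proof is correct, and for $B_1$ it coincides with the paper's argument. For $A_1$, however, you take a genuinely different route. The paper factors
\[
A_1=\begin{pmatrix}r&0\\0&1\end{pmatrix}\begin{pmatrix}1&0\\s&1\end{pmatrix}\begin{pmatrix}0&1\\1&0\end{pmatrix}
\]
and checks that each factor induces an even permutation (a $(q-1)$-cycle, a product of $q/2$ transpositions, and an involution with one fixed point, respectively). Your argument instead identifies $A_1$ with a generator of the non-split torus $\bF_{q^2}^*/\bF_q^*$ acting simply transitively on $\bP^1(\bF_q)$, so that $A_1$ is a single $(q+1)$-cycle. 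This is more conceptual and yields the exact cycle type, not just the parity; the paper's approach is more hands-on but avoids the eigenvalue discussion over $\bF_{q^2}$. One small remark: the case $q=2$ is excluded not because your argument breaks (a $3$-cycle is still even), but because the matrices $A_1,B_1$ in the paper are only defined for $q>2$; the odd permutation over $\bF_2$ comes from a different generator.
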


\begin{proof}
The element $\alpha$ is a generator for $\bF_q^*\cong\bZ/(q-1)\bZ$,
so $B_1$ fixes $[1:0]$ and $[0:1]$ and acts as a $(q-1)$-cycle on
$
    \bP^1(\bF_q)\setminus\{[1:0]\cup[0:1]\}\cong\bF_q^*,
$
which is even for all $q = 2^m\geq2$.
On the other hand, $A_1$ can be factorized as
$$\begin{pmatrix}
    0 & r \\
    1 & s
\end{pmatrix}
=
\begin{pmatrix}
    r & 0 \\
    0 & 1
\end{pmatrix}
\begin{pmatrix}
    1 & 0 \\
    s & 1
\end{pmatrix}
\begin{pmatrix}
    0 & 1 \\
    1 & 0
\end{pmatrix}
\equalscolon A_{11}A_{12}A_{13}.
$$
Among the factors:
\begin{itemize}
    \item $A_{11}$ has the same parity as $B_1$ since $r = -\alpha = \alpha$.
    \item $A_{12}$ fixes $[0:1]$ and acts on $\bP^1(\bF_q)\setminus\{[0:1]\}\cong\bF_q$ as a translation by $s$, which is a composition of $q/2$ transpositions (because $\operatorname{char}(k)=2$) and thus even for $q = 2^m\geq4$.
    \item $A_{13}$ is an involution fixing $[1:1]$, so it is a composition of $q/2$ transpositions which is even for $q = 2^m\geq4$.
\end{itemize}
As a result, $A_1$ acts as a compositions of three even permutations, so $A_1$ is even.
\end{proof}

%Parity of An
\begin{lemma}
\label{lemma:parityAn}
Assume $n\geq 2$. Then $A_n$ induces an even permutation on $\bP^n(\bF_q)$ for $q = 2^m\geq2$.
\end{lemma}
\begin{proof}
One can verify directly that $A_n = T_nM_n$, where
$$
    T_n\colonequals\begin{pmatrix}
    1 & 0 & \cdots & 0\\
    0 & 1 & \cdots & 0\\
    \vdots & \vdots & \ddots & \vdots\\
    1 & 0 & \cdots & 1
    \end{pmatrix},
    \qquad
    M_n\colonequals\begin{pmatrix}
    1 & 0 & \cdots & 0\\
    0 & \alpha & \cdots & 0\\
    \vdots & \vdots & \ddots & \vdots\\
    0 & 0 & \cdots & 1
    \end{pmatrix}.
$$
Note that $M_n$ has an odd order $q-1$, so its action is even. Therefore, it is sufficient to prove that $T_n$ induces an even permutation. First, by writing $T_n = I_{n+1} + E_{n+1,1}$, we obtain
$$
    T_n^2 = I_{n+1} + 2E_{n+1,1} + E_{n+1,1}^2
    = I_{n+1},
$$
so $T_n$ is an involution, thus its action on $\bP^n(\bF_q)$ is a product of disjoint transpositions. Second, $T_n$ acts on the homogeneous coordinates as
$$
    [x_0 : x_1 : ... : x_n]
    \mapsto
    [x_0 : ... : x_{n-1} : x_0+x_{n}],
$$
so its fixed locus is the hyperplane $\{x_0=0\}$. Hence the number of transpositions in $T_n$ equals
$$
    \frac{1}{2}
    \left(
        |\bP^{n}(\bF_q)|- |\bP^{n-1}(\bF_q)|
    \right)
    = \frac{1}{2}
    \left(
        \frac{q^{n+1} - 1}{q-1} - \frac{q^{n} - 1}{q-1}
    \right)
    = \frac{1}{2}
    \left(
        \frac{q^{n+1} - q^n}{q-1}
    \right)
    = \frac{q^{n}}{2},
$$
which is even for $n\geq 2$ and $q=2^m\geq2$.
\end{proof}

%Parity of Bn
\begin{lemma}
\label{lemma:parityBn}
Assume $n\geq 2$. If $q = 2^m\geq4$, then the action of $B_n$ on $\bP^n(\bF_q)$ is even. If $q = 2$, then the action is odd when $n=2^\ell-1$ for some $\ell$ and even otherwise.
\end{lemma}

\begin{proof}
We choose a generator $b\in\Gal(\bF_{q^{n+1}}/\bF_q)\cong\bZ/(n+1)\bZ$ and an element $\theta\in\bF_{q^{n+1}}$ such that
$$
    \{\theta_i\colonequals b^i(\theta) : i = 0,\dots,n\}\subset\bF_{q^{n+1}}
$$
form a normal basis over $\bF_q$.
This identifies the underlying affine space $\bF_q^{n+1}$ of $\bP^n$ as
$$
    \bF_q\theta_0\oplus\bF_q\theta_1\oplus\cdots\oplus\bF_q\theta_n
    \;\cong\;\bF_{q^{n+1}}
$$
where a point $(x_0,\dots,x_n)\in\bF_q^{n+1}$ corresponds to $x_0\theta_0 + \cdots + x_n\theta_n\in\bF_{q^{n+1}}$.
Since $b(\theta_i) = \theta_{i+1}$ for $i = 0,\dots, n-1$ and $b(\theta_n) = \theta_{0}$, we have
$$
    b(x_0\theta_0 + x_1\theta_1 + \cdots + x_n\theta_n)
    = x_n\theta_0 + x_0\theta_1 + \cdots + x_{n-1}\theta_n,
$$
which identifies the multiplication of $B_n$ on $\bF_q^{n+1}$ from the left as the action of $b^{-1}$ on $\bF_{q^{n+1}}$. Therefore, it is sufficient to compute the parity of the action of $b$ on $\bF_{q^{n+1}}$.

Let us write $n+1 = u2^{\ell}$ where $u$ is odd. Then the parity of $b^u$ is the same as the parity of $b$ and the cycle decomposition of $b^u$ contains only $2^r$-cycles for $r\geq 0$. There is a filtration of $\bF_{q^{n+1}}$ invariant under the action of $b^u$:
$$
    \bF_{q^{u}}\subset\cdots
    \subset
    \bF_{q^{u2^{r-1}}}
    \subset
    \bF_{q^{u2^{r}}}
    \subset
    \cdots\subset\bF_{q^{u2^\ell}}
    = \bF_{q^{n+1}}.
$$
For each $1\leq r\leq \ell$, there are $q^{u2^r} - q^{u2^{r-1}}$ many elements in $\bF_{q^{u2^{r}}}\setminus\bF_{q^{u2^{r-1}}}$,
and the $b^u$-orbit of each element has size
$
    [\bF_{q^{u2^{r}}}:\bF_{q^{u}}] = 2^r.
$
Therefore, the number of $2^r$-cycles in $b^u$ equals
$$
    \frac{1}{2^r}|\bF_{q^{u2^{r}}}\setminus\bF_{q^{u2^{r-1}}}|
    = \frac{1}{2^r}(q^{u2^r} - q^{u2^{r-1}}).
$$
On the quotient space $\bP^n(\bF_q) = \bP(\bF_{q^{n+1}})$,
which we consider as the set of $\bF_q$-lines in $\bF_q^{n+1}$ through the origin, the number of $2^r$-cycles for the action of $b^u$ becomes
\begin{equation}
\label{eqn:number_2^r-cycles}
    \frac{1}{2^r}\left(
    \frac{q^{u2^r} - q^{u2^{r-1}}}{q-1}
    \right)
    = \frac{q^{u2^{r-1}}}{2^r}\left(
    \frac{q^{u2^{r-1}}-1}{q-1}
    \right).
\end{equation}
\begin{itemize}
\item Suppose $q = 2^m\geq4$. Then $m\geq 2$, thus $mu2^{r-1}-r>0$ for $u\geq 1$ and $1\leq r\leq \ell$. Hence
\begin{equation}
\label{eqn:a-factor}
    \frac{q^{u2^{r-1}}}{2^r}
    = \frac{2^{mu2^{r-1}}}{2^r}
    = 2^{mu2^{r-1} - r}
\end{equation}
is even. As the fraction $\frac{q^{u2^{r-1}}-1}{q-1}$ is clearly an integer, we conclude that the number of $2^r$-cycles in $b^u$ when acting on $\bP^n(\bF_q)$ is even for all $1\leq r\leq\ell$,
thus the action is even itself.

\item Suppose $q=2$ and $n=2^\ell-1$ for some $\ell$. Note that $\ell\geq2$ as $n\geq 2$. Then $m=1$ and $u=1$. In this case, \eqref{eqn:a-factor} equals $1$ for $r=1$ and is even for $2\leq r\leq\ell$. This implies that \eqref{eqn:number_2^r-cycles} equals $1$ for $r=1$ and is even for $2\leq r\leq\ell$. As a result, the action of $b^u$ on $\bP^n(\bF_q)$ consists of one $2$-cycle and an even number of $2^r$-cycle for each $2\leq r\leq\ell$, thus is an odd action.

\item Suppose $q=2$ and $n\neq 2^\ell-1$ for all $\ell$. Then $m=1$ and $u>1$. This implies that \eqref{eqn:a-factor} is even for $1\leq r\leq\ell$. We conclude that the action of $b^u$ is even as in the first case.
\end{itemize}
These cover all the cases, so the proof is done.
\end{proof}

\begin{prop}
\label{prop:PGLpermute}
For $n\geq 1$ and $q = 2^m\geq4$, the action of $\PGL_{n+1}(\bF_q)$ on $\bP^n(\bF_q)$ is even.
\end{prop}

\begin{proof}
The case $n=1$ (resp. $n\geq 2$) follows from Lemma~\ref{lemma:parityA1B1} (resp. Lemmas~\ref{lemma:parityAn} and \ref{lemma:parityBn}).
\end{proof}

The parity of a permutation is invariant upon raising to an odd power, so we usually assume the order of a permutation to be a power of $2$ when studying the parity. For a permutation induced by a linear transformation, the following result shows that we can say more about the cycle type if its order is a power of $2$.

\begin{cor}
\label{cor:powerTwoCycles}
Let $n\geq 1$ and $q = 2^m\geq4$.
Suppose that $\sigma\in\PGL_{n+1}(\bF_q)$ induces a permutation of order $2^r$ on $\bP^n(\bF_q)$. Define $c_i$, where $0\leq i\leq r$, to be the number of $2^i$-cycles in the cycle decomposition. Then $c_0$ is odd and the sum $c_1 + \dots + c_r$ is even. In the case $n = 1$, there are only two possibilities:
\begin{enumerate}[label=\textup{(\arabic*)}]
    \item $c_0 = q+1$ and $c_i = 0$ for all $1\leq i\leq r$, i.e., $\sigma$ is the identity.
    \item $c_0 = 1$ and $c_i = 0$ for all but one $1\leq i\leq r$. The unique nonzero $c_j$ where $1\leq j\leq r$ equals $q/2^j > 1$.
\end{enumerate}
\end{cor}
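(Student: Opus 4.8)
The plan is to deduce everything from Proposition~\ref{prop:PGLpermute} together with a short counting argument. First I would treat the general case. Suppose $\sigma\in\PGL_{n+1}(k)$ induces a permutation $\pi$ of order $2^r$ on $\bP^n(k)$; write its cycle decomposition as $c_0$ fixed points, $c_1$ transpositions, $\dots$, $c_r$ cycles of length $2^r$. The sign of $\pi$ equals $(-1)^{c_1 + c_2 + \cdots + c_r}$, since a $2^i$-cycle is odd for every $i\geq1$. By Proposition~\ref{prop:PGLpermute} this sign is $+1$, so $c_1 + \cdots + c_r$ is even. For the parity of $c_0$, I would count points: $|\bP^n(k)| = c_0 + 2c_1 + 4c_2 + \cdots + 2^r c_r$, so $c_0 \equiv |\bP^n(k)| \pmod 2$. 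Since $|\bP^n(k)| = 1 + q + \cdots + q^n$ and $q$ is even, this is $\equiv 1\pmod 2$, hence $c_0$ is odd. This settles the first assertion for all $n\geq1$.

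For the case $n=1$ I would use the classification of elements of $\PGL_2(k)$ by their fixed-point behavior on $\bP^1$. A nontrivial element of $\PGL_2(k)$ acting on $\bP^1(\overline{k})$ fixes either one point (unipotent case) or two points (semisimple case). If $\sigma$ has order $2^r$ with $r\geq1$, then in characteristic two a semisimple element of $2$-power order would have to be trivial (the multiplicative group of any finite field extension has odd part carrying all elements whose order is a power of the characteristic only when that order is $1$); more precisely, a diagonalizable element of $\PGL_2$ of order $2^r > 1$ would be conjugate over $\overline{k}$ to $\mathrm{diag}(\mu,1)$ with $\mu$ a primitive $2^r$-th root of unity, impossible in characteristic two. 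So a nontrivial $\sigma$ of $2$-power order must be unipotent, conjugate to $\left(\begin{smallmatrix}1 & * \\ 0 & 1\end{smallmatrix}\right)$, with exactly one fixed point on $\bP^1(\overline k)$, which is $k$-rational. Thus $c_0 = 1$. The remaining $q$ non-fixed points are permuted in cycles all of the same length: the cyclic group generated by $\sigma$ acts on them, and since the action is free (the only fixed point is already accounted for) every orbit has size $2^j$ where $2^j$ is the order of $\sigma$ restricted to this set — but actually all orbits have the full size $|\langle\sigma\rangle|$ acting faithfully, and since $\sigma$ is unipotent its restriction to the $q$ remaining points has orbits all of one common size $2^j$. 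Hence $c_j = q/2^j$ for that single $j$ and $c_i = 0$ otherwise; and $c_j > 1$ because $q/2^j = q/|\langle\sigma\rangle|$ and a unipotent element of $\PGL_2(\bF_q)$ has order dividing $q$ (in fact order $p=2$ when $q$ is a power of $2$, but in any case $2^j \le q/2$, wait — order exactly $2$), so $q/2^j \ge 2 > 1$. Case (1) is simply the trivial permutation, $c_0 = q+1$.

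The main obstacle I expect is the clean justification that all non-fixed-point orbits of a $2$-power-order element of $\PGL_2(\bF_q)$ have a common length. This follows because such an element is unipotent, hence (after conjugation) a translation $x\mapsto x+a$ on the affine line $\bP^1\setminus\{[1:0]\}\cong\bF_q$ with $a\neq0$; a translation of order $2$ (as $\operatorname{char}k = 2$ forces every nonzero translation to have order exactly $2$) decomposes the $q$ affine points into $q/2$ transpositions, so in fact $r=1$, $j=1$, and $c_1 = q/2$. One should note this refinement is consistent with the stated conclusion (the "unique nonzero $c_j$" is $c_1 = q/2$), so strictly only $q/2 > 1$ needs checking, which holds since $q = 2^m \ge 4$. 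Everything else is the elementary point-count and sign computation above, all relying only on Proposition~\ref{prop:PGLpermute} and $\operatorname{char}k = 2$.
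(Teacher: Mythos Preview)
Your treatment of the general case is essentially identical to the paper's: compute the sign from Proposition~\ref{prop:PGLpermute} to force $c_1+\cdots+c_r$ even, then reduce the point count $|\bP^n(k)|=1+q+\cdots+q^n$ modulo~$2$ to force $c_0$ odd. Nothing to add there.

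For $n=1$ you take a genuinely different route. The paper argues purely combinatorially, using twice the fact that a nontrivial element of $\PGL_2$ fixes at most two points of $\bP^1$: first, since $c_0$ is odd and at most $2$, one gets $c_0=1$; second, if $j$ is minimal with $c_j\neq 0$, then $\sigma^{2^j}$ fixes at least $1+2^jc_j\geq 3$ points, hence is the identity, so all nontrivial cycles have length exactly $2^j$; finally $2^j=q$ would make $\sigma$ a single $q$-cycle, contradicting evenness. Your approach instead invokes the Jordan dichotomy: a nontrivial $2$-power-order element of $\PGL_2$ in characteristic two cannot be semisimple (no nontrivial $2$-power roots of unity in $\overline{k}^{\,*}$), hence is unipotent and, after a $k$-rational conjugation, a translation $x\mapsto x+a$ on $\bA^1$; in characteristic two this has order exactly $2$, giving $c_0=1$, $c_1=q/2$, and $c_i=0$ for $i\geq 2$. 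Your argument is correct and in fact proves the sharper statement that necessarily $r=1$ and $j=1$, which the paper's phrasing leaves open. The paper's argument, on the other hand, is more self-contained (it never leaves the permutation side) and avoids the small implicit step you skate over, namely that the order of $\sigma$ in $\PGL_2(k)$ equals the order of the induced permutation on $\bP^1(k)$; this holds because $\PGL_2(k)$ acts faithfully on the $q+1\geq 3$ rational points. Your middle paragraph about free actions is muddled and unnecessary once you have the translation model; the clean version is exactly what you wrote at the end.
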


\begin{proof}
Because a $2^i$-cycle is odd for all $i\geq 1$, the sum $c_1+\dots+c_r$ must be even due to Proposition~\ref{prop:PGLpermute}.
Then the relations
$$
    |\bP^n(\bF_q)| = q^n + \dots + q + 1 = c_0 + 2c_1 + \dots + 2^rc_r
$$
imply that $c_0$ is odd. Assume $n = 1$ and that $\sigma$ is not the identity. Then $\sigma$ fixes at most $2$ points, which implies that $c_0 = 1$. Let $1\leq j\leq r$ be the smallest integer such that $c_j\neq 0$. Then $\sigma^{2^j}$ becomes the identity as it fixes $1 + 2^jc_j\geq 3$ points. It follows that every nontrivial cycle in $\sigma$ has the same size $2^j$. If $2^j = q$, then $\sigma$ is a $q$-cycle thus is odd, which is impossible by Proposition~\ref{prop:PGLpermute}. Hence $2^j < q$, and the equalities
$
    |\bP^1(\bF_q)| = q + 1 = 1 + 2^jc_j
$
imply that $c_j = q/2^j > 1$.
\end{proof}

%----------Projective bundles over finite sets
\subsection{Projective bundles over finite sets}
\label{subsect:finiteProjBundle}

We define a \emph{$\bP^n$-bundle over a finite set $B$} to be the disjoint union of projective $n$-spaces:
$$
    \cP = \coprod_{i\in B} P_i,
    \quad P_i\cong\bP^n,
    \quad\text{equipped with the map}\quad
    h\colon\cP\to B:P_i\mapsto i.
$$
Consider the set $\cP(k)$ of $k$-points on $\cP$. We are interested in elements $\sigma\in\Sym(\cP(k))$ of the form:
\begin{enumerate}[label=(\arabic*)]
    \item\label{proj_fiberToFiber}
    For every $i\in B$, there exists $j\in B$ such that $\sigma(P_i(k)) = P_j(k)$. Then $h\sigma h^{-1}$ is well-defined as an element of $\Sym(B)$.
    \item\label{proj_inducedByIsom}
    Each bijection $\sigma\colon P_i(k)\to P_j(k)$ is induced by a linear isomorphism over $k$.
\end{enumerate}
Note that such elements form a subgroup of $\Sym(\cP(k))$.

\begin{lemma}
\label{lemma:autoProjBundle}
Let $k=\bF_q$, $q = 2^m\geq 4$,
and $\sigma\in\Sym(\cP(k))$ be an element satisfying \ref{proj_fiberToFiber} and \ref{proj_inducedByIsom}.
Then $\sigma$ and $\sigma_B\colonequals h\sigma h^{-1}\in\Sym(B)$ have the same parity.
\end{lemma}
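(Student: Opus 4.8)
The plan is to reduce the parity of $\sigma$ to the parity of $\sigma_B$ by factoring $\sigma$ as a product of two kinds of permutations whose parities we understand: those that permute the fibers according to $\sigma_B$ while acting ``as trivially as possible'' on each fiber, and those that fix every fiber setwise and act on it by a projective transformation. First I would fix, for each pair $i,j\in B$ with $j=\sigma_B(i)$, a ``reference'' identification $P_i\cong\bP^n\cong P_j$ over $k$ (say by choosing coordinates on every $P_i$ once and for all), and let $\tau\in\Sym(\cP(k))$ be the permutation that sends $P_i(k)$ to $P_j(k)$ via this reference identification. Then $\sigma\tau^{-1}$ fixes each fiber $P_i(k)$ setwise, and on $P_i(k)$ it acts by a projective transformation $g_i\in\PGL_{n+1}(k)$ (the composite of $\sigma|_{P_i}$ with the inverse reference identification). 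Thus $\sigma = \left(\prod_{i\in B} g_i\right)\cdot\tau$, a product of commuting fiberwise pieces times $\tau$.

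The fiberwise piece $\prod_i g_i$ is even: each $g_i$ induces an even permutation of $P_i(k)$ by Proposition~\ref{prop:PGLpermute} (here is where $q=2^m\geq4$ enters), and a product of permutations supported on disjoint sets has parity equal to the sum of the parities, which is even. So it remains to compute the parity of $\tau$. Since $\tau$ acts on $\coprod_i P_i(k)$ by permuting the $|B|$ blocks, each of size $N\colonequals|\bP^n(k)| = q^n+\cdots+q+1$, according to $\sigma_B$, its cycle structure is determined by that of $\sigma_B$: a cycle of $\sigma_B$ of length $\ell$ lifts, for each of the $N$ ``slots'' inside a fiber, to a cycle of $\tau$ of length $\ell$. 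Hence a single $\ell$-cycle of $\sigma_B$ contributes $N$ cycles of length $\ell$ to $\tau$, i.e. it contributes $N(\ell-1)$ to the number of transpositions mod $2$. Since $N = q^n + \cdots + q + 1$ is \emph{odd} (it is a sum of $n+1$ powers of $q$, and $q$ is even, so $N\equiv 1\pmod 2$), we get $N(\ell-1)\equiv \ell-1\pmod 2$, so each cycle of $\sigma_B$ contributes the same parity to $\tau$ as it does to $\sigma_B$ itself. Summing over all cycles, $\tau$ and $\sigma_B$ have the same parity.

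Combining the two computations, $\sigma = \left(\prod_i g_i\right)\tau$ has the parity of $\tau$, which equals the parity of $\sigma_B$, as claimed. The only genuine input is Proposition~\ref{prop:PGLpermute} (which forces the fiberwise contribution to vanish) together with the elementary parity bookkeeping; the mild subtlety to get right is the blockwise cycle count for $\tau$ and the observation that $|\bP^n(\bF_q)|$ is odd in characteristic two, which is what makes the ``permutation of blocks'' transparently parity-preserving. I do not expect a serious obstacle here; the main care needed is simply to set up the factorization $\sigma=(\prod_i g_i)\tau$ cleanly so that the $g_i$ are honest elements of $\PGL_{n+1}(k)$ and the pieces manifestly commute.
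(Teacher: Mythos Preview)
Your proof is correct and takes a genuinely different route from the paper's. The paper reduces to a single $\sigma_B$-orbit of size $r=2^\ell$, then studies the cycle decomposition of $\sigma$ by looking at $\sigma^r$ restricted to one fiber and invoking Corollary~\ref{cor:powerTwoCycles} (that for a $\PGL$-element of $2$-power order the number of fixed points is odd and the number of nontrivial cycles is even). Your argument instead factors $\sigma$ globally as a product of fiberwise $\PGL$-elements times a pure block permutation $\tau$, and computes the parity of $\tau$ directly from the observation that $|\bP^n(\bF_q)|$ is odd when $q$ is even. Both approaches ultimately rest on Proposition~\ref{prop:PGLpermute}, but yours is more elementary: it bypasses Corollary~\ref{cor:powerTwoCycles} entirely and makes the dependence on characteristic two visible in the single parity fact $N\equiv 1\pmod 2$. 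The paper's route, on the other hand, reuses the cycle-type refinement already proved and keeps the argument in the same language as the rest of \S\ref{subsect:linearTransform}.

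One small remark: the phrase ``commuting fiberwise pieces'' is fine for the $g_i$ among themselves (disjoint supports), but $\prod_i g_i$ need not commute with $\tau$. This does not matter, since $\mathrm{sgn}$ is a homomorphism, but you should drop the suggestion that it does.
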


\begin{proof}
The parity of a permutation is invariant upon raising it to an odd power,
so we can assume that both $\sigma$ and $\sigma_B$ consist of disjoint cycles of sizes powers of $2$. Suppose that
$$
    O\colonequals\{p_1, ..., p_{r}\}\subset B,\quad r=2^\ell\geq1,
$$
is one of the orbits of $\sigma_B$. Then the set of $k$-points in $h^{-1}(O)\subset\cP$ is invariant under $\sigma$. Therefore, it suffices to prove the statement under the hypothesis $O = B$. Note that the case $r = 1$ follows immediately from Proposition~\ref{prop:PGLpermute}. Hence we assume that $r\geq 2$, in which case $\sigma_{B}$ is odd, and so our goal is to prove that $\sigma$ is also odd.

Fix an element $p\in O$. The assumption $O = B$ implies $\sigma_B^r = \mathrm{id}$, so $\sigma^r$ acts on the $k$-points of $h^{-1}(p)\cong\bP^n$. Denote this action as $\sigma^r_p$. Observe that, in the cycle decompositions, a $u$-cycle in $\sigma^r_p$ contributes a $(ur)$-cycle in $\sigma$, and every cycle in $\sigma$ is obtained this way. Assume that $\sigma^r_p$ consists of $c_i$ many $2^i$-cycles where $i\geq0$. Then $\sigma$ consists of $c_i$ many $(2^ir)$-cycles, which are all odd since the assumption $r\geq 2$ implies $2^i r\geq 2$. By Corollary~\ref{cor:powerTwoCycles} applied to $\sigma^{r}_{p}$, the sum $\sum_{i\geq0}c_i$, which also equals the number of cycles in $\sigma$, is an odd integer. We conclude that $\sigma$ is odd.
\end{proof}

%----------Proof of the birational invariance of parity
\subsection{Proof of the birational invariance of parity}
\label{subsect:proof-birinvparity}

Let $X$ and $Y$ be smooth surfaces over $k = \bF_{2^m}$ where $m\geq 2$. Given $\alpha\in\BBir_k(X)$, $\beta\in\BBir_k(Y)$, and a birational map $h\colon X\to Y$ over $k$ that satisfy $\alpha = h^{-1}\beta h$, we prove Theorem~\ref{thm:parityBirMapIntro}, namely, that the permutations induced by $\alpha$ and $\beta$ on $X(k)$ and $Y(k)$, respectively, have the same parity. Note that, if $h$ induces a bijection between $X(k)$ and $Y(k)$, then the relation $\alpha = h^{-1}\beta h$ implies immediately that the induced permutations have the same cycle type and thus the same parity. The main content of Theorem~\ref{thm:parityBirMapIntro} consists in that the same conclusion holds even if $h$ is not a bijection on the sets of rational points.

In the following, we establish Theorem~\ref{thm:parityBirMapIntro} from scratch, starting from the case when the birational map $h\colon X\to Y$ is a blow-up at a set of closed points, then the case when $h$ is a birational morphism, and finally the full generality.

\begin{lemma}
\label{lem:sigma_B}
Let $Y$ be a smooth surface over $k=\bF_{2^m}$, $m\geq 2$, and $h\colon X\to Y$ be a birational morphism over $k$ that blows up a set $\overline{B}\subset Y(\overline{k})$ of closed points. Define $B\colonequals\overline{B}\cap Y(k)$ and $E\colonequals h^{-1}(B)\subset X$. Pick $\beta\in\BBir_k(Y)$ and assume that $\alpha\colonequals h^{-1}\beta h\in\BBir_k(X)$. Then we have $\alpha(E)=E$ and $\beta(B)=B$.
\end{lemma}

\begin{proof}
The map $\alpha$ does not contract any curve in $E$. Indeed, every irreducible component of $E$ is a rational curve over $k$, thus contains more than one $k$-points. If $\alpha$ contracts any of them, we would have $\alpha\notin\BBir_k(X)$, contradiction. It follows that $\alpha(E) = h^{-1}\beta h(E) = h^{-1}\beta(B)$ is a curve, so $\beta(B)\subset B$. Since $\beta$ induces a bijection on $Y(k)$, we have $\beta(B)=B$, and hence $\alpha(E)=E$.
\end{proof}

%Case 1
\begin{lemma}
\label{lem:parityBirMap_case1}
Retain the setting from Lemma~\ref{lem:sigma_B}. Then the actions of $\alpha$ on $X(k)$ and $\beta$ on $Y(k)$ have the same parity.
\end{lemma}

\begin{proof}
Let $U\colonequals X\setminus E$ and $V\colonequals Y\setminus B$. Note that $h|_U\colon U\to V$, though may not be an isomorphism, induces a bijection on the sets of $k$-points. By Lemma~\ref{lem:sigma_B}, we have $\alpha(U)=U$ and $\beta(V)=V$, and the relation $\alpha = h^{-1}\beta h$ implies $\alpha|_U = (h|_U)^{-1}(\beta|_V) (h|_U)$. Hence the restrictions of $\alpha$ to $U$ and $\beta$ to $V$ have the same parity when acting on the $k$-points.

Now consider the actions of $\alpha$ on $E$ and $\beta$ on $B$. Note that $E$ is a $\bP^1$-bundle over $B$. Restricting $h$ to $E(k)$ induces the map among finite sets
$$\xymatrix{
    E(k)\cong\bP^1(k)\times B(k)\ar[rr]^-{h|_{E(k)}} && B(k),
}$$
as well as the relation
$
    \beta|_{B(k)}
    = h|_{E(k)}\circ
    \alpha|_{E(k)}\circ
    (h|_{E(k)})^{-1}.
$
Then the permutations $\beta|_{B(k)}$ and $\alpha|_{E(k)}$ have the same parity by Lemma~\ref{lemma:autoProjBundle}. This completes the proof.
\end{proof}

The following two lemmas will be needed in the proofs of the remaining cases.

\begin{lemma}
\label{lem:blowup-at-except}
Let $X$ and $Y$ be smooth surfaces over a perfect field $k$ and $h\colon X\rightarrow Y$ a birational morphism over $k$. Then we can factorize $h$ as a sequence of blow-ups at closed points
$$\xymatrix{
    h\colon X = Y_r\ar[r]^-{\epsilon_r}
    & Y_{r-1}\ar[r]^-{\epsilon_{r-1}}
    & \cdots\ar[r]^-{\epsilon_2}
    & Y_1\ar[r]^-{\epsilon_1}
    & Y.
}$$
Moreover, this sequence can be arranged in the way that the points in $Y_i$ blown up by $\epsilon_{i+1}$ lie in the exceptional locus of $\epsilon_i$.
\end{lemma}

\begin{proof}
According to \cite{Man86}*{Lemma~18.1.3}, we can factorize $h$ as a sequence of blow-ups at closed points. To prove the second statement, assume that there exists a point $x\in Y_i$ blown up by $\epsilon_{i+1}$ but not in the exceptional locus of $\epsilon_i$. Consider the commutative diagram
$$\xymatrix{
    Y_{i+1}'\ar[r]^-{\epsilon_{i+1}'}\ar[d]^-\sim
    & Y_{i}'\ar[r]^-{\epsilon_{i}'}\ar[d]^-{\mathrm{Bl}_x}
    & Y_{i-1}\ar@{=}[d]\\
    Y_{i+1}\ar[r]^-{\epsilon_{i+1}}
    & Y_{i}\ar[r]^-{\epsilon_{i}}
    & Y_{i-1}
}$$
where $\epsilon_{i}'$ is $\epsilon_{i}$ followed by the blow-up at $x$, and $\epsilon_{i+1}'$ blows up the same points as $\epsilon_{i+1}$ except for $x$. Then $Y_{i+1}'$ and $Y_{i+1}$ are canonically isomorphic and we can replace $\epsilon_{i}\epsilon_{i+1}$ by $\epsilon_{i}'\epsilon_{i+1}'$. Repeating this process from $i = r-1$ to $i = 1$ gives us the desired sequence. 
\end{proof}

\begin{lemma}
\label{lem:regular-on-except}
Let $Y$ be a surface, $\beta$ be a birational self-map on $Y$, and $q\in Y$ be a closed point at which $\beta$ is well-defined. Let $\epsilon\colon Y'\to Y$ be the blow-up at the set $\{q,\beta(q)\}$, and $E_q$ be the exceptional divisor over $q$. Then the composition $\epsilon^{-1}\beta\epsilon$, which is a birational self-map on $Y'$, is well-defined everywhere on $E_q$.
\end{lemma}

\begin{proof}
Let $q'\colonequals\beta(q)$ and $E_{q'}\subset Y'$ be the exceptional divisor over $q'$. Denote $\beta'\colonequals\epsilon^{-1}\beta\epsilon$. Then we have the commutative diagram
$$\xymatrix{
    E_q\ar@{^(->}[r]
    & Y'\ar[r]^-\epsilon\ar@{-->}[d]^-{\beta'}
    & Y\ar@{-->}[d]^-\beta\\
    E_{q'}\ar[r]
    & Y'\ar[r]^-\epsilon
    & Y.
}$$
The composition $\beta\epsilon\colon Y'\dashrightarrow Y$ pulls $q'$ back as the divisor $E_q$ while $q'$ is blown up by $\epsilon$ as $E_{q'}$. By the universal property of blowing up, $\beta\epsilon$ factors through the bottom $\epsilon$ uniquely as
$$\xymatrix{
    E_q\ar@{^(->}[r]\ar[d]_-{\beta_i''|_{E_q}}^-\sim
    & Y'\ar[r]^-\epsilon\ar@{-->}[d]^-{\exists\;\beta''}
    & Y\ar@{-->}[d]^\beta\\
    E_{q'}\ar[r]
    & Y'\ar[r]^-\epsilon
    & Y.
}$$
Note that $\beta''$ is well-defined everywhere on $E_q$ and $\beta'' = \epsilon^{-1}\beta\epsilon = \beta'$. Hence $\beta'$ is well-defined everywhere on $E_q$.
\end{proof}

%Case 2
Now we prove the invariance of parity under conjugations by birational morphisms.

\begin{lemma}
\label{lem:parityBirMap_case2}
Let $X$ and $Y$ be smooth surfaces over $k=\bF_{2^m}$, $m\geq 2$, and $h\colon X\rightarrow Y$ a birational morphism over $k$. Pick $\beta\in\BBir_k(Y)$ and assume that $\alpha\colonequals h^{-1}\beta h\in\BBir_k(X)$. Then the actions of $\alpha$ on $X(k)$ and $\beta$ on $Y(k)$ have the same parity.
\end{lemma}

\begin{proof}
By Lemma~\ref{lem:blowup-at-except}, we can factorize $h$ as
$$\xymatrix{
    h\colon X = Y_r\ar[r]^-{\epsilon_r}
    & Y_{r-1}\ar[r]^-{\epsilon_{r-1}}
    & \cdots\ar[r]^-{\epsilon_2}
    & Y_1\ar[r]^-{\epsilon_1}
    & Y
}$$
such that the points in $Y_i$ blown up by $\epsilon_{i+1}$ lie in the exceptional locus of $\epsilon_i$. Denote $\beta_0\colonequals\beta$ and define inductively that
\begin{equation}
\label{eqn:inductiveConjugate}
    \beta_i\colonequals
    \epsilon_i^{-1}\beta_{i-1}\epsilon_i
    \in\Bir_k(Y_i),
    \quad i=1,\dots,r.
\end{equation}
Note that $\beta_r = \alpha$. Let us prove that every $\beta_i\in\BBir_k(Y_i)$ by induction. The case $i=0$ follows by definition. Suppose that $\beta_{i-1}\in\BBir_k(Y_{i-1})$ and, to the contrary, that $\beta_i\notin\BBir_k(Y_i)$. Let $p\in Y_i(k)$ be a base-point of $\beta_i$. Consider the two points
$$
    q\colonequals\epsilon_i(p)\in Y_{i-1}(k),
    \qquad
    q'\colonequals\beta_{i-1}(q)=\beta_{i-1}\epsilon_i(p)\in Y_{i-1}(k).
$$
There are three possible situations:
\begin{enumerate}[label=(\arabic*)]
\item $q'$ is not blown up by $\epsilon_{i}$. This implies that $\beta_i$ is well-defined at $p$ due to (\ref{eqn:inductiveConjugate}), which contradicts our assumption.
    
\item $q'$ is blown up by $\epsilon_{i}$ while $q$ is not. Let $E_{q'}\subset Y_i$ denote the exceptional divisor over $q'$. In this case, $p$ does not lie in the exceptional locus of $\epsilon_i$, so it is mapped bijectively to a point $\widetilde{p}\in X(k)$ via $(\epsilon_{i+1}\cdots\epsilon_r)^{-1}$. Relations~(\ref{eqn:inductiveConjugate}) imply that $\alpha^{-1}$ contracts the proper transform of $E_{q'}$ to $\widetilde{p}$, so $\widetilde{p}$ is a base-point of $\alpha$, which contradicts the fact that $\alpha\in\BBir_k(X)$.
    
\item $q'$ and $q$ are both blown up by $\epsilon_{i}$. By Lemma~\ref{lem:regular-on-except}, the map $\beta_i$ is well-defined everywhere on the exceptional divisor $E_q\subset Y_i$ over $q$. Since $p\in\epsilon_i^{-1}(q)=E_q$, we conclude that $\beta_i$ is well-defined at $p$, contradiction.
\end{enumerate}
Since we get contradictions in all possible cases, we conclude that $\beta_i\in\BBir_k(Y_i)$, hence the claim is fulfilled by induction.
By Lemma~\ref{lem:parityBirMap_case1}, the permutations induced by $\beta_{i}$ for all $i$, including $\alpha$ and $\beta$, have the same parity.
\end{proof}

Before entering the proof of the general case, let us introduce a method about resolving a birational self-map as a birational permutation. Let $X'$ be a smooth surface over a finite field $k$ and $\epsilon\colon X\to X'$ be a birational morphism over $k$ that blows up a set $C\subset X'$ of closed points with exceptional locus $E\subset X$. Pick $\alpha'\in\BBir_k(X')$ and define $\alpha\colonequals\epsilon^{-1}\alpha'\epsilon$. Note that $\alpha$ belongs to $\Bir_k(X)$ but may not belong to $\BBir_k(X)$ in general.

\begin{lemma}
\label{lem:bijectivization}
Retain the notation above. Let $O_1,\dots,O_n\subset X'(k)$ be the orbits of $\alpha'$ that meet the center $C$ nontrivially. Note that the preimages of $O_j\setminus C$ in $X$ make up the subset
$$
    B\colonequals
    \bigcup_{j=1}^n\epsilon^{-1}(O_j\setminus C)
    \;\subset\; X(k)\setminus E.
$$
Consider the blow-up $\eta\colon Z\colonequals\Bl_BX\longrightarrow X$. Then the composition $\eta^{-1}\alpha\eta$ belongs to $\BBir_k(Z)$.
\end{lemma}

\begin{proof}
Let $O\subset X'(k)$ be any of the orbits of $\alpha'$. Note that, if $O\cap C=\emptyset$, then $\alpha$ is well-defined on the subset $(\epsilon^{-1}(O))(k)\subset X(k)$. Assume $O\cap C\neq\emptyset$. Then there are two possibilities:
\begin{enumerate}[label=(\alph*)]
\item If $O\subset C$, then one can show that $\alpha$ is well-defined on $(\epsilon^{-1}(O))(k)\subset X(k)$ by applying Lemma~\ref{lem:regular-on-except} possibly a multiple of times.

\item If $O\not\subset C$, then there exists $q\in O\setminus C$ such that $\alpha'(q)\in C$ and
$$
    O\setminus C
    = \{q, \alpha'^{-1}(q),\dots,\alpha'^{-\ell}(q)\}
    \quad\text{for some}\quad
    \ell\geq 0.
$$
Note that $O\setminus C$ is a finite set as we are working over a finite field. In this case, $\alpha$ is undefined at $\epsilon_1^{-1}(q)$. Blowing up $\epsilon_1^{-1}(q)$ will resolve this indeterminacy by Lemma~\ref{lem:regular-on-except}, though this will create a new base-point at $\epsilon^{-1}(\alpha'^{-1}(q))$. By blowing up this point and then $\epsilon^{-1}(\alpha'^{-2}(q)),\dots,\epsilon^{-1}(\alpha'^{-\ell}(q))$ subsequently, the base-points in $\epsilon^{-1}O$ will all be resolved.
\end{enumerate}
By applying the above to $O_1,\dots,O_n$, we conclude that $\eta^{-1}\alpha\eta\in\BBir_k(Z)$.
\end{proof}

%Case 3
\begin{proof}[Proof of Theorem~\ref{thm:parityBirMapIntro}]
We can eliminate the indeterminacy locus of $h$ by a sequence of blow-ups at closed points \cite{Kol07}*{Corollary~1.76}
$$\xymatrix{
    X_r\ar[drrrr]_-{\widetilde{h}}\ar[r]^-{\epsilon_r}
    & X_{r-1}\ar[r]^{\epsilon_{r-1}}
    & \cdots\ar[r]^-{\epsilon_2}
    & X_1\ar[r]^-{\epsilon_1}
    & X_0 = X\ar@<-12pt>@{-->}[d]^-{h}\\
    &&&& Y\qquad
}$$
For each $\epsilon_i$ where $1\leq i\leq r$, let $E_i\subset X_i$ be its exceptional locus and
$
    C_{i-1}\colonequals\epsilon_i(E_i)\subset X_{i-1}
$
be its center. We also define $C_r\colonequals\emptyset$. By Lemma~\ref{lem:blowup-at-except}, we can assume $C_i\subset E_i$ for $i = 1,\dots,r-1$. Let $\alpha_0\colonequals\alpha$ and define inductively that
\begin{equation}
\label{eqn:inductiveConjugate_case3}
    \alpha_i\colonequals
    \epsilon_i^{-1}\alpha_{i-1}\epsilon_i
    \in\Bir_k(X_i),
    \quad i=1,\dots,r.
\end{equation}

Let us prove by induction on $i$ that there exists a birational morphism
\begin{equation}
\label{eqn:regularization}
    \eta_i\colon Z_i\longrightarrow X_i
    \quad\text{such that}\quad
    \begin{cases}
        \tau_i\colonequals\eta_i^{-1}\alpha_i\eta_i\in\BBir_k(Z_i)\\
        \text{its center }B_i\subset X_i\text{ is disjoint from }C_i.
    \end{cases}
\end{equation}
For the initial case $i = 1$, consider the action of $\alpha_0$ on $X(k)$ and let $O_1,\dots,O_n\subset X(k)$ be the orbits that meet $C_0$ nontrivially. Define
$$
    B_1\colonequals
    \bigcup_{j=1}^n\epsilon_1^{-1}(O_j\setminus C_0)
    \;\subset\; X_1(k)\setminus E_1,
$$
and consider the blow-up
$$
    \eta_1\colon Z_1\colonequals\Bl_{B_1}X_1\longrightarrow X_1.
$$
Then $\tau_1\colonequals\eta_1^{-1}\alpha_1\eta_1\in\BBir_k(Z_1)$ by Lemma~\ref{lem:bijectivization}. Moreover, $B_1$ is disjoint from $E_1$ by construction. As $C_1\subset E_1$, we conclude that $B_1\cap C_1 = \emptyset$. This completes the initial step.

Assume that there exists an $\eta_i$ as in (\ref{eqn:regularization}) for some $1\leq i\leq r-1$. Consider the fiber diagram
$$\xymatrix{
    & X_{i+1}'\colonequals X_{i+1}\times_{X_{i}}Z_{i}\ar[d]_-{\pi_1}\ar[r]^-{\pi_2} & Z_i\ar[d]^-{\eta_i} &\\
    \cdots\ar[r] & X_{i+1}\ar[r]^-{\epsilon_{i+1}} & X_i\ar[r]^-{\epsilon_i} & \cdots
}$$
where $\pi_1$ and $\pi_2$ are the projections to the components of $X_{i+1}'$. Note that $X_{i+1}'$ is the blow-up of $X_i$ at the disjoint union $B_i\cup C_i$, and we can identify $\pi_2$ as the blow-up
$$
    \pi_2\colon X_{i+1}'\cong\Bl_{\eta_i^{-1}C_i}Z_i\longrightarrow Z_i.
$$
By hypothesis, we have $\tau_i = \eta_i^{-1}\alpha_i\eta_i\in\BBir_k(Z_i)$,
which can be lifted to $X_{i+1}'$ as
$$
    \alpha_{i+1}'\colonequals\pi_2^{-1}\tau_i\pi_2\in\Bir_k(X_{i+1}').
$$
By tracking the fiber diagram above, we obtain
\begin{equation}
\label{eqn:sigmaSigma'}
    \alpha_{i+1}'
    = \pi_2^{-1}\tau_i\pi_2
    = \pi_2^{-1}\eta_i^{-1}
    \alpha_i
    \eta_i\pi_2\\
    = \pi_1^{-1}\epsilon_{i+1}^{-1}
    \alpha_i
    \epsilon_{i+1}\pi_1
    = \pi_1^{-1}\alpha_{i+1}\pi_1.
\end{equation}
Let $O_1,\dots,O_n\subset Z_i(k)$ be the orbits of the action of $\tau_i$ that satisfy $O_j\cap\eta_i^{-1}C_i\neq\emptyset$. Define
$$
    B_{i+1}'\colonequals
    \bigcup_{j=1}^n\pi_2^{-1}(O_j\setminus\eta_i^{-1}C_i)
    \;\subset\; X_{i+1}'(k)
$$
and consider the blow-up
$$
    \eta_{i+1}'\colon Z_{i+1}\colonequals\Bl_{B_{i+1}'}X_{i+1}'\longrightarrow X_{i+1}'.
$$
Then
$
    \tau_{i+1}\colonequals
    \eta_{i+1}'^{-1}
    \alpha_{i+1}'
    \eta_{i+1}'
    \in\BBir_k(Z_{i+1})
$
by Lemma~\ref{lem:bijectivization}. Define
$$
    \eta_{i+1}\colonequals\pi_1\eta_{i+1}'
    \colon Z_{i+1}\longrightarrow X_{i+1}.
$$
Using (\ref{eqn:sigmaSigma'}), we obtain
$$
    \tau_{i+1}
    = \eta_{i+1}'^{-1}\alpha_{i+1}'\eta_{i+1}'
    = \eta_{i+1}'^{-1}\pi_1^{-1}\alpha_{i+1}\pi_1\eta_{i+1}'
    = \eta_{i+1}^{-1}\alpha_{i+1}\eta_{i+1}.
$$
Hence $\eta_{i+1}$ satisfies the first requirement in (\ref{eqn:regularization}). For the second requirement, recall that $\eta_{i+1}$ is constructed by subsequently blowing up $\epsilon_{i+1}^{-1}B_i\subset X_{i+1}$ and $B_{i+1}'\subset X_{i+1}'$. The set $\epsilon_{i+1}^{-1}B_i$ is disjoint from $C_{i+1}$ because $C_{i+1}\subset E_{i+1}$ and $B_i\cap C_i=\emptyset$. On the other hand, the image
$$
    \eta_i\pi_2(B_{i+1}') = \epsilon_{i+1}\pi_1(B_{i+1}')\subset X_i
$$
is disjoint from $C_i$, so $\pi_1(B_{i+1}')$ is disjoint from $E_{i+1}$ and thus from $C_{i+1}$. We conclude that the center $B_{i+1}$ of $\eta_{i+1}$ is disjoint from $C_{i+1}$. This completes the inductive step.

Formula~(\ref{eqn:regularization}) with $i=r$ gives a birational morphism
$$
    \eta_r\colon Z_r\longrightarrow X_r
    \quad\text{ such that }\quad
    \tau_r\colonequals\eta_r^{-1}\alpha_r\eta_r\in\BBir_k(Z_r).
$$
As a result, we obtain the commutative diagram
\[\xymatrix{
    & Z_r\ar[dl]_-f\ar[dr]^-g &\\
    X\ar@{-->}[rr]^-h && Y
}\]
where $f = \epsilon_1\cdots\epsilon_r\eta_r$ and $g = \widetilde{h}\eta_r$ are birational morphisms. Moreover,
$$
    \gamma
    \colonequals f^{-1}\alpha f
    = (\epsilon_1\cdots\epsilon_r\eta_r)^{-1}
    \alpha_0
    (\epsilon_1\cdots\epsilon_r\eta_r)
    = \eta_r^{-1}\epsilon_r^{-1}\cdots\epsilon_1^{-1}
    \alpha_0
    \epsilon_1\cdots\epsilon_r\eta_r
    = \eta_r^{-1}\alpha_r\eta_r
    = \tau_r,
$$
which belongs to $\BBir_k(Z_r)$.
Using the relations $h = gf^{-1}$ and $\beta = h\alpha h^{-1}$, we deduce that
$$
    \gamma
    = f^{-1}\alpha f
    = g^{-1}h\alpha h^{-1}g
    = g^{-1}\beta g.
$$
By Lemma~\ref{lem:parityBirMap_case2}, the actions of $\alpha$ and $\beta$ on the sets of $k$-points induce the same parity as the action of $\gamma$, which completes the proof.
\end{proof}

%--------------------Birational permutations on rational surfaces
\section{Birational permutations on rational surfaces}
\label{sect:birratsurf}

We prove Theorem~\ref{thm:evenPermutationsIntro} in this section. Using Theorem~\ref{thm:parityBirMapIntro}, this amounts to showing that over $\bF_q$, $q=2^m\geq 4$, permutations induced by the following maps are all even:
\begin{itemize}
    \item Birational permutations on a conic bundle over $\bP^1$ preserving the fiber class.
    \item Automorphisms of a rational del Pezzo surface.
    \item Elements of $\BCr_2(\bF_q)$ of finite order.
\end{itemize}
One may wonder if there exists a surface over $\bF_q$, $q = 2^m\geq 4$, that admits a birational odd permutation. Below we exhibit such an example over $\bF_4$.

\begin{eg}
Let us write $\bF_4 = \bF_2(\xi)$, where $\xi^2 + \xi + 1 = 0$, and let $\overline{\xi}$ denote the Galois conjugate of $\xi$. Consider the elliptic curve defined by the Weierstrass equation
$$
    E: y^2 + xy = x^3 + 1.
$$
Then $j(E) = 1$, and the group $\Aut(E)\cong\bZ/2\bZ$ is generated by $\sigma_E\colon(x,y)\mapsto(x,y+x)$ \cite{Sil09}*{Propositions~A.1.1 \& A.1.2}. One can verify straightforwardly that
$$
    E(\bF_2) = \{
        (1,0), (0,1), (1,1), p_\infty
    \},
    \qquad
    E(\bF_4) = E(\bF_2)\cup\{
        (\xi, 0),
        (\overline{\xi}, 0),
        (\xi, \xi),
        (\overline{\xi}, \overline{\xi})
    \}
$$
where $p_\infty$ denotes the point at infinity. Moreover, the involution $\sigma_E$ fixes $(0,1)$, $p_\infty$, and exchanges points in each of the pairs $\{(1,0), (1,1)\}$, $\{(\xi,0), (\xi,\xi)\}$, $\{(\overline{\xi},0), (\overline{\xi},\overline{\xi})\}$. In particular, $\sigma_E$ acts on $E(\bF_4)$ as a product of three transpositions and thus is odd. Now consider the $\bP^1$-bundle
$$
    X\colonequals\bP^1\times E
    \longrightarrow E
$$
and define $\sigma_X\in\Aut(X)$ by $\sigma_X(p,q)=(p,\sigma_E(q))$.
Then $\sigma_X$ acts on $X(\bF_4)$ as an odd permutation by Lemma~\ref{lemma:autoProjBundle}. In fact, it is not hard to see that this permutation consists of $5$ disjoint permutations of the same type as $\sigma_E$.
\end{eg}

%----------birational permutations on conic bundles
\subsection{Birational permutations on conic bundles}
\label{subsect:conicBundle}

Over a finite field $k$, a conic $C\subset\bP^2_k$ can only be one of the followings:
\begin{enumerate}[label=(\Roman*)]
\item\label{nondeg} $C$ is smooth, which implies that $C\cong\bP^1_k$.
\item\label{deg-double} $C$ is a double line.
\item\label{deg-conj} $C = \ell\cup\ell'$ where $\ell$ and $\ell'$ are conjugate over the quadratic extension.
\item\label{deg-k} $C = \ell\cup\ell'$ where $\ell$ and $\ell'$ are distinct lines both defined over $k$.
\end{enumerate}
As an analogue of projective bundles over finite sets (see \S\ref{subsect:finiteProjBundle}), given a finite set $B$, we define a \emph{conic bundle over $B$} to be a union of conics indexed by $B$:
$$
    \cC = \bigcup_{i\in B}C_i
    \quad\text{equipped with the map}\quad
    h\colon\cC\to B:C_i\mapsto i.
$$
Consider the set $\cC(k)$ of $k$-points on $\cC$. We are interested in elements $\sigma\in\Sym(\cC(k))$ of the form:
\begin{enumerate}[label=(\arabic*)]
    \item\label{fiberToFiber}
    For every $i\in B$, there exists $j\in B$ such that $\sigma(C_i(k)) = C_j(k)$. Then $h\sigma h^{-1}$ is well-defined as an element of $\Sym(B)$.
    \item\label{inducedByIsom}
    Each bijection $\sigma\colon C_i(k)\to C_j(k)$ is induced by an isomorphism between conics over $k$.
\end{enumerate}
Note that such elements form a subgroup of $\Sym(\cC(k))$.

\begin{lemma}
\label{lemma:keyFact}
Let $k=\bF_q$, $q = 2^m\geq 4$, and $\sigma\in\Sym(\cC(k))$ be an element satisfying \ref{fiberToFiber} and \ref{inducedByIsom}. Then $\sigma$ and $\sigma_B\colonequals h\sigma h^{-1}\in\Sym(B)$ have the same parity.
\end{lemma}

\begin{proof}
Since the parity of a permutation is invariant upon raising it to an odd power, we can assume that both $\sigma$ and $\sigma_B$ consist of disjoint cycles of sizes powers of $2$. In this setting, each nontrivial cycle is an odd permutation. Suppose that
$$
    O\colonequals\{p_1, ..., p_{r}\}\subset B,\quad r=2^s\geq1,
$$
is any orbit of $\sigma_B$. Then $\sigma$ acts on the set of $k$-points on $h^{-1}(O)\subset\cC$, and it suffices to show that this action is odd. This reduces the proof to the case $O = B$.

By property~\ref{inducedByIsom}, the fibers over $O$ are mutually isomorphic and thus of the same type. If they are of type~\ref{nondeg}, then the statement follows from Lemma~\ref{lemma:autoProjBundle}. The case of type~\ref{deg-double} is covered by the previous case by passing to the reduced substructure. If they are of type~\ref{deg-conj}, then the node in each fiber appears as the only $k$-point in that fiber. This implies that $\sigma$ and $\sigma_B$ have the same cycle type, thus are both odd.

Assume that the fibers are of type~\ref{deg-k}, that is, $C_i = h^{-1}(p_i) = \ell_i\cup\ell_i'$ where $\ell_i$ and $\ell_i'$ are copies of $\bP^1_k$. Let $\sigma_L$ denote the action of $\sigma$ on the set of lines
$$
    L\colonequals\{
        \ell_1, \ell_1', \ell_2, \ell_2', \dots, \ell_r, \ell_r'
    \}.
$$
In this case, the nodes $\delta_i\colonequals\ell_i\cap\ell_i'$ for $i=1,\dots,r$ form a single orbit under the action of $\sigma$. This forces $\sigma_L$ to be one of the following forms:
\begin{enumerate}[label=(\roman*)]
\item\label{2-orbits} $L$ has two orbits of size $r$. In this case, we can relabel the components of $C_i$ as $\ell_i^{+}$ and $\ell_i^{-}$ such that there is a cycle decomposition
$
    \sigma_L = (\ell_1^{+}, \dots, \ell_r^{+})(\ell_1^{-}, \dots, \ell_r^{-}).
$
\item\label{1-orbit} $L$ forms a single orbit of size $2r$. In this case, we can relabel the components of $C_i$ as $\ell_i^{+}$ and $\ell_i^{-}$ such that
$
    \sigma_L = (\ell_1^{+}, \dots, \ell_r^{+}, \ell_1^{-}, \dots, \ell_r^{-}).
$
\end{enumerate}
In both cases, we have the $\bP^1_k$-bundles
$$\xymatrix{
    \cC^{\pm} = \ell_1^{\pm}\cup\dots\cup\ell_r^{\pm}\ar[r]^-{h^{\pm}}
    &
    O^{\pm}:\ell_i^{\pm}\ar@{|->}[r] & p_i^{\pm}.
}$$
where $O^{\pm} = \{p_1^{\pm}, ..., p_r^{\pm}\}$ are two copies of $O$. Taking their (disjoint) union gives a conic bundle
$$\xymatrix{
   \widetilde{\cC} = \cC^{+}\amalg\cC^{-}\ar[rr]^-{\widetilde{h} = h^{+}\amalg h^{-}}
   &&
   O^{+}\amalg O^{-}.
}$$
Note that the node $\delta_i$ splits as $\delta_i^{+}\in\ell_i^{+}$ and $\delta_i^{-}\in\ell_i^{-}$ for each $1\leq i\leq r$.

Suppose that case~\ref{2-orbits} holds. Replacing the cycle $(\delta_1, \dots, \delta_r)$ in $\sigma$ by the product
$$
    (\delta_1^{+}, \dots, \delta_r^{+})(\delta_1^{-}, \dots, \delta_r^{-})
$$
defines an element $\widetilde{\sigma}\in\Sym(\widetilde{\cC}(k))$ that satisfies \ref{fiberToFiber} and \ref{inducedByIsom}. Now we have
$$
    \widetilde{h}\widetilde{\sigma}\widetilde{h}^{-1}
    = (p_1^{+}, ..., p_r^{+})(p_1^{-}, ..., p_r^{-})
$$
which is even. Because the fibers of $\widetilde{h}$ are smooth, we conclude that $\widetilde{\sigma}$ is even by the result for type~\ref{nondeg}. Since $\sigma$ has one less odd cycle than $\widetilde{\sigma}$, the parity of $\sigma$ is odd. If case~\ref{1-orbit} holds, we can define $\widetilde{\sigma}\in\Sym(\widetilde{\cC}(k))$ by replacing $(\delta_1, \dots, \delta_r)$ in $\sigma$ with the cycle
$$
    (\delta_1^{+}, \dots, \delta_r^{+}, \delta_1^{-}, \dots, \delta_r^{-}).
$$
Then $\widetilde{\sigma}$ satisfies \ref{fiberToFiber} and \ref{inducedByIsom}, and we have
$$
    \widetilde{h}\widetilde{\sigma}\widetilde{h}^{-1}
    = (p_1^{+}, ..., p_r^{+}, p_1^{-}, ..., p_r^{-})
$$
which is odd. We conclude in a similar way that $\widetilde{\sigma}$ is odd, which implies that $\sigma$ is odd.
\end{proof}

For our applications of the above lemma, we are interested in the case when $B$ is the set of $k$-points on a curve. The following corollary is then immediate.

\begin{cor}
\label{cor:conicBundle_v2}
Let $\cC\rightarrow D$ be a conic bundle over a curve $D$ over $k=\bF_{q}$, $q = 2^m\geq4$. Suppose that $f\in\BBir_k(\cC)$ preserves the conic bundle structure, and let $\rho(f)\in\Aut(D)$ be the induced automorphism on $D$. Then
\begin{itemize}
\item the actions of $f$ on $\cC(k)$ and $\rho(f)$ on $D(k)$ have the same parity, and
\item $f$ induced an even permutation on $\cC(k)$ if $D=\bP^1$.
\end{itemize}
\end{cor}

\begin{proof}
The fibers of $\cC$ over $D(k)$ form an example of a conic bundle over a finite set. The action of $f$ on $\cC(k)$ satisfies properties \ref{fiberToFiber} and \ref{inducedByIsom}. Then the first conclusion follows Lemma~\ref{lemma:keyFact}, and the second statement follows from Proposition~\ref{prop:PGLpermute}.
\end{proof}

%----------Automorphisms of rational del Pezzo surfaces
\subsection{Automorphisms of rational del Pezzo surfaces}
\label{subsect:delPezzo}

Over an arbitrary field $k$, a \emph{del Pezzo surface} $X$ is defined to be a smooth projective surface such that the anticanonical divisor $-K_X$ is ample. The \emph{degree} of $X$ is defined as the integer $d=K_X^2$ which takes values from $1$ to $9$. For example, a del Pezzo surface $X$ of degree $9$ is a \emph{Severi--Brauer surface}, namely, a surface that satisfies $X_{\overline{k}}\colonequals X\otimes_k\overline{k}\cong\bP^2_{\overline{k}}$. Below is a simple observation about automorphisms of del Pezzo surfaces over finite fields:

\begin{prop}
\label{prop:dP_finite_auto}
Let $X$ be a del Pezzo surface over a finite field $k$. Then $\Aut(X)$ is a finite group.
\end{prop}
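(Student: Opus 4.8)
The plan is to exploit the ampleness of the anticanonical class to embed $X$ into a projective space in an $\Aut(X)$-equivariant way, and then to observe that the ambient projective linear group over a finite field is itself finite.

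First I would recall that, by the definition of a del Pezzo surface, $-K_X$ is ample, so that $-mK_X$ is very ample for some integer $m\geq 1$; one may take $m=1$ when $K_X^2\geq 3$, $m=2$ when $K_X^2=2$, and $m=3$ when $K_X^2=1$, but the precise value is immaterial. Fixing such an $m$, the complete linear system $|-mK_X|$ gives a closed embedding $\iota\colon X\hookrightarrow\bP^N$ defined over $k$, where $N+1=\dim_k H^0(X,\cO_X(-mK_X))$, with $\iota^\ast\cO(1)\cong\cO_X(-mK_X)$.

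Next I would use that every automorphism $\phi\in\Aut(X)$ satisfies $\phi^\ast K_X=K_X$, since the canonical sheaf is a functorial invariant; hence $\phi^\ast\cO_X(-mK_X)\cong\cO_X(-mK_X)$, and, as $X$ is projective and geometrically connected so that $H^0(X,\cO_X)=k$, the automorphism $\phi$ induces a well-defined linear action on $H^0(X,\cO_X(-mK_X))$ up to a scalar. This produces a homomorphism $\Aut(X)\to\PGL_{N+1}(k)$, and it is injective because $X$ is embedded by the associated linear system: an automorphism inducing the identity on $\bP^N$ restricts to the identity on $\iota(X)=X$. Finally, since $k=\bF_q$ is finite, $\PGL_{N+1}(\bF_q)$ is a finite group, and therefore so is $\Aut(X)$.

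There is essentially no obstacle here; the only points requiring a little care are the very ampleness of $-mK_X$, which is classical for del Pezzo surfaces, and the equivariance of the embedding, which follows at once from functoriality of the canonical bundle. One could alternatively invoke the representability of the automorphism functor of a polarized projective variety by a group scheme of finite type, whose $\bF_q$-points form a finite set, but the direct argument above is more elementary and self-contained.
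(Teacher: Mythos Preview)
Your proof is correct and follows essentially the same approach as the paper: embed $X$ anticanonically via $|-mK_X|$ into some $\bP^N$, observe that automorphisms preserve $K_X$ and hence extend to elements of $\PGL_{N+1}(\bF_q)$, and conclude from the finiteness of the latter. Your version is in fact slightly more explicit about the injectivity of $\Aut(X)\to\PGL_{N+1}(\bF_q)$, which the paper leaves implicit.
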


\begin{proof}
The anticanonical class $-K_X$ is ample and thus $-rK_X$ becomes very ample for some $r\geq 1$. The linear system $|-rK_X|$ defines an embedding $X\hookrightarrow\bP^n$. Since every $f\in\Aut(X)$ preserves $K_X$, it extends to an automorphism on $\bP^n$. This defines an embedding $\Aut(X)\hookrightarrow\PGL_{n+1}(k)$. Then the statement follows as $\PGL_{n+1}(k)$ is a finite group when $k$ is finite.
\end{proof}

A surface $X$ over a field $k$ is called \emph{rational} if there exists a birational map $X\dashrightarrow\bP^2$ defined over $k$. In this section, we investigate the parities of the permutations on $X(\bF_q)$ induced by automorphisms of a rational del Pezzo surface $X$ over $\bF_q$ for $q=2^m\geq 4$. Our goal is to prove the following theorem:

\begin{thm}
\label{thm:delPezzo}
Automorphisms of a rational del Pezzo surface $X$ over $\bF_{q}$ for $q=2^m\geq4$ induce only even permutations on $X(\bF_q)$.
\end{thm}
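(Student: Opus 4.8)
The strategy is to reduce to a small number of cases according to the degree $d = K_X^2$ of the del Pezzo surface $X$, using the classification of rational del Pezzo surfaces over a finite field together with the tools already developed. For large degree, $X$ is close to $\bP^2$ or to a conic bundle, and the previously established results (Proposition~\ref{prop:PGLpermute}, Corollary~\ref{cor:conicBundle_v2}, Theorem~\ref{thm:parityBirMapIntro}, and Lemma~\ref{lemma:powerOf2}) handle everything. The cases that require genuine work are the small-degree del Pezzo surfaces, especially $d \leq 4$, where $\Aut(X)$ can contain elements that are not visibly linear or fibered. Throughout, since parity is unchanged by passing to an odd power, I would assume $\ord(\sigma) = 2^r$ from the start whenever convenient.

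\textbf{Step 1: Reduce via a $G$-equivariant birational model.} Given an automorphism $\sigma$ of $X$ generating a cyclic group $G \subseteq \Aut(X)$, apply Lemma~\ref{lem:finiteorder} to $G$ (note $\Aut(X)$ is finite by the last proposition of \S\ref{subsubsect:finiteOrder}) to obtain a $G$-equivariant birational map $\phi\colon X'\dashrightarrow X$ where $X'$ is $G$-minimal: either a conic bundle with $\Pic(X')^G\cong\bZ^2$, or a del Pezzo surface with $\Pic(X')^G\cong\bZ$. By Theorem~\ref{thm:parityBirMapIntro}, the parity of $\sigma$ on $X(k)$ equals the parity of the conjugated automorphism on $X'(k)$. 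In the conic bundle case, Corollary~\ref{cor:conicBundle_v2}(1) says the parity equals that of the induced automorphism on the base curve; since $X'$ is rational, the base is $\bP^1$, and Corollary~\ref{cor:conicBundle_v2}(2) gives evenness. So we are reduced to the case where $X$ itself is a $G$-minimal rational del Pezzo surface with $\Pic(X)^G$ of rank one.

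\textbf{Step 2: Handle the $G$-minimal del Pezzo cases by degree.} When $\Pic(X)^G$ has rank one, the degree $d$ is restricted. For $d = 9$ the surface $X$ is rational Severi–Brauer, hence $X \cong \bP^2$, and $\sigma \in \PGL_3(k)$ acts evenly by Proposition~\ref{prop:PGLpermute}. For $d = 8$, rational $G$-minimal del Pezzo surfaces of degree $8$ with $\Pic^G$ of rank one are quadric surfaces $Q\subset\bP^3$, on which $\Aut$ embeds in $\PGL_4(k)$ (it preserves $-\tfrac12 K$, which is very ample); so $\sigma$ extends to a linear automorphism of $\bP^3$ and restricts to a permutation of $Q(k) \subset \bP^3(k)$. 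Here I would argue directly: decompose $\sigma$ into a power-of-two order element and analyze fixed points, invoking Lemma~\ref{lemma:powerOf2}, or alternatively exhibit $Q$ as a conic bundle over $\bP^1$ when the $G$-action permits and fall back to Step~1. For the remaining degrees $d \in \{1,2,3,4,6\}$ (degrees $5$ and $7$ admit no rank-one $G$-minimal rational examples over a perfect field), I would use Lemma~\ref{lemma:powerOf2}: for an element of order $2$ it suffices to show the number of fixed points is $\equiv 1 \pmod 4$, and more generally one controls the cycle structure from the fixed loci of $\sigma, \sigma^2, \dots$, each of which is a (possibly reducible) curve or a finite set of points on $X$ whose $k$-point count can be read off from the Lefschetz-type trace on $\bar{k}$-cohomology (equivalently, from the Weil bound shape $|X(k)| = q^2 + aq + 1$) together with the fact that $\sigma$ acts on the Picard lattice.

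\textbf{Main obstacle.} The delicate point is the small-degree del Pezzo cases, $d \leq 4$: there $\Aut(X)$ need not be generated by "linear" or "fibered" maps, and an involution can have a fixed locus that is a curve of positive genus (e.g. the Geiser involution on a del Pezzo of degree $2$, or the Bertini involution in degree $1$). To compute the parity via Lemma~\ref{lemma:powerOf2}\ref{pwerof2_parity} one must count $|X(k)^\sigma|$ modulo $4$, and this requires knowing the number of $k$-points on the fixed curve modulo $4$ — which again has the form $q + bq^{1/2}\cdot(\text{something}) + 1$ only when the curve is defined over $k$ with controllable cohomology. The cleanest route is probably to note that such a fixed curve, being a smooth curve of low genus fixed by an automorphism, has point count $\equiv 1 \pmod{q}$ hence $\equiv 1 \pmod 4$, so the total fixed-point count on $X$ is $\equiv 1 \pmod 4$ and Lemma~\ref{lemma:powerOf2}\ref{pwerof2_parity} applies; for higher-order elements one iterates. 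Making this rigorous across all the sporadic automorphism groups in degrees $1$ and $2$ is the part that will take real care, and I would organize it as a finite case check against the known classification of automorphism groups of del Pezzo surfaces of each degree.
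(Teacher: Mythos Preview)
Your Step~1 reduction is a reasonable opening move, but it is not how the paper proceeds and it introduces an error: rational $G$-minimal del Pezzo surfaces of degree~$5$ with $\Pic(X)^G\cong\bZ$ \emph{do} exist (the Weyl group is $S_5$, and a $5$-cycle acting on the $A_4$ root lattice has no nonzero invariants), so your list $\{1,2,3,4,6\}$ is incomplete. More seriously, Step~2 is only a sketch: the assertion that a fixed curve ``has point count $\equiv 1 \pmod q$'' is false for curves of positive genus (an elliptic curve over $\bF_q$ has $q+1-a$ points with $a$ unconstrained modulo~$4$), and you give no mechanism for passing from involutions to elements of order $2^r$ with $r>1$. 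Counting fixed points of all powers $\sigma^{2^j}$ modulo~$4$ is not enough to recover the parity of~$\sigma$; you would need the counts modulo higher powers of~$2$, and you have not indicated how to obtain these.

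The paper's proof is organized quite differently and avoids these difficulties. For $3\le d\le 8$ it does \emph{not} attempt a global fixed-point count. Instead, after reducing to $\ord(g)$ a power of~$2$, it uses Lemma~\ref{lemma:powerOf2}\ref{powerOf2_fixpoints} only to produce a single fixed rational point $p\in X(k)$, and then argues geometrically according to the position of $p$ relative to the $(-1)$-curves on $X_{\bar k}$: either one blows up $p$ and contracts a Galois- and $g$-invariant collection of $(-1)$-curves to land on a del Pezzo of different degree, or one finds a conic bundle structure through~$p$. Each step is a birational conjugation, so Theorem~\ref{thm:parityBirMapIntro} transfers the parity; the recursion terminates at $\bP^2$, a conic bundle over $\bP^1$, or degree~$2$. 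For $d=1,2$ the paper exploits the anticanonical double cover $X\to\bP^2$ (resp.\ $X\to\bP[2,1,1]$): the deck transformation (Geiser, resp.\ Bertini) is shown to be even by a direct fixed-locus count---crucially, in characteristic~$2$ this locus is cut out by the linear term of the Artin--Schreier equation and is a \emph{conic} (resp.\ a low-degree curve), not a high-genus curve---and then a separate argument (Proposition~\ref{prop:Galoi2Cover2}) using Artin--Schreier theory of the function field handles arbitrary automorphisms by comparing them to an explicit lift of the induced automorphism of the base. Your proposed ``finite case check against the known classification of automorphism groups'' would, if carried out, amount to redoing this work by brute force; the paper's route is both shorter and structurally clearer.
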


We will proceed the proof case-by-case with the degree $d$ going from high to low. As the parity of a permutation is invariant upon taking an odd power, we will assume the order of a permutation to be a power of $2$ when studying its parity. The following lemma will be useful under this assumption:

\begin{lemma}
\label{lem:powerOf2}
Let $X$ be a surface defined over $k=\bF_{q}$, $q = 2^m\geq 4$, which is rational over the algebraic closure, and let $\sigma\in\Sym(X(k))$.
\begin{enumerate}[label = \textup{(\arabic*)}]
    \item\label{powerOf2_fixpoints}
    If $\ord(\sigma) = 2^r$ for some $r\geq0$, then $\sigma$ has odd number of fixed points.
    \item\label{powerOf2_parity}
    If $\ord(\sigma) = 2$ and the number of fixed points equals $1$ modulo $4$, then $\sigma$ is even.
\end{enumerate}
\end{lemma}

\begin{proof}
It is well-known that $|X(k)| = q^2 + aq + 1$ for some non-negative integer $a$ (\cite{Wei56}, see also \cite{Poo17}*{Proposition~9.3.24}). Since the size of each orbit of $\sigma$ divides $\ord(\sigma)=2^r$, we have
$$
    q^2 + aq + 1
    = 2\ell + |\{\text{fixpoints of }\sigma\}|
    \quad\text{for some}\quad
    \ell\geq 0
$$
which implies \ref{powerOf2_fixpoints}. Assume $\ord(\sigma)=2$, that is, $\sigma$ is an involution. In particular, $\sigma$ is a product of disjoint $2$-cycles. If $\sigma$ has $4b+1$ fixed points, then the amount of $2$-cycles equals
$$
    \frac{1}{2}(|X(k)| - (4b + 1))
    = \frac{1}{2}(q^2 + aq - 4b)
$$
which is an even number for $q=2^m\geq 4$. This proves \ref{powerOf2_parity}.
\end{proof}

\begin{rmk}
\label{rmk:oddpermutation-dP6}
Over $\bF_2$, there exists an automorphism of a rational del Pezzo surfaces $X$ which induces an odd permutation on $X(\bF_2)$. To construct an example, one can start with a quadratic transformation $f\in\BCr_2(\bF_2)$, that is, $f$ is defined by the linear system of conics passing through three non-collinear points in $\bP^2$ that form a $\Gal(\bF_8/\bF_2)$-orbit. By Lemma~\ref{lem:quad-invol}, upon composing $f$ with a linear transformation, we can assume that $f$ is involutive, so that $\Bs(f)=\Bs(f^{-1})$. Blowing up $\bP^2$ at $\Bs(f)$ produces a del Pezzo surface $X$ of degree $6$ and resolves $f$ as an automorphism $f'$ on $X$. The action of $f$ on $\bP^2(\bF_2)$ is odd by Lemma~\ref{lem:std-invol}, so the action of $f'$ on $X(\bF_2)$ is odd as well by Theorem~\ref{thm:parityBirMapIntro}.
\end{rmk}

%-----Rational del Pezzo surfaces of degree at least 4
\subsubsection{Rational del Pezzo surfaces of degree at least 4}
\label{subsubsect:dPS_d>=4}

Here we prove that the claim of Theorem~\ref{thm:delPezzo} holds for rational del Pezzo surfaces of degree $d\geq 4$. The case $d=9$ is covered by Proposition~\ref{prop:PGLpermute} since a rational Severi--Brauer surface is isomorphic to $\bP^2$ by Ch\^{a}telet. (See, for example, \cite{GS17}*{Theorem~5.1.3}.) We prove the remaining cases below:

\begin{prop}
\label{prop:delPezzo_4d8}
Automorphisms of a rational del Pezzo surface $X$ over $k=\bF_{q}$, $q=2^m\geq4$, of degree $4\leq d\leq 8$ induce only even permutations on $X(\bF_q)$.
\end{prop}

\begin{proof}
Let $g\in\Aut(X)$. Because raising to an odd power does not change the parity of a permutation, we can assume the action on $X(k)$ induced by $g$ has order a power of $2$. This allows us to choose a point $p\in X(k)$ fixed by $g$ as guaranteed by Lemma~\ref{lem:powerOf2}~\ref{powerOf2_fixpoints}.

\begin{description}[leftmargin=0cm]
\item[Case~$d=8$.]
If $X$ is not minimal (over $k$), then there exists a $(-1)$-curve $E\subset X$ over $k$, and contracting $E$ gives a morphism $h\colon X\to\bP^2$. Every $g\in\Aut(X)$ leaves $E$ invariant,
thus is conjugate to an automorphism of $\bP^2$ fixing $h(E)\in\bP^2$. Therefore, $g$ induces an even permutation on $X(k)$ by Proposition~\ref{prop:PGLpermute} and Theorem~\ref{thm:parityBirMapIntro}.

If $X$ is minimal, then it is a quadric surface obtained by blowing up $\bP^2_k$ at a point of degree $2$ (resp. two rational points), and then contracting the proper transform of the unique line through that point (resp. the two rational points). In particular, over the quadratic extension $L\colonequals\bF_{q^2}$, we have $X_L\cong\bP^1_L\times\bP^1_L$. Let $X_7$ be the blow-up of $X$ at the fixed point $p$ and let $E$ be the exceptional curve. Then the two rulings of $X\cong\bP^1_L\times\bP^1_L$ meeting at $p$ lift to disjoint $(-1)$-curves $E_1, E_2\subset X_7$ over $L$ that are conjugate to each other (resp. both rational) over $k$, and $g$ is conjugate to $g'\in\Aut(X_7)$ which leaves the set $\{E_1, E_2\}$ invariant. Let $h\colon X_7\to\bP^2_k$ be the contraction of $E_1$ and $E_2$. Then $hg'h^{-1}$ is a $\PGL_3(k)$-action on $\bP^2$ leaving the set $\{h(E_1), h(E_2)\}$ invariant.
It then follows from Proposition~\ref{prop:PGLpermute} and Theorem~\ref{thm:parityBirMapIntro} that $g$ induces an even permutation.

\item[Case~$d=7$.]
There is a unique $(-1)$-curve $E$ on $X$ that is invariant under both $\Gal(\overline{k}/k)$ and $g$. Hence contracting $E$ gives $X\to X_8$ where $X_8$ is a del Pezzo surface of degree 8, and $g$ descends to an automorphism $g_8$ on $X_8$. The result then follows from Theorem~\ref{thm:parityBirMapIntro} and Case $d=8$.

\item[Case~$d=6$.]
Over the algebraic closure, $X_{\overline{k}}$ is obtained by blowing up three points $a_1,a_2,a_3$ in $\bP^2_{\overline{k}}$, and it contains six $(-1)$-curves $E_1$, ..., $E_6$ such that, for $i\neq j$, we have $E_i\cdot E_j = 1$ if $j\equiv i+1\pmod{6}$ and $E_i\cdot E_j = 0$ otherwise. Note that both $\Gal(\overline{k}/k)$ and $g$ act on this set of $(-1)$-curves and preserve the intersection relations.

If $p$ does not lie on any of these $(-1)$-curves, then the blow-up $X_5=\Bl_p(X)$ is a del Pezzo surface of degree~$5$, and $g$ lifts to an automorphism $g_5$ of $X_5$. Over $\overline{k}$, the three lines in $\bP^2_{\overline{k}}$ that pass through $p$ and one of $a_1,a_2,a_3$ lift to pairwise disjoint $(-1)$-curves on $X_5$ that meet three disjoint members of $\{E_1,\dots,E_6\}$. Since this configuration is invariant under the action of both $\Gal(\overline{k}/k)$ and $g_5$, we can contract the three new $(-1)$-curves to get $X_5\rightarrow X_8$, where $X_8$ is a del Pezzo surface of degree $8$, such that $g_5$ descends to an automorphism $g_8$ of $X_8$. By Case~$d=8$, $g_8$ induces an even permutation on $X_8(k)$, and we finish by applying Theorem~\ref{thm:parityBirMapIntro}.

Suppose $p$ lies on one of the $(-1)$-curves, say, $E_1$. If $p$ does not lie on any other $(-1)$-curve, then $E_1$ is invariant under both $\Gal(\overline{k}/k)$ and $g$. We can then blow down $E_1$ to get $X\to X_7$ where $X_7$ is a del Pezzo surface of degree $7$, and $g$ descends to an automorphism $g_7$ of $X_7$. By Case $d=7$, $g_7$ induces an even permutation on $X_7(k)$, and we finish by applying Theorem~\ref{thm:parityBirMapIntro}.
Otherwise, $p$ lies on the intersection of two lines, say, $E_1$ and $E_2$. Then the orbit structure of $\{E_1,\ldots,E_6\}$ under both $\Gal(\overline{k}/k)$ and $g$ is either $\{E_1\}\cup\dots\cup\{E_6\}$ or $\{E_1,E_2\}\cup\{E_3,E_6\}\cup\{E_4,E_5\}$. In either case, $\{E_3,E_6\}$ is invariant under both $\Gal(\overline{k}/k)$ and $g$, so blowing down $E_3,E_6$ yields $X\to X_8$, and $g$ descends to an automorphism on $X_8$. We finish by applying Case~$d=8$ and  Theorem~\ref{thm:parityBirMapIntro}.

\item[Case~$d=5$.]
Over the algebraic closure, $X_{\overline{k}}$ is obtained by blowing up four points $b_1,b_2,b_3,b_3$ in $\bP^2_{\overline{k}}$, and it contains ten $(-1)$-curves, where six of them come from the lines passing through two of $b_1,b_2,b_3,b_3$, and the remaining four are the exceptional curves. Let us denote the $(-1)$-curve passing through $b_i$ and $b_j$ as $D_{kl}$, where $k<l$ and $\{k,l\}=\{1,2,3,4\}\setminus\{i,j\}$, and denote the exceptional curve over $b_i$ as $D_{i5}$. In this setting, we have $D_{ij}\cdot D_{kl} = 1$ if $i,j,k,l$ are pairwise distinct and $D_{ij}\cdot D_{kl} = 0$ otherwise.

If $p$ does not lie on any of the $(-1)$-curves, then the blow-up $X_4=\Bl_p(X)$ is a del Pezzo surface of degree~4, and $g$ lifts to an automorphism $g_4$ on $X_4$. Let $E_p\subset X_4$ denote the exceptional curve lying above $p$. Over $\overline{k}$, the lines (resp. the conic) passing through $p$ and one of (resp. all of) $b_1,b_2,b_3,b_4$ lift to five pairwise disjoint $(-1)$-curves that intersect $E_p$. These $(-1)$-curves form a set invariant under $\Gal(\overline{k}/k)$, so we can blow them down to get $X_4\to\bP^2$, and $g_4$ also descends to an automorphism on $\bP^2$. An application of Proposition~\ref{prop:PGLpermute} and Theorem~\ref{thm:parityBirMapIntro} does the job.

Suppose $p$ lies on a $(-1)$-curve, say, $D_{12}$. If $p$ does not lie on any other $D_{ij}$, then $D_{12}$ is invariant under both $\Gal(\overline{k}/k)$ and $g$, so we can contract it to get $X\to X_6$, where $X_6$ is a del Pezzo surface of degree~6, and $g$ descends to an automorphism of $X_6$. We are then done by Case~$d=6$ and Theorem~\ref{thm:parityBirMapIntro}. If $p$ lies on another $(-1)$-curve, we can assume this is $D_{34}$. One can verify that these are the only two $(-1)$-curves that contain $p$. It follows that $D_{12}\cup D_{34}$ is defined over $k$ and invariant under $g$. The other $(-1)$-curves that intersect $D_{12}\cup D_{34}$ are $D_{35}, D_{45}, D_{15}, D_{25}$. Hence the union $D_{35}\cup D_{45}\cup D_{15}\cup D_{25}$ is defined over $k$ and invariant under $g$. These four curves are pairwise disjoint. Contracting them gives $X\to\bP^2$, and $g$ descends to an automorphism of $\bP^2$. We are done after applying Proposition~\ref{prop:PGLpermute} and Theorem~\ref{thm:parityBirMapIntro}.

\item[Case~$d=4$.]
First assume that $p$ does not lie on a $(-1)$-curve. Then the blow-up of $X$ at $p$ is a cubic surface $X_3\subset\bP^3$, and the exceptional curve $E\subset X$ is a line in $\bP^3$ over $k$. Each plane $H\subset\bP^3$ containing $E$ intersects $X_3$ in a residual conic, so the pencil of such planes determines a conic bundle $X_3\to\bP^1$ over $k$. Corollary~\ref{cor:conicBundle_v2} yields the claim in this case.

Suppose that $p$ lies on a $(-1)$-curve. If it lies on only one such curve, then we can blow this curve down, and $g$ will descend to an automorphism of a del Pezzo surface of degree $5$. Then the claim follows from Case~$d=5$ and Theorem~\ref{thm:parityBirMapIntro}. Otherwise, $p$ lies on exactly two $(-1)$-curves. This defines a (singular) conic $Q$ on $X$. We can then define a conic bundle as follows: The linear system $|-K_X|$ embeds $X$ into $\bP^4$ as an intersection of two quadrics. Consider the pencil of hyperplanes containing $Q$. Each hyperplane intersects $X$ at a conic residual to $Q$. This defines a morphism $X\to\bP^1$ where the fibers are conics. Since $g$ preserves $Q$ and extends to an automorphism of $\bP^4$, it preserves the conic bundle structure. Hence, it follows from Corollary~\ref{cor:conicBundle_v2} that $g$ induces an even permutation on $X(k)$.
\qedhere
\end{description}
\end{proof}

%-----Rational del Pezzo surfaces of low degrees
\subsubsection{Rational del Pezzo surfaces of low degrees}
\label{subsubsect:dPS_d=123}

To prove Theorem~\ref{thm:delPezzo} for rational del Pezzo surfaces of degree $d=1,2,3$, we first prove a fact about permutations induced by a double cover structure that appear in these cases.

\begin{lemma}
\label{lem:Galoi2Cover2}
Let $Y=\bP(a_0,\ldots,a_n)$ be a weighted projective space, with $a_i$ the weights, over $k = \bF_q$, where $q = 2^m\geq2$. Let $\pi\colon X\to Y$ be a degree two Galois cover where $X$ is given by
$$
    w^2+fw+g=0,
$$
for some nonzero homogeneous polynomials $f$ and $g$ in the weighted polynomial ring $k[x_0,\ldots,x_n]$ of degrees $d$ and $2d$, respectively. Let $\beta\in\Aut(X)$ be the deck transformation and $B\subset X$ be the ramification locus defined by $f=0$. Assume that there is an exact sequence of groups
$$\xymatrix{
    1\ar[r] & \langle\beta\rangle\ar[r] & \Aut(X)\ar[r]^-{\pi_\ast} & \Aut(Y)
}$$
where $\pi_*h\colonequals\pi h\pi^{-1}$ for every $h\in\Aut(X)$,
and that $\beta$ acts as an even permutation on $X(k)$.
Then every $h\in\Aut(X)$ induces an even permutation on $X(k)\setminus B(k)$.
\end{lemma}

\begin{proof}
Let $h\in\Aut(X)$ and denote $h_0\colonequals\pi_*h\in\Aut(Y)$. Since $h_0$ fixes the branch locus, $h_0^*(f)=cf$ for some nonzero constant $c\in k$. Let $k(X)$ be the function field of $X$, which is a quadratic extension over $k(Y)$, so by the Artin--Shreier theory, it is given by
$$
    u^2+u=z
    \quad\text{for some}\quad
    z\in k(Y).
$$
In our setting, the equation $w^2+fw+g=0$ can be turned into
\begin{equation}
\label{eqn:ArtinShreier-1}
    w'^2+w' = \frac{g}{f^2}
    \quad\text{where}\quad
    w' = \frac{g}{fw}.
\end{equation}
This is our Artin--Shreier extension. Now consider the double cover coming from the composition $h_0\pi\colon X\to Y$. Under this viewpoint, we can repeat the same calculation to conclude that $k(X)$ is given by the extension
\begin{equation}
\label{eqn:ArtinShreier-2}
    w''^2+w'' = \frac{g'}{c^2f^2}
    \quad\text{where}\quad
    g'=h_0^*(g).
\end{equation}
It is well-known that \eqref{eqn:ArtinShreier-1} and \eqref{eqn:ArtinShreier-2} define the same extension if and only if there exists $a\in k(Y)$ such that
\begin{equation}
\label{eqn:ArtinShreier-equalext}
    \frac{g'}{c^2f^2} = \frac{g}{f^2}+a^2+a,
    \qquad\text{or equivalently,}\qquad
    g'=c^2g+c^2f^2(a^2+a).
\end{equation}
By comparing the degrees among the terms, we conclude that $a\in k$.

Define an automorphism $h'\in\Aut(X)$ by
$$
    x_i\mapsto h_0^*(x_i),\quad w\mapsto cw+caf.
$$
Then $h'^*(f)=cf$ and $h'^*(g)=g'$, and one can use \eqref{eqn:ArtinShreier-equalext} to verify that this is well-defined. Let us show that $h'$ induces an even permutation on $X(k)\setminus B(k)$ case-by-case:
\begin{itemize}
\item ($a=0$) Let $p\in \pi(X(k))\subset Y(k)$, singular or non-singular, and not lying on the branch locus, and $O_p$ be the orbit of $p$ under $h_0$. Let $r=|O_p|$ and note that $\pi^{-1}(O_p)$ consists of $2r$ many $k$-points. The assumption $a=0$ implies that $\pi^{-1}(O_p)$ breaks into two orbits of the same size under $h'$. Hence $h'$ induces an even permutation on $\pi^{-1}(O_p)$. As a consequence, $h'$ induces an even permutation on $X(k)\setminus B(k)$.

\item ($a=1$) The transformation $\beta$ is defined by $\beta^*(x_i)=x_i$ and $\beta^*(w)=w+f$, so $h'\beta$ has the same formula as $h'$ but with $a = 0$, thus induces an even permutation on $X(k)\setminus B(k)$ by the previous case. The fact that $\beta$ fixes every point on $B$ implies that it is an even permutation on $X(k)\setminus B(k)$. Therefore, $h'$ is an even permutation on $X(k)\setminus B(k)$.

\item ($a\neq0,1$) Keep the notation of $p,O_p,r$ as in the case $a=0$. Because $h_0^r$ fixes $p$ as a point in $Y=\bP(a_1,\dots,a_n)$, it rescales the coordinates of $p$ by a constant $e$ respecting the weights. Since $h_0^*(f)=cf$, plugging in $p$ gives $f(h_0(p))=cf(p)$. This implies $e^d=c^r$. As a result, we get $g(h_0^r(p))=c^{2r}g(p)$. On the other hand, applying $h_0^*$ inductively on \eqref{eqn:ArtinShreier-equalext} gives
$$
    h_0^{*r}(g)
    = c^{2r}g + r(a^2+a)c^{2r}f^2.
$$
Plugging in $p$, we get
$$
    c^{2r}g(p)
    = g(h_0^r(p))
    = c^{2r}g(p) + r(a^2+a)c^{2r}f(p)^2,
$$
so that $r(a^2+a)=0$, which implies $r$ is even. Hence $h'^{*r}(w)=c^rw$, so both points above $p$ are fixed by $h'^r$. So then $\pi^{-1}(O_p)$ breaks into two orbits of size $r$ under $h'$, which shows $h'$ induces even permutation on $X(k)\setminus B(k)$.
\end{itemize}

Now we finish the proof by showing $h$ is an even permutation on $X(k)\setminus B(k)$. The composition $hh'^{-1}$ acts as the identity on $Y$, so it is either the identity or $\beta$. Because $h'$ and $\beta h'$ both induce even permutations on $X(k)\setminus B(k)$, the result follows.
\end{proof}

\begin{prop}
\label{prop:delPezzo_d23}
Automorphisms of a rational del Pezzo surface $X$ over $\bF_{q}$, where $q=2^m\geq4$, of degree $d=2,3$ induce only even permutations on $X(\bF_q)$.
\end{prop}

\begin{proof}
\begin{description}[leftmargin=0cm]

\item[Case~$d=2$.]
The anticanonical model of $X$ is a hypersurface of degree 4 in the weighted projective space $\bP(w,x,y,z)=\bP(2,1,1,1)$, defined by
$$
    w^2+fw=g,
$$
where $f,g\in k[x,y,z]$ have degrees 2, 4 respectively \cite{kollar1999rational}*{Theorem~III.3.5}. The linear system $|-K_X|$ defines a double cover $\pi\colon X\to\bP^2$ sending $[w:x:y:z]$ to $[x:y:z]$. The double cover involution on $X$ is called the \emph{Geiser involution}, which we denote by $\gamma$. Since $K_X$ is preserved under any automorphism, we have an exact sequence
$$
    0\longrightarrow\langle\gamma\rangle
    \longrightarrow\Aut(X)
    \longrightarrow\Aut(\bP^2).
$$

Let us first prove that $\gamma$ induces an even permutation. By Lemma~\ref{lem:powerOf2}~\ref{powerOf2_parity}, it suffices to show that the fixed point set $\mathrm{Fix}(\gamma)(\bF_q)$ of $\gamma$ in $X(k)$ has cardinality $|\mathrm{Fix}(\gamma)(\bF_q)|\equiv 1\bmod4$. We have
\begin{equation}
\label{eqn:geiser}
    \gamma([w:x:y:z])=[-w-f:x:y:z].
\end{equation}
In characteristic 2, the fixed locus is given by $f=0$, a conic in $\bP^2$. This contains $q+1$ many $\bF_q$-points if it is smooth. If singular, it contains either $1$, $2q+1$, or $q+1$ many $\bF_q$-points if it consists respectively of two conjugate $\bF_{q^2}$-lines, two $\bF_q$-lines, or a double line. Because $q=2^m\geq 4$, we have $|\mathrm{Fix}(\gamma)(\bF_q)|\equiv1\bmod4$, as desired.

Now applying Lemma~\ref{lem:Galoi2Cover2}, we conclude that every $h\in\Aut(X)$ induces an even permutation on $X(k)\setminus B(k)$ where $B=\{f=0\}$. Hence, to finish the proof for $d=2$, it suffices to show $h$ induces an even permutation on $B(k)$. Since $B$ is a conic in $\bP^2$, this follows from Lemma~\ref{lemma:keyFact}.

\item[Case~$d=3$.]
Let $g\in\Aut(X)$ and assume that its action on $X(k)$ has order a power of $2$. Then Lemma~\ref{lem:powerOf2}~\ref{powerOf2_fixpoints} implies that $g$ has a fixed point $p\in X(k)$. If $p$ does not lie on any $(-1)$-curve of $X_{\overline{k}}$, then $g$ lifts to the blow-up $\Bl_p(X)$ which is a del Pezzo surface of degree~$2$. Then the result follows from Case~$d=2$ proved above and Theorem~\ref{thm:parityBirMapIntro}.

Suppose $p$ lies on exactly one $(-1)$-curve $L$. Then $L$ is defined over $k$ and invariant under $g$. Contracting $L$ gives a del Pezzo surface $X_4$ of degree $4$, and $g$ descends to an automorphism of $X_4$. Then the result follows from Proposition~\ref{prop:delPezzo_4d8} and Theorem~\ref{thm:parityBirMapIntro}.

Suppose $p$ lies on exactly two $(-1)$-curves $L_1,L_2$. The linear system $|-K_X|$ embeds $X$ as a cubic surface in $\bP^3$. The plane containing $L_1,L_2$ intersects $X$ at a third $(-1)$-curve $L_3$. Since the union $L_1\cup L_2$ is invariant under both $\Gal(\overline{k}/k)$ and $g$, the curve $L_3$ is also invariant under $\Gal(\overline{k}/k)$ and $g$. Hence we can contract $L_3$ and conclude as in the previous case.

Suppose $p$ lies on three $(-1)$-curves $L_1,L_2,L_3$. Then $p$ is an Eckardt point, and $g$ lifts to an automorphism $g_2$ on the blow-up $X_2\colonequals\Bl_p(X)$, which is a weak del Pezzo surface of degree~$2$. The strict transforms of $L_1,L_2,L_3$ give a $\Gal(\overline{k}/k)$-invariant set of three disjoint $(-2)$-curves on $X_2$. We can contract them to get $X_2\to Y$, and $g_2$ descends to an automorphism on $Y$. The morphism $X_2\to\bP^2$ induced by the projection from $p$ factors through $Y\to\bP^2$, which is a double cover ramified along a singular quartic curve. (The singular points of $Y$ are above the singular points of the quartic.) The same argument as in Case $d=2$ above shows that every automorphism of $Y$ induces an even permutation. We finish by applying Theorem~\ref{thm:parityBirMapIntro}.
\qedhere
\end{description}
\end{proof}

\begin{prop}
\label{prop:delPezzo_d1}
Automorphisms of a rational del Pezzo surface $X$ over $\bF_{q}$, where $q=2^m\geq4$, of degree $d=1$ induce only even permutations on $X(\bF_q)$.
\end{prop}

\begin{proof}
The anticanonical model of $X$ is a hypersurface of degree $6$ in the weighted projective space $\bP(w,z,x,y)=\bP(3,2,1,1)$, defined by
$$
    w^2+a_1wz+a_3w=z^3+a_2z^2+a_4z+a_6
$$
where $a_i\in k[x,y]$ is homogeneous of degree $i$ \cite{kollar1999rational}*{Theorem~III.3.5}. The linear system $|-K_X|$ defines a rational map
$$
    \rho\colon X\dashrightarrow\bP^1
    : [w:z:x:y]\mapsto[x:y]
$$
whose indeterminacy locus consists of the single point $O\colonequals[1:1:0:0]\in X(\bF_q)$, and its general fibers are elliptic curves possessing $O$ as the identity elements. Since $K_X$ is fixed under any automorphism of $X$, we get an exact sequence
$$
    1\longrightarrow
    G\longrightarrow
    \Aut(X)\longrightarrow
    \Aut(\bP^1).
$$
Every element in $G$ has the form
$
    [w:z:x:y]\mapsto[W(w,z,x,y):Z(w,z,x,y):x:y]
$
which preserves the equation of $X$. Comparing the degrees in $x,y$ yields that $W=w$ or $W=w-a_1z-a_3$, and $Z^3=z^3$. Furthermore, if $a_4\neq0$, then $Z=z$, which implies that $G\simeq\bZ/2\bZ$ and is generated by the \emph{Bertini involution}
\begin{equation}
\label{eqn:bertini}
    \beta\colon[w:z:x:y]\mapsto[w-a_1z-a_3:z:x:y].
\end{equation}
(This involution induces the inverse map under the group law when restricting to a smooth fiber of the elliptic fibration $\rho\colon X\dashrightarrow\bP^1$.) Suppose that $a_4=0$. If $a_2\neq0$, then $Z^2=z^2$, thus $Z=z$, which implies again that $G=\langle\beta\rangle$. If $a_2=0$ and there exists a primitive third root of unity $\delta$, then $G$ is generated by $\beta$ and the element $[w:z:x:y]\mapsto[w:\delta z:x:y]$, hence $G\simeq\bZ/6\bZ$. If there is no such $\delta$, then $G=\langle\beta\rangle$.

We first show that the involution $\beta$ induces an even permutation on $X(\bF_q)$. By Lemma~\ref{lem:powerOf2}~\ref{powerOf2_parity}, it suffices to show that the fixed point set $\mathrm{Fix}(\beta)(\bF_q)$ of $\beta$ in $X(\bF_q)$ has cardinality $1\bmod 4$. In characteristic~$2$, the fixed locus is given by $a_1(x,y)z+a_3(x,y)=0$. Note that $O=[1:1:0:0]$ is a fixed rational point, and is the only such point when $x=y=0$. We now proceed by two cases depending on whether $a_1 = a_1(x,y)$ is the zero polynomial or not:
\begin{itemize}
\item If $a_1\neq0$, each $[x:y]\in\bP^1(\bF_q)$ with $a_1(x,y)\neq0$ contributes a fixed $\bF_q$-points by setting
\begin{align*}
    z&=a_3(x,y)/a_1(x,y),\\
    w^2&=z^3+a_2(x,y)z^2+a_4(x,y)z+a_6(x,y),
\end{align*}
which gives $q$ more points. Now let $[x_0:y_0]\in\bP^1(\bF_q)$ be such that $a_1(x_0,y_0)=0$. If $a_3(x_0,y_0)\neq0$, then $\rho^{-1}([x_0:y_0])$ has no fixed $\bF_q$-point. If $a_3(x_0,y_0)=0$, then $\rho^{-1}([x_0:y_0])$ is a singular affine curve with $q$-many $\bF_q$-points (unique solution in $w$ for every choice of $z$) which are all fixed under $\beta$. Hence, together with $O$, we have a total of either $q+1$ or $2q+1$ fixed $\bF_q$-points on $X$. In particular, $|\mathrm{Fix}(\beta)(\bF_q)|\equiv1\bmod4$.

\item If $a_1=0$, then $a_3\neq0$ since $X$ is smooth. Let $[x_0:y_0]\in\bP^1(\bF_q)$ be such that $a_3(x_0,y_0)=0$. Then the same argument as above shows that $\rho^{-1}([x_0:y_0])$ has $q$ many fixed $\bF_q$-points. Hence, $|\mathrm{Fix}(\beta)(\bF_q)|=q+1\equiv1\bmod4$.
\end{itemize}

The involution $\beta$ is also the deck transformation of the double cover $X\to\bP(2,1,1)$ which maps $[w:z:x:y]$ to $[z:x:y]$. This double cover is defined by $|-2K_X|$, which is preserved under any automorphism of $X$, so there is an exact sequence
$$
    1\longrightarrow\langle\beta\rangle
    \longrightarrow\Aut(X)
    \longrightarrow\Aut(\bP(2,1,1)).
$$
By Lemma~\ref{lem:Galoi2Cover2}, we get that any $h\in\Aut(X)$ induces an even permutation on $X(\bF_q)\setminus B(\bF_q)$ where $B\colonequals\{a_1z+a_3=0\}$. It remains to show that $h$ induces an even permutation on $B(\bF_q)$. Note that $O\in B(\bF_q)$ is the unique base-point of $|-K_X|$, so it is fixed under $h$. Moreover, since we only care about the rational points, it suffices to consider the reduced subscheme $B_0\colonequals B_{red}\setminus\{O\}$. We proceed by cases:
\begin{itemize}
\item If $a_1\neq0$ and $a_1$ does not divide $a_3$, then $B_0$ is isomorphic to $\bA^1$. Hence $h|_{B_0}$ induces an even permutation as a consequence of Proposition~\ref{prop:PGLpermute}.

\item If $a_1\neq0$ and $a_1$ divides $a_3$, then $B_0$ is isomorphic to a union of two copies of $\bA^1$ meeting at a point, where one copy is a section of the elliptic fibration while the other is a fiber. The result again follows from Proposition~\ref{prop:PGLpermute}.

\item If $a_1=0$, then $B_0$ is isomorphic to a disjoint union of $r$ copies of $\bA^1$ where $0\leq r\leq 3$. The case $r=0$ is trivial, and the case $r=1$ follows from Proposition~\ref{prop:PGLpermute}. If $r=2$, we can identify the disjoint union $\bA^1\cup\bA^1$ as the smooth part of a degenerate conic, so this case is covered by Lemma~\ref{lemma:keyFact}. Suppose that $r=3$. If $h$ leaves one $\bA^1$ invariant while switches the other two, then the claim follows from Proposition~\ref{prop:PGLpermute} and Lemma~\ref{lemma:keyFact}. If $h$ acts on the three copies of $\bA^1$ as a $3$-cycle, we can first compactify each $\bA^1$ as a $\bP^1$, which gives us a $\bP^1$-bundle over a finite set of $3$ elements, and then extend the action of $h$ to this bundle by multiplying it with a disjoint $3$-cycle. This new permutation is even by Lemma~\ref{lemma:autoProjBundle}, which implies that the original permutation is even.
\end{itemize}
As a result, the actions of $h$ are even on $X(\bF_q)\setminus B(\bF_q)$ and $B(\bF_q)$, thus is even on $X(\bF_q)$.
\end{proof}

\begin{proof}[Proof of Theorem~\ref{thm:delPezzo}]
Let $d=K_X^2$ be the degree of $X$. The claim follows from Proposition~\ref{prop:PGLpermute} for $d=9$, from Proposition~\ref{prop:delPezzo_4d8} for $4\leq d\leq 8$, from Proposition~\ref{prop:delPezzo_d23} for $d=2,3$, and from Proposition~\ref{prop:delPezzo_d1} for $d=1$.
\end{proof}

%----------Birational self-maps of finite order
\subsection{Birational self-maps of finite order}
\label{subsect:finiteOrder}

\begin{lemma}\label{lem:finiteorder}
Let $k$ be a perfect field.
Suppose $G\subset\Cr_2(k)$ is a finite subgroup.
Then there exists a surface $X$ together with a birational map $\phi\colon X\dashrightarrow\bP^2$ such that there is an injective homomorphism
\begin{equation}
\label{eqn:G-to-Aut}
    \phi^\ast\colon G \hookrightarrow \Aut(X)
    : g\to\phi^{-1}g\phi.
\end{equation}
Moreover, $X$ can be minimal with respect to $G$ in the sense that
\begin{enumerate}[label = \textup{(\arabic*)}]
    \item\label{minimalConic}
    $X$ admits a structure of a conic bundle with $\Pic(X)^G\cong\bZ^2$, or
    \item\label{minimalDelPezzo}
    $X$ is isomorphic to a del Pezzo surface with $\Pic(X)^G\cong\bZ$.
\end{enumerate}
\end{lemma}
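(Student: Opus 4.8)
The plan is to construct $X$ in two stages: first \emph{regularize} the $G$-action on $\bP^2$, producing a smooth projective rational surface on which $G$ acts biregularly, and then run a $G$-equivariant minimal model program to reach one of the two asserted minimal forms.

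For the regularization, note that $G\subseteq\Cr_2(k)$ acts faithfully on the function field $F\colonequals k(\bP^2)$ by $k$-automorphisms, so $F/F^G$ is Galois with group $G$ by Artin's theorem. Let $U\subseteq\bP^2$ be the intersection of the domains of definition of all $g$ and $g^{-1}$ with $g\in G$; it is a $G$-stable quasi-projective open set on which $G$ acts by automorphisms, so the quotient $U/G$ exists as a quasi-projective variety. Fix a normal projective model $V$ of $U/G$ and let $W$ be the normalization of $V$ in $F$; then $W$ is projective, birational to $\bP^2$, and $G=\Gal(F/F^G)$ acts on it by automorphisms by functoriality of normalization. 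Since $k$ is perfect, the minimal desingularization $X_0\to W$ is canonical, hence $G$-equivariant, and $X_0$ is a smooth projective rational surface with $G\subseteq\Aut(X_0)$. Writing $\phi_0\colon X_0\dashrightarrow\bP^2$ for the induced birational map, the assignment $g\mapsto\phi_0^{-1}g\phi_0$ recovers the biregular $G$-action on $X_0$, which gives the injection \eqref{eqn:G-to-Aut} for $X=X_0$.

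It remains to pass to a $G$-minimal model. I would work with the geometric Picard group of $X_0$ equipped with the commuting actions of $G$ and of $\Gamma\colonequals\Gal(\bar k/k)$, and run the minimal model program for rational surfaces $(G\times\Gamma)$-equivariantly, as in \cite{Man86}*{Ch.~IV} and the standard theory of $k$-minimal and $G$-minimal rational surfaces. Whenever a $(G\times\Gamma)$-invariant $K_{X_0}$-negative extremal ray exists, its contraction is defined over $k$ and $G$-equivariant, and it either blows down a $(G\times\Gamma)$-orbit of pairwise disjoint $(-1)$-curves or produces a fibration. Because $X_0$ is rational, $K_{X_0}$ is not pseudo-effective, so the process cannot terminate at a surface with nef canonical class; as the Picard rank strictly decreases at every birational contraction, after finitely many steps it terminates at a $G$-equivariant Mori fiber space $X\to B$ over $k$ with $(G\times\Gamma)$-invariant relative Picard rank one. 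If $B$ is a point, then $X$ is a del Pezzo surface with $\Pic(X)^G\cong\bZ$. If $\dim B=1$, then $X$ has a $k$-point (being $k$-rational), whose image shows $B\cong\bP^1$, and $X\to\bP^1$ is a conic bundle with $\Pic(X)^G\cong\bZ^2$. Composing $\phi_0$ with the birational map $X\dashrightarrow X_0$ gives the desired $\phi\colon X\dashrightarrow\bP^2$, and the homomorphism \eqref{eqn:G-to-Aut} persists.

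The main obstacle is carrying out both steps over an arbitrary perfect field rather than over $\bC$: one must track the Galois action alongside the $G$-action throughout, run the MMP in the category of smooth projective $k$-surfaces equipped with a $G$-action (equivalently, $(G\times\Gamma)$-equivariantly on the geometric model), and verify that each contracted extremal ray is genuinely defined over $k$ and that the contracted configurations are orbits of disjoint $(-1)$-curves. The rationality of $\bP^2$ is what guarantees that $K_{X_0}$ is not pseudo-effective, hence that the MMP terminates in a Mori fiber space, and the two cases of the statement are exactly the two possible relative dimensions, $0$ and $1$, of that fibration.
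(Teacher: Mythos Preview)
Your proposal is correct and follows essentially the same two-step strategy as the paper: regularize the $G$-action to obtain a smooth projective model with $G\subseteq\Aut(X)$, then run a $G$-equivariant minimal model program to terminate at one of the two listed types. The paper's own proof is terser, deferring the regularization step to the argument in \cite{DI09}*{Lemma~3.5} and the classification of $G$-minimal outcomes to \cite{Isk79}*{Theorem~1G}, whereas you spell out the regularization via normalization in the function field and the MMP analysis explicitly; but the underlying argument is the same.
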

\begin{proof}
The first statement can be proved by the same argument as in \cite{DI09}*{Lemma~3.5}. Now consider $G$ as a subgroup of $\Aut(X)$. Assume that $X$ is not minimal with respect to $G$, i.e., there exists a surface $Y$ and a birational morphism $h\colon X\to Y$ together with an inclusion
$$
    h^\ast\colon G \hookrightarrow \Aut(Y)
    : g\to h^{-1}gh
$$
such that the rank of $\Pic(Y)^G$ is strictly less than the rank of $\Pic(X)^G$.
This process terminates at either \ref{minimalConic} or \ref{minimalDelPezzo} by \cite{Isk79}*{Theorem~1G}.
\end{proof}

As a corollary, given $f\in\BCr_2(k)$ of finite order, we can always conjugate it to an automorphism on a minimal surface.
This reduces the parity problem for such elements to the problem on the parities induced by the automorphisms on a conic bundle or a del Pezzo surface.

\begin{proof}[Proof of Theorem~\ref{thm:evenPermutationsIntro}]
The statement for birational permutations on a conic bundle over $\bP^1$ follows from Corollary~\ref{cor:conicBundle_v2}. The statement for automorphisms of del Pezzo surfaces follows from Theorem~\ref{thm:delPezzo}. For birational permutations conjugate to maps of the previous two types, we apply Theorem~\ref{thm:parityBirMapIntro}. Note that this covers the elements in $\BCr_2(\bF_q)$ of finite order due to Lemma~\ref{lem:finiteorder}.
\end{proof}

%--------------------Non-existence of odd permutations
\section{Non-existence of odd permutations}
\label{sect:no-odd-permutation}

In this section, we produce a list of generators for $\BCr_2(k)$ where $k$ is a perfect field. Then we conclude the proof of Theorem~\ref{mainthm:intro} by showing that the generators in this list induce only even permutations over $k=\bF_{2^m}$ for $m\geq 2$. Throughout this section, we say a smooth zero-dimensional subscheme of a del Pezzo surface $X$ (resp. a conic bundle $X$) is \emph{in general position} if the blow-up of $X$ at the subscheme is still a del Pezzo surface (resp. a conic bundle over the same base).

%----------A list of generators over perfect fields
\subsection{A list of generators over perfect fields}
\label{subsect:generators}

\begin{lemma}
\label{lem:Schneider}
Let $k=\bF_{q}$ for $q=p^m$, where $p\geq2$ is a prime and $m\geq1$. 
\begin{enumerate}[label=\textup{(\arabic*)}]
\item\label{Schneider:degree2}
Let $p,p',q,q'$ be four points of degree $2$ in $\bP^2$ in general position. Then there exists $A\in\Aut(\bP^2)$ that sends $p,p'$ onto $q,q'$.
\item\label{Schneider:degree4}
Let $p,q$ be two points of degree $4$ in $\bP^2$ in general position. Then there exists $A\in\Aut(\bP^2)$ that sends $p$ onto $q$.
\end{enumerate}
\end{lemma}
\begin{proof}
To prove \ref{Schneider:degree2}, let $p_1,p_2$ (resp. $p_1',p_2'$, resp. $q_1,q_2$ resp. $q_1',q_2'$) be the geometric components of $p$ (resp. $p'$ resp. $q$ resp. $q'$). Then each  $p_i,p_i',q_i,q_i'$ is defined over $\bF_{q^2}$, $i=1,2$, and there  exists a unique $\bF_{q^2}$-automorphism $A$ of $\bP^2$ that sends $p_i$ onto $q_i$ and $p_i'$ onto $q_i'$ for $i=1,2$. For any $g\in\Gal(\bF_{q^2}/\bF_q)$ we have
$$
    (A^gA^{-1})(q_i)
    = A^g((p_i^{g^{-1}})^g)
    = (Ap_i^{g^{-1}})^g
    = (q_i^{g^{-1}})^g
    = q_i.
$$
In particular, $A^gA^{-1}$ is the identity map for all $g\in\Gal(\bF_{q^2}/\bF_q)$. Hence $A$ is defined over $\bF_q$. 

To prove \ref{Schneider:degree4}, let $p_1,p_2,p_3,p_4$ (resp. $q_1,q_2,q_3,q_4$) its geometric components of $p$ (resp. $q$). Then each $p_i$ and $q_i$ is defined over $\bF_{q^4}$, $i=1,2$, and over $\bF_{q^2}$, $p$ (resp. $q$) splits into two orbits, say $\{p_1,p_2\}$ and $\{p_3,p_4\}$ (resp. $\{q_1,q_2\}$ and $\{q_3,q_4\}$). By (1), there exists a $\bF_{q^2}$-automorphism $A$ of $\bP^2$ that sends $p_i$ onto $q_i$, $i=1,\dots,4$. As analogously to above, we obtain that $A^gA^{-1}q_i=(Ap_i^{g^{-1}})^g=q_i$ for any $g\in\Gal(\bF_{q^2}/\bF_q)$ and for $i=1,\dots,4$; hence $A$ is defined over $\bF_q$.
\end{proof}

%\begin{definition}
Let $S$ be a smooth projective surface over a perfect field $k$, $B$ a point or a curve defined over $k$, and $\pi\colon S\to B$ a surjective morphism over $k$. We say that $S/B$ is a \emph{Mori fibre surface} if $\pi$ has connected fibres, the relative Picard rank $\rho(S/B)$ of $S$ over $B$ is $\rho(S/B)=1$ and $-K_S$ is $\pi$-ample, that is $-K_S\cdot C>0$ for all curves $C$ contracted by $\pi$. A \emph{Sarkisov link} is a birational map $\phi\colon S\dashrightarrow S'$ between two Mori fibre spaces $\pi\colon S\rightarrow B$ and $\pi'\colon S'\rightarrow B'$ that is one of the following four types:
\begin{enumerate}[label=\textbf{Type~\Roman*.}, wide]
\item $B$ is a point, $B'$ is a curve and $\varphi$ is the blow-up of a point. 

\item $B\simeq B'$, and $\phi=\eta_2\eta_1$, where $\eta_1$ is the blow-up of a point $p=\{p_1,\dots,p_d\}$ of degree $d$ with those $p_i$ in general position, and $\eta_2$ is the contraction of an orbit of $(-1)$-curves of size $e$. We write $\phi=f_{de}$ if we want to emphasize the degree of the base-point of $\phi$.

\item the inverse of a link of type I, i.e.  $B$ is a curve, $B'$ is a point and $\phi$ is the contraction of a Galois-orbit of disjoint $(-1)$-curves defined over the algberaic closure of $k$.

\item $S=S'$ and $B,B'$ are both curves. If $S$ is rational, then $B=B'\simeq\bP^1$ and the $\phi$ is the exchange of the two fibrations.
\end{enumerate}

\begin{prop}
\label{pro:decomposition_links}
Let $X\to B$ and $X'\to B'$ be Mori fibre surfaces and $\psi\colon X\dashrightarrow X'$ a birational map. Then there is a decomposition $\psi=\phi_r\cdots\phi_1$ into Sarkisov links and isomorphism of Mori fibre surfaces such that
\begin{enumerate}[label=\textup{(\arabic*)}]
\item\label{decomposition_links:1} for $i=1,\dots,r-1$, $\phi_{i+1}\phi_i$ is not an automorphism,
\item\label{decomposition_links:2} for $i=1,\dots,r$, every base-point of $\phi_i$ is a base-point of $\phi_r\cdots\phi_i$.
\end{enumerate}
\end{prop}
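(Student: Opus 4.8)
The plan is to produce the decomposition in two stages: first obtain \emph{some} factorization of $\psi$ into Sarkisov links, and then repeatedly modify it until conditions \ref{decomposition_links:1} and \ref{decomposition_links:2} both hold. For the first stage one invokes the Sarkisov program for surfaces over a perfect field (Iskovskikh; compare the discussion around the generators of $\Cr_2(k)$ in \cite{Isk91}), which guarantees that the birational map $\psi$ between the Mori fibre surfaces $X\to B$ and $X'\to B'$ can be written as $\psi=\phi_r\cdots\phi_1$, a composition of Sarkisov links and isomorphisms of Mori fibre surfaces. I would fix such a decomposition and attach to it the complexity $(r,\;\sum_{i}\#\Bs(\phi_i))$, where base-points are counted in the bubble space (infinitely near points included), ordered lexicographically; the whole argument then consists of showing that a decomposition of minimal complexity automatically satisfies \ref{decomposition_links:1} and \ref{decomposition_links:2}.

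For \ref{decomposition_links:1} the mechanism is elementary cancellation: if some $\phi_{i+1}\phi_i$ is an isomorphism of Mori fibre surfaces, replace the pair $(\phi_i,\phi_{i+1})$ by the single isomorphism $\phi_{i+1}\phi_i$ (deleting it when it is the identity), and merge two consecutive isomorphisms whenever they occur. This leaves $\psi$ unchanged but strictly lowers $r$, so after finitely many steps no such pair remains; I would perform this reduction first and call the resulting decomposition \emph{reduced}.

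Condition \ref{decomposition_links:2} is the substantial part. Assuming the decomposition is reduced and of minimal complexity, I would argue by contradiction: suppose some $\phi_i$ has a base-point $p$ that is not a base-point of $g_i:=\phi_r\cdots\phi_i$, and choose $i$ maximal with this property. Since $p\in\Bs(\phi_i)\setminus\Bs(g_i)$, the divisor $E$ extracted by $\phi_i$ over $p$ must be contracted by $g_{i+1}:=\phi_r\cdots\phi_{i+1}$; as each Sarkisov link contracts only finitely many $(-1)$-curves and their Galois orbits, there is a smallest $j>i$ whose link $\phi_j$ contracts the strict transform of $E$. Using the classification of links on surfaces (types I--IV, with type II links of the explicit shape $f_{de}$) together with the maximality of $i$, one should be able to show that $E$ is untouched by the intermediate links $\phi_{i+1},\dots,\phi_{j-1}$ and that these commute past $\phi_i$, i.e.\ $\phi_{j-1}\cdots\phi_{i+1}\phi_i=\phi_i'\,\phi_{j-1}'\cdots\phi_{i+1}'$ for suitable iso-conjugate links $\phi_\bullet'$. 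Then $\phi_j$ is immediately preceded by the link $\phi_i'$ that extracted $E$, and a local computation shows that $\phi_j\phi_i'$ is either an isomorphism of Mori fibre surfaces or a single link carrying strictly fewer base-points than $\phi_j$ and $\phi_i'$ together. Re-reducing if necessary, this yields a reduced decomposition of $\psi$ of strictly smaller complexity, contradicting minimality; hence \ref{decomposition_links:2} holds.

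The main obstacle is precisely the geometric bookkeeping packed into that last step: making rigorous that $E$ is ``untouched'' by $\phi_{i+1},\dots,\phi_{j-1}$, that those links genuinely commute with $\phi_i$ up to isomorphism of Mori fibre surfaces, and that the cancellation $\phi_j\phi_i'$ behaves as claimed. This forces one to track how each Sarkisov link rearranges the configuration of $(-1)$-curves on the relevant surfaces, and how infinitely near base-points together with the degrees of the closed points involved are transformed under the transpositions — the same case analysis that underlies Iskovskikh's treatment of the surface Sarkisov program in \cite{Isk91}, which I would follow closely.
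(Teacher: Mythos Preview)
The paper does not give a self-contained proof here: it simply cites \cite{Isk96}*{Theorem~2.5} and \cite{BM14}*{Proposition~2.7}. The point is that those references do \emph{not} take an arbitrary Sarkisov factorization and then massage it into shape, as you propose; rather, the Sarkisov algorithm itself, as run by Iskovskikh, produces the decomposition directly with property~\ref{decomposition_links:2} built in. At each step of the program one untwists a maximal base-point of the \emph{remaining} map $\phi_r\cdots\phi_i$, so by construction the base-point of $\phi_i$ is already a base-point of $\phi_r\cdots\phi_i$. Property~\ref{decomposition_links:1} likewise comes out of the algorithm, because each link strictly decreases the Sarkisov degree (or an auxiliary invariant), so no two consecutive links can compose to an automorphism.

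Your post-processing strategy is conceptually appealing but has a genuine gap exactly where you flag the ``main obstacle'': the commutation step. You assert that if $E$ is extracted by $\phi_i$ and contracted by $\phi_j$ with $j>i$, then the intermediate links $\phi_{i+1},\dots,\phi_{j-1}$ leave $E$ untouched and can be slid past $\phi_i$. Neither claim is automatic. A link of type~II between conic bundles, for instance, can change which $(-1)$-curves are present without literally blowing up or down $E$, and there is no general ``elementary transformation'' rule that lets you transpose two Sarkisov links while preserving the Mori fibre space structure at every stage. Making this rigorous would essentially require redoing the case analysis of the Sarkisov program, at which point it is both shorter and cleaner to invoke the algorithm directly and observe that \ref{decomposition_links:2} is a feature of the construction rather than something to be enforced afterwards.
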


\begin{proof}
The claim follows from the proof of \cite{Isk96}*{Theorem~2.5}, see also \cite{BM14}*{Proposition~2.7}.
\end{proof}

\begin{rmk}
\label{rmk:decomposition_links}
In particular, if $\psi$ induces a map $X(k)\to X'(k)$, then the link $\phi_1$ does not have any rational base-points. Moreover, the rational base-points of
$
    \psi(\phi_1)^{-1}=\phi_r\cdots\phi_2
$
are exactly the base-points of $(\phi_1)^{-1}$. Since $\phi_2\phi_1$ is not an automorphism, $\phi_2$ does not have a rational base-point.
\end{rmk}

The proof of the following proposition is similar to the proof of \cite{BM14}*{Theorem~1.2}, which shows that $\BCr_2(\bR)$ is generated by $\Aut(\bP^2)$ and elements of $\BCr_2(\bR)$ of degree $5$; the latter are in family \ref{generating_set:1} and they are the only non-linear maps in the generating set from Lemma~\ref{lem:generating_set_BCr} that exist over $k=\bR$. 

A surface $X_d,X_d'$ denote del Pezzo surfaces of degree $d$ and $Q,Q'$ del Pezzo surfaces of degree $8$ with $\rho(Q)=\rho(Q')=1$.

\begin{lemma}
\label{lem:generating_set_BCr}
Let $k$ be a perfect field. Then $\BCr_2(k)$ is generated by $\Aut(\bP^2)$ and the set of elements $f$ in the list below that exist over $k$.
\begin{enumerate}[label=\textup{(\arabic*)}]
\item\label{generating_set:1} $f$ sends the pencil of conics passing through two points of degree $2$ in general position onto a pencil of conics passing through two points of degree $2$ in general position.\\
If $k$ is finite, we can choose the two pencils to pass through the same points.
\item\label{generating_set:2} $f$ sends the pencil of conics passing through one point of degree $4$ in general position onto a pencil of conics passing through a point of degree $4$ in general position.\\
If $k$ is finite, we can choose the two pencils to pass through the same points.
\item\label{generating_set:3} $f$ is one of the following compositions, where $X_d$ is a del Pezzo surface of degree $d=(K_{X_d})^2$ and $f_{ab}$ is a Sarkisov link of type II blowing up a point of degree $a$ and its inverse blowing up a point of degree $b$:
\begin{equation}\label{gs:1}\begin{tikzcd}[column sep=0.2cm,row sep=0.2cm]
&X_6\ar[dl]\ar[dr] &&& X_2\ar[dl]\ar[dr] &&&X_1\ar[dl]\ar[dr] &&& X_3\ar[dl]\ar[dr]& \\ 
\bP^2\ar[rr,"f_{33}",dashed]&&\bP^2 & \bP^2\ar[rr,"f_{77}",dashed]&&\bP^2 & \bP^2\ar[rr,"f_{88}",dashed]&&\bP^2 &
\bP^2\ar[rr,"f_{66}",dashed]&&\bP^2
\end{tikzcd}
\end{equation}
or
\begin{equation}
\label{gs:2}
\begin{tikzcd}[column sep=0.2cm,row sep=0.2cm]
& X_7\ar[dl,swap]\ar[dr,"p"]&&X_{8-d}\ar[dl]\ar[dr]&&X_7\ar[ld,"p'",swap]\ar[dr] \\ 
 \bP^2\ar[rr,"f_{21}",dashed]&&Q\ar[rr,dashed,"f_{dd}"]&&Q\ar[rr,"f_{12} ",dashed]&&\bP^2 
\end{tikzcd}
\begin{tikzcd}[column sep=0.2cm,row sep=0.2cm]
\quad d\in\{7,6\}\\ p'=f_{dd}(p)
\end{tikzcd}
\end{equation}
or
\begin{equation}\label{gs:3}\begin{tikzcd}[column sep=0.2cm,row sep=0.2cm]
& X_7\ar[dl,swap]\ar[dr,"p"]&&X_3\ar[dl]\ar[dr]&&X_{5-d}\ar[ld]\ar[dr]&&X_3\ar[dl]\ar[dr]&&X_7\ar[dl,"p'",swap]\ar[dr] &\\ 
\bP^2\ar[rr,"f_{21}",dashed]&&Q\ar[rr,dashed,"f_{52}"]&&X_5\ar[rr,"f_{dd}",dashed]&&X_5\ar[rr,"f_{52}^{-1}",dashed]&&Q\ar[rr,"f_{12} ",dashed]&&\bP^2 
\end{tikzcd}
\begin{tikzcd}[column sep=0.2cm,row sep=0.2cm]
\quad d\in\{3,4\}\\ p'=f_{52}^{-1}f_{dd}f_{52}(p)
\end{tikzcd}
\end{equation}
or
\begin{equation}\label{gs:4}\begin{tikzcd}[column sep=0.2cm,row sep=0.2cm]
& X_7\ar[dl,swap]\ar[dr,"p"]&&X_3\ar[dl]\ar[dr]&&X_4\ar[ld,"p'",swap]\ar[dr]&\\ 
\bP^2\ar[rr,"f_{21}",dashed]&&Q\ar[rr,dashed,"f_{52}"]&&X_5\ar[rr,"f_{15}",dashed]&&\bP^2 
\end{tikzcd}
\begin{tikzcd}[column sep=0.2cm,row sep=0.2cm]
\quad p'=f_{52}(p)
\end{tikzcd}
\end{equation}
or
\begin{equation}\label{gs:5}\begin{tikzcd}[column sep=0.2cm,row sep=0.2cm]
& X_7\ar[dl,swap]\ar[dr,"p"]&&X_3\ar[dl]\ar[dr]&&X_3'\ar[dl]\ar[dr]&&X_7'\ar[dl,"p'",swap]\ar[dr]&\\ 
\bP^2\ar[rr,"f_{21}",dashed]&&Q\ar[rr,dashed,"f_{52}"]&&X_5\ar[rr,"f_{25}",dashed]&&Q' \ar[rr,"f_{12}",dashed]&&\bP^2
\end{tikzcd}
\begin{tikzcd}[column sep=0.2cm,row sep=0.2cm]
\quad p'=f_{25}f_{52}(p)
\end{tikzcd}
\end{equation}
or
\begin{equation}\label{gss:6}\begin{tikzcd}[column sep=0.2cm,row sep=0.2cm]
& X_7\ar[dl,swap]\ar[dr,"p"]&&X_5\ar[dl]\ar[dr,"t"]&&&X_5'\ar[lld,"p'",swap]\ar[dr]&&X_7'\ar[dl,"t'",swap]\ar[dr] &\\ 
\bP^2\ar[rr,"f_{21}",dashed]&&Q\ar[rr,dashed,"f_{31}"]&&X_6\ar[rrr,"f_{13}",dashed]&&&Q'\ar[rr,"f_{12} ",dashed]&&\bP^2 
\end{tikzcd}
\begin{tikzcd}[column sep=0.2cm,row sep=0.2cm]
p'=f_{31}(p)\\ t'=f_{13}(t)
\end{tikzcd}
\end{equation}
or
\begin{equation}\label{gss:7}\begin{tikzcd}[column sep=0.2cm,row sep=0.2cm]
& X_7\ar[dl,swap]\ar[dr,"p"]&&X_5\ar[dl]\ar[dr,"r"]&&X_{6-d}\ar[ld]\ar[dr]&&X_5'\ar[dl,"f_{dd}(r)",swap]\ar[dr]&&X_7'\ar[dl,"p' ",swap]\ar[dr] &\\ 
\bP^2\ar[rr,"f_{21}",dashed]&&Q\ar[rr,dashed,"f_{31}"]&&X_6\ar[rr,"f_{dd}",dashed]&&X_6'\ar[rr,"f_{13}",dashed]&&Q'\ar[rr,"f_{12} ",dashed]&&\bP^2 
\end{tikzcd}
\begin{tikzcd}[column sep=0.2cm,row sep=0.2cm]
d\in\{2,3,4,5\}\\ p'=f_{13}f_{dd}f_{31}(p)
\end{tikzcd}
\end{equation}
or
\begin{equation}\label{gss:8}\begin{tikzcd}[column sep=0.2cm,row sep=0.2cm]
& X_4'\ar[dl,swap]\ar[dr,"p"]&&X_{5-d}\ar[dl]\ar[dr]&&X_4'\ar[ld,"p'",swap]\ar[dr]&\\ 
\bP^2\ar[rr,"f_{51}",dashed]&&X_5\ar[rr,dashed,"f_{dd}"]&&X_5\ar[rr,"f_{15}",dashed]&&\bP^2 
\end{tikzcd}
\begin{tikzcd}[column sep=0.2cm,row sep=0.2cm]
\quad d\in\{4,3\}\\ p'=f_{dd}(p)
\end{tikzcd}
\end{equation}
Moreover, all links of the form $f_{dd}$ can be chosen to be involutions, except possibly $f_{66}$ in (\ref{gs:1}), $f_{33}$ and $f_{22}$ in (\ref{gss:7}).
\end{enumerate} 
\end{lemma}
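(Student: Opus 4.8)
The plan is to follow the strategy of the proof of \cite{BM14}*{Theorem~1.2} (which treats $k=\bR$, where the only non-linear generators are the degree-$5$ maps of family~\ref{generating_set:1}) and adapt it to an arbitrary perfect field using the Sarkisov theory recorded in Proposition~\ref{pro:decomposition_links}. Let $f\in\BCr_2(k)$ not lie in $\Aut(\bP^2)$. Applying Proposition~\ref{pro:decomposition_links} to $\psi=f$ gives a decomposition $f=\phi_r\cdots\phi_1$ into Sarkisov links and isomorphisms of Mori fibre surfaces, with $X_0=X_r=\bP^2$, satisfying \ref{decomposition_links:1} and \ref{decomposition_links:2}. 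By Remark~\ref{rmk:decomposition_links} the links $\phi_1$ and $\phi_2$ have no rational base-point, and applying the same remark to $f^{-1}$ the inverses of $\phi_{r-1}$ and $\phi_r$ have no rational base-point. The plan is then to show, by induction on $r$ (equivalently on $\deg f$), that such a reduced decomposition is, up to pre- and post-composition with elements of $\Aut(\bP^2)$, a concatenation of the ``blocks'' listed in \ref{generating_set:1}--\ref{generating_set:3}, each of which is checked to lie in $\BCr_2(k)$; peeling off the first block strictly shortens the decomposition of $f$ and the induction closes.

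The core of the argument is a classification, carried out exactly as in \cite{BM14}. First I would enumerate the Sarkisov links $\phi\colon X\dashrightarrow X'$ between rational Mori fibre surfaces over $k$ that have no rational base-point, using the classification of rational Mori fibre surfaces of small relative Picard rank ($\bP^2$, the degree-$8$ del Pezzo surfaces $Q$ of Picard rank one, del Pezzo surfaces of Picard rank one and degree $\le 6$, and conic bundles over $\bP^1$) together with Iskovskikh's list of links between them, restricted by the requirement that the blown-up point have degree $\ge 2$; this leaves a short list of admissible links (the self-links $f_{dd}$ on $\bP^2$, $Q$, $X_5$, $X_6$, the ``bookend'' links $f_{21}$, $f_{51}$ and their inverses, the mixed links $f_{52}$, $f_{15}$, $f_{31}$, $f_{13}$, $f_{25}$, and the type~I/III contractions $X_7\to\bP^2$, $X_7\to Q$, $X_5\to X_6$, \dots). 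Next I would determine which chains $\bP^2=X_0\dashrightarrow\cdots\dashrightarrow X_r=\bP^2$ of such links can occur: condition \ref{decomposition_links:1} forbids two consecutive links composing to an isomorphism, condition \ref{decomposition_links:2} forces base-points to survive along the chain, and Remark~\ref{rmk:decomposition_links} controls where rational base-points may sit; combined with the bookkeeping of which surface types follow which, this pins the chains down to exactly the patterns of diagrams \eqref{gs:1}--\eqref{gss:8}. Finally, leftover isomorphisms are absorbed using Lemma~\ref{lem:Schneider}: part~(1) moves any pair of degree-$2$ points in general position to any other and part~(2) does the same for degree-$4$ points, which is precisely what lets one, over a finite field, take the two pencils in families~\ref{generating_set:1} and~\ref{generating_set:2} through the same points and present the chains of~\ref{generating_set:3} with the stated identifications $p'=f_{dd}(p)$, etc. I expect this enumeration of admissible links and admissible chains, and matching them with the eight diagram families, to be the main obstacle --- not conceptually deep, but long and easy to leave incomplete, which is why the analogous passage dominates \cite{BM14}.

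For the last assertion, I would run through each self-link $f_{dd}$ appearing in~\ref{generating_set:3} and ask whether the choices in its construction can be arranged so that $f_{dd}^2=\mathrm{id}$. Writing $Y=\Bl_pX$ for the middle surface of the link --- a del Pezzo surface of degree $\deg X-d$ --- the link $f_{dd}$ is conjugate to an involution exactly when $Y$ carries an involutive automorphism, defined over $k$, interchanging the two extremal Galois-orbits of $(-1)$-curves that the two sides of the link contract. Such an involution exists when $Y$ has degree $6$, $2$, or $1$: it is respectively the Cremona involution of a degree-$6$ del Pezzo surface, the Geiser involution of a degree-$2$ one, and the Bertini involution of a degree-$1$ one, and in those cases one exhibits it explicitly by embedding $Y$ via $|-K_Y|$ or a suitable multiple so that the involution becomes linear. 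This covers $f_{33}$, $f_{77}$ and $f_{88}$ in \eqref{gs:1} (middle degrees $6$, $2$, $1$), all $f_{dd}$ occurring in \eqref{gs:2}--\eqref{gss:6}, the links $f_{dd}$ on $X_5$ in \eqref{gss:8} (middle degrees $2$ and $1$), and the links $f_{44}$, $f_{55}$ on $X_6$ in \eqref{gss:7} (middle degrees $2$ and $1$). The construction fails, and no canonical involution is available, exactly when $Y$ has degree $3$ or $4$: this is the case for $f_{66}$ in \eqref{gs:1} ($Y$ a cubic surface), for $f_{33}$ in \eqref{gss:7} ($Y$ a cubic surface), and for $f_{22}$ in \eqref{gss:7} ($Y$ a del Pezzo surface of degree $4$), which are exactly the three exceptions in the statement.
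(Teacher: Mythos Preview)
Your plan is essentially the paper's own proof: decompose $f$ into Sarkisov links via Proposition~\ref{pro:decomposition_links}, induct on the length $r$, and at each step use Iskovskikh's classification of links (restricted to non-rational base-points by Remark~\ref{rmk:decomposition_links}) to peel off one of the blocks \eqref{gs:1}--\eqref{gss:8}, with Lemma~\ref{lem:Schneider} handling the finite-field refinement in \ref{generating_set:1}--\ref{generating_set:2}. Your involution analysis via the degree of the middle del Pezzo surface matches the paper's reasoning (which defers to \cite{Isk96}*{Theorem~2.6}); just be aware that the actual case analysis---especially the sub-cases after $\phi_1=f_{21}$ where one must track $\phi_2,\phi_3$ and insert auxiliary links $f_{12}$, $f_{13}$ to close each block while keeping the residual decomposition in the form of Proposition~\ref{pro:decomposition_links}---is longer and more delicate than your sketch suggests.
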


Since the proof of Lemma~\ref{lem:generating_set_BCr} is quite long, we will check afterwards in Lemma~\ref{lem:redundant_generators} that the generators (\ref{gs:5}) and (\ref{gss:6}), (\ref{gss:7}, $d=2$) and (\ref{gss:8}, $d=4$) are redundant.

\begin{proof}
First note that any element in \ref{generating_set:3} is contained in $\BCr_2(k)$ as they only contract curves not defined over the ground field $k$. The list of involutions is from \cite{Isk96}*{Theorem 2.6}. For \ref{generating_set:1} and \ref{generating_set:2}, the claim over a finite field $k$ follows from Lemma~\ref{lem:Schneider}.

Let $\psi\in\BCr_2(k)$. There is a decomposition into Sarkisov links $\psi=\phi_r\cdots\phi_1$ as in Proposition~\ref{pro:decomposition_links}. We do induction on $r$, the case $r=0$ corresponding to $\psi\in\Aut(\bP^2)$. Let $r\geq1$. Then $\phi_1$ is a link of type I or II, and its base-point is a base-point of $\psi$, so is of degree $\geq2$. By \cite{Isk96}*{Theorem~2.6(i,ii)}, $\phi_1$ a link of type I with a base-point of degree $4$ or a link of type II of the form $f_{88},f_{77},f_{66},f_{33},f_{21}$ or $f_{51}$. We are going to look at these cases separately.

\begin{enumerate}[wide, labelwidth=!]
\item[\bf(a)] If $\phi_1:\bP^2\dashrightarrow X$ is a link of type~I, then it is the the blow-up of a point of degree $d_1=4$; $X/\bP^1$ is a conic bundle whose fibres are the strict transforms of conics through the four points, and $K_X^2=5$. Now $\phi_2$ is either a link of type~II of conic bundles, a link of type III \cite{Isk96}*{Theorem~2.6(i-iv)}, or an isomorphism. As $\phi_2\phi_1\notin\Aut(\bP^2)$ by hypothesis (see Proposition~\ref{pro:decomposition_links}~\ref{decomposition_links:1}), $\phi_2$ is a link of type~II of conic bundles or an isomorphism. Moreover, $\psi\phi_1^{-1}=\phi_r\cdots\phi_2$ is well-defined on $X(k)$, so $\phi_2$ is well-defined on $X(k)$ as well by Remark~\ref{rmk:decomposition_links}. Let $r-1\geq s\geq2$ be the maximal index such that $\phi_i$ is an isomorphism over $\bP^1$ or a link of type~II over $\bP^1$ without a rational base-point for any $2\leq i\leq s$. The map $\phi_s\cdots\phi_1$ is a birational map over $\bP^1$ from $X$ to a Mori fibre surface $X'/\bP^1$. We now look at two cases

If $\phi_{s+1}$ is a link of type III, then  $\nu':=\phi_{s+1}\phi_s\cdots\phi_2\phi_1$ is as in \ref{generating_set:2}. Note that $\psi\nu^{-1}=\phi_r\cdots\phi_{s+2}$ is as in Proposition~\ref{pro:decomposition_links}.

If $\phi_{s+1}$ is not a link of type III, then the map $\nu:=\phi_1^{-1}\phi_s\cdots\phi_2\phi_1\in\BCr_2(k)$ is as in \ref{generating_set:2} and the map $\psi\nu^{-1}=\phi_r\cdots\phi_{s+1}\phi_1$ is as in Proposition~\ref{pro:decomposition_links} since the base-point of $\phi_1$ is a base-point of $\phi_r\cdots\phi_{s+1}$ by construction.

\item[\bf(b)] Suppose that $\phi_1$ is a link of type~II, i.e. one of the forms $f_{33}$, $f_{66}$, $f_{77}$, $f_{88}$, $f_{21}$, or $f_{51}$. In the first four cases it is of the form (\ref{gs:1}) and we proceed by induction with $\psi\phi_1^{-1}=\phi_r\cdots\phi_2$. If $\phi_1$ is of the form $f_{21}$ (case {\bf(b1)}) or $f_{51}$ (case {\bf(b2)}), then $\phi_1^{-1}$ has a rational base-point $p$, which is the unique base-point of $\psi\phi_1^{-1}=\phi_r\cdots\phi_2$. Since $\phi_2\phi_1$ is not an automorphism by hypothesis, $p$ is not a base-point of $\phi_2$. Then $\phi_2(p)$ is the unique rational base-point of $\psi\phi_1^{-1}\phi_2^{-1}=\phi_r\cdots\phi_3$. It may or may not be a base-point of $\phi_3$. 

\begin{enumerate}[wide, labelwidth=!]
\item[\bf(b1)] Suppose that $\phi_1=f_{21}\colon\bP^2\dashrightarrow Q$. Then $\phi_2$ is a link of type I (case {\bf(b1.1)}) or II \cite{Isk96}*{Theorem~2.6}. If $\phi_2$ is a link of II, then it is of the form $f_{77},f_{66},f_{44}$ (case {\bf (b1.2)}) or $f_{52}$ (case {\bf(b1.3)}) or $f_{31}$ (case {\bf(b1.4)}) by \cite{Isk96}*{Theorem~2.6(ii)}.
The option $\phi_2=f_{12}$ does not occur since it forces $\phi_2\phi_1\in\Aut_k(\bP^2)$, which is not allowed by hypothesis. 

\item[\bf(b1.1)] Suppose that $\phi_2\colon Q\dashrightarrow X$ is a link of type I. Then it is the inverse of blowing-up a point $t$ of degree $2$ \cite{Isk96}*{Theorem~2.6(i)}. Then $K_X^2=6$ and $X\rightarrow\bP^1$ is a Mori fibre space whose fibres are the images by $\phi_2\phi_1$ of conics in $\bP^2$ passing through $p$ and $\phi_1^{-1}(t)$. Now, $\phi_3$ is an isomorphism or a link $\phi_3$ of type II or III. We will assume that $\phi_3$ is not an isomorphism, as otherwise we can assume that $\phi_4$ is not an isomorphism and continue the argument below with $\phi_4$ instead of $\phi_3$. 
Since $\phi_3\phi_2$ is not an automorphism by hypothesis, $\phi_3\colon X\dashrightarrow X'$ is a link of type II over $\bP^1$.

\item[(b1.1.i)] If $\phi_3$ has a rational base-point $q$, then $q=\phi_2(p)$, where $p$ is the base-point of $\phi_1^{-1}$, as it is the unique rational base-point of $\phi_r\cdots\phi_3$ by hypothesis, see (b). There exists a link $\phi_2'\colon X'\rightarrow Q'$ of type III to a quadric surface $Q'$. Let $q'\in X'$ be the base-point of $\phi_3^{-1}$. It is a rational point, so there exists a link $f_{12}\colon Q'\dashrightarrow\bP^2$ of type II with base-point $\phi_2'(q')$. The map $\nu := f_{12}\phi_2'\phi_3\phi_2\phi_1\in\BCr_2(k)$ sends the pencil of conics through $p,\phi_1^{-1}(t)$ onto the pencil of conics through the base-point of $f_{12}^{-1}$ and the image by $f_{12}$ of the base-point of $\phi_2^{-1}$, hence belongs to the family \ref{generating_set:1}. The map $\psi\nu^{-1}=\phi_r\cdots\phi_4\phi_2'f_{12}^{-1}$ is a decomposition as in Proposition~\ref{pro:decomposition_links} and we can proceed by induction.  

\item[(b1.1.ii)] Suppose that $\phi_3$ has no rational base-point.
Let $3\leq s\leq r-1$ be the maximal index such that $\phi_i$ is an isomorphism over $\bP^1$ or a link of type II with no rational base-points for all $3\leq i\leq s$ and consider the map $\phi_s\cdots\phi_3\colon X\dashrightarrow X'$. The map $\phi_{s+1}$ is a link of type III  or a link of type II with a rational base-point. If $\phi_{s+1}$ is a link of type II, we proceed as in (b1.1.i) with $\phi_{s+1}\phi_s\cdots\phi_3$ instead of $\phi_3$. If $\phi_{s+1}$ is a link of type III, then $\phi_{s+1}$ is a contraction $X'\rightarrow Q'$ to a quadric surface $Q'$. Recall from (b) that $\phi_2(p)$ is the unique rational base-point of $\phi_r\cdots\phi_3$, where $p$ is the base-point of $\phi_1^{-1}$. There exists a link $f_{12}\colon Q'\dashrightarrow\bP^2$ of type II with base-point $(\phi_{s+1}\phi_s\cdots\phi_3\phi_2)(p)$. The map $\nu:=f_{12}\phi_{s+1}\cdots\phi_1$ sends the pencil of conics through $p,\phi_1^{-1}(t)$ onto the pencil of conics through the base-point of $f_{12}^{-1}$ and the image by $f_{12}$ of the base-point of $\phi_{s+1}^{-1}$. We proceed as in (b1.1.i).

\item[\bf(b1.2)] 
If $\phi_2\in\{f_{77},f_{66}\}$, then $\phi_2$ is, up to an automorphism of $Q$, a birational involution of $Q$ \cite{Isk96}*{Theorem~2.6(ii)}. Recall from (b) that $\phi_1^{-1}$ has a rational base-point $p\in Q$, which is the unique rational base-point of $\phi_r\cdots\phi_2$. There exists a link $f_{12}\colon Q\dashrightarrow\bP^2$ of type II with base-point $\phi_2(p)$.
Then $f_{12}\phi_2\phi_1\in\BCr_2(k)$ and is as in (\ref{gs:2}). Furthermore, $\psi(f_{12}\phi_2\phi_1)^{-1}=\phi_r\cdots\phi_3f_{12}^{-1}$
is a decomposition as in Proposition~\ref{pro:decomposition_links} as the base-point of $f_{12}^{-1}$ is a base-point of $\phi_r\cdots\phi_3f_{12}^{-1}$ by construction.

If $\phi_2=f_{44}\colon Q\dashrightarrow Q'$, let $f_{12}\colon Q'\dashrightarrow\bP^2$ be the link of type II with $\phi_2(p)$ as base-point and $q,q'$ the base-point of $\phi_2,\phi_2^{-1}$, respectively. Then $f_{12}\phi_2\phi_1$ sends the pencil of conics through $\phi_1^{-1}(q)$ onto the pencil of conics through $f_{12}(q')$, so it is a member of \ref{generating_set:2}.

\item[\bf(b1.3)] Suppose that $\phi_2=f_{52}\colon Q\dashrightarrow X_5$, where $X_5$ is a del Pezzo surface of degree $5$. Then $\phi_3$ is one of $f_{33},f_{44},f_{15},f_{25}$ \cite{Isk96}*{Theorem~2.6}.

If $\phi_3\in\{f_{33},f_{44}\}$, then it is a birational self-map of $X_5$ \cite{Isk96}*{Theorem~2.6(ii)}. Let $f_{12}\colon Q\dashrightarrow\bP^2$ be a link of type II with base-point $(\phi_2^{-1}\phi_3\phi_2)(p)$, where $p$ is the (rational) base-point of $\phi_1^{-1}$ according to (b). Then $\nu:=f_{12}\phi_2^{-1}\phi_3\phi_2\phi_1$ is in the family (\ref{gs:3}) and $\psi\nu^{-1}=\phi_r\cdots\phi_4\phi_2 f_{12}^{-1}$ is a decomposition as in Propostion~\ref{pro:decomposition_links}.

If $\phi_3=f_{15}$, then its base-point is $q=\phi_2(p)$ by (b) and so $\phi_3\phi_2\phi_1$ is as in (\ref{gs:4}). 

If $\phi_3=f_{25}$, then it is a map to a quadric surface $Q'$. Let $f_{12}\colon Q'\dashrightarrow\bP^2$ be a link of type II whose base-point is $\phi_3\phi_2(p)$, where $p$ is the (rational) base-point of $\phi_1^{-1}$ according to (b). Then $f_{12}\phi_3\phi_2\phi_1\in\BCr_2(k)$ is as in (\ref{gs:5}), and $\psi(f_{12}\phi_3\phi_2\phi_1)^{-1}=\phi_r\cdots\phi_4 f_{12}^{-1}$ is a decomposition as in Proposition~\ref{pro:decomposition_links}. 

\item[\bf(b1.4)] If $\phi_2=f_{31}\colon Q\dashrightarrow X_6$, then  $\psi\phi_1^{-1}\phi_2^{-1}=\phi_r\cdots\phi_3$
has two rational base-points, namely $\phi_2(p)$ and the base-point $t$ of $\phi_2^{-1}$. Furthermore, $\phi_3$ is a link of type~II of the form $f_{55},f_{44},f_{33},f_{22}$ or $f_{13}$ or a link of type~III to a quadric surface \cite{Isk96}*{Theorem~2.6}. The latter forces $\phi_3\phi_2$ to be an automorphism, which contradicts our hypothesis, see Proposition~\ref{pro:decomposition_links}\ref{decomposition_links:1}.

%\item[(b1.4.i)] 
Suppose that $\phi_3=f_{13}\colon X_6\dashrightarrow Q'$ is a link to a quadric surface $Q'$. As $\psi\phi_1^{-1}\phi_2^{-1}=\phi_r\cdots\phi_3$ has exactly two rational base-points, namely $\phi_2(p)$ and $t$, and the base-point of $q$ of $\phi_3$ is a base-point of $\phi_r\cdots\phi_3$ by hypothesis (see Proposition~\ref{pro:decomposition_links}\ref{decomposition_links:2}), it follows that $q=\phi_2(p)$ or $q=t$. The latter forces $\phi_3\phi_2$ to be an automorphism, which contradicts our hypothesis (see Proposition~\ref{pro:decomposition_links}\ref{decomposition_links:1}), so $q=\phi_2(p)$. Let $f_{12}\colon Q'\dashrightarrow\bP^2$ be a link of type II with base-point $\phi_3\phi_2(t)$. Then $\nu:=f_{12}\phi_3\phi_2\phi_1$ is of the form (\ref{gss:6}) and $\psi\nu^{-1}=\phi_r\cdots\phi_4f_{12}^{-1}$ is as in Proposition~\ref{pro:decomposition_links}. 

%\item[(b1.4.ii)] 
Suppose that $\phi_3\colon X_6\dashrightarrow X_6'$ is one of $f_{55},f_{44},f_{33},f_{22}$. There is a link $f_{13}\colon X_6'\dashrightarrow Q'$ of type~II with base-point $\phi_3(t)$,
and $f_{12}\colon Q'\dashrightarrow\bP^2$ a link of type~II with base-point $f_{13}\phi_3\phi_2(p)$. Then $\nu:=f_{12}f_{13}\phi_3\cdots\phi_1$ is of the form (\ref{gss:7}) and $\psi\nu^{-1}=\phi_r\cdots\phi_4f_{13}^{-1}f_{12}^{-1}$
is a decomposition as in Proposition~\ref{pro:decomposition_links}. By \cite{Isk96}*{Theorem~2.6}, $f_{55}$ and $f_{44}$ can be taken to be birational involutions.

\item[\bf(b2)] Finally, suppose that $\phi_1=f_{51}\colon Q\dashrightarrow X_5$. Then, as $\phi_2$ has no rational base-point by (b), it is a link of type II and hence of the form $f_{44},f_{33},f_{25}$ \cite{Isk96}*{Theorem~2.6}. We proceed as in case (b1.3) with $\phi_2$ instead of $\phi_3$ and construct a map as in (\ref{gss:8}) if $\phi_2=f_{dd}$, $d=3,4$, or the inverse of a map of type (\ref{gs:4}) if $\phi_2=f_{25}$.
\qedhere
\end{enumerate}
\end{enumerate}
\end{proof}

\begin{lemma}
\label{lem:redundant_generators}
In the list in Lemma~\ref{lem:generating_set_BCr}, the generators (\ref{gs:5}) and (\ref{gss:6}), (\ref{gss:7}, $d=2$) and (\ref{gss:8}, $d=4$) are redundant.
\end{lemma}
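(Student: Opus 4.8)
The plan is to treat each of the four families separately and, in each case, to rewrite an arbitrary member $\nu$ explicitly as a composition of members of the remaining families of Lemma~\ref{lem:generating_set_BCr} together with automorphisms of $\bP^2$; this is exactly what is needed, since a family is redundant precisely when its deletion from the list still leaves a generating set of $\BCr_2(k)$.

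The engine of the argument is a rigidity statement read off from Iskovskikh's classification in \cite{Isk96}*{Theorem~2.6}: between two prescribed Mori fibre surfaces, a type~II Sarkisov link is determined, up to pre- and post-composition with automorphisms of the source and target, by the degree of its base point. Together with transitivity of the relevant automorphism groups on configurations of points in general position --- Lemma~\ref{lem:Schneider} for $\bP^2$, the fact that the automorphism group of the degree-$5$ del Pezzo surface over $\overline{k}$ is the symmetric group $S_5$, and the fact that $X_6$ is toric so that $\Aut(X_6)$ contains a $2$-dimensional torus acting transitively on an open subset --- this lets me recognize when two links are "the same", and in particular when a consecutive pair $\phi_{i+1}\phi_i$ in a chain, built out of a link and a link of the numerically inverse type, can (after reshuffling the automorphisms sitting inside the chain) be collapsed to an automorphism of the common surface or to a single shorter link. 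Each such collapse shortens the chain, and I then identify the shortened chain with a member of one of the retained families (or with an element of $\Aut(\bP^2)$).

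Concretely: in family (\ref{gs:5}) the middle pair $f_{25}\circ f_{52}\colon Q\dashrightarrow Q'$ is assembled from $f_{52}\colon Q\dashrightarrow X_5$ and a link $X_5\dashrightarrow Q'$ of the same numerical type as $f_{52}^{-1}$ (both blow up a point of degree $2$ and contract an orbit of $(-1)$-curves of cardinality $5$); after reshuffling automorphisms this pair becomes an isomorphism $Q\to Q'$ followed by a bounded amount of extra data, and using the constraint $p'=f_{25}f_{52}(p)$ to match the base points of the outer links $f_{21}$ and $f_{12}$ exhibits $\nu$ as an element of $\Aut(\bP^2)$ or of one of the families (\ref{generating_set:1}), (\ref{generating_set:2}), (\ref{gs:2}). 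Family (\ref{gss:6}) is handled in the same way with the pair $f_{13}\circ f_{31}\colon Q\dashrightarrow Q'$, since $f_{13}$ has the numerical type of $f_{31}^{-1}$. For (\ref{gss:7}) with $d=2$ the self-link $f_{22}\colon X_6\dashrightarrow X_6'$ factors through a del Pezzo surface of degree $4$, and conjugating it by $f_{13}$ (respectively $f_{31}$) turns it into a link between quadric surfaces that already occurs inside family (\ref{gs:2}), reducing the five-link chain to one on the list. For (\ref{gss:8}) with $d=4$ the self-link $f_{44}\colon X_5\dashrightarrow X_5$ (middle surface $X_1$), conjugated by $f_{15}$ (respectively by $f_{15}^{-1}$, which has the numerical type of $f_{51}$), becomes a link already produced by family (\ref{gs:3}) together with $\Aut(\bP^2)$, collapsing the chain.

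The hard part is not the strategy but the bookkeeping: at each step one must keep track of the base points and of the Galois orbits of $(-1)$-curves that are contracted, and check that, with the freedom afforded by the transitivity results, the relevant sub-chains really do collapse in full generality rather than only for base points in sufficiently general position. Degenerate configurations --- a base point lying on an exceptional locus of an earlier blow-up, or two base points colliding as one moves along the chain --- have to be examined case by case, and this is where the detailed list in \cite{Isk96}*{Theorem~2.6} is indispensable. I expect the $d=2$ case of (\ref{gss:7}) to be the most delicate, since there the non-involutive self-link $f_{22}$ sits in the middle of the longest chain, between two links $f_{31}$, $f_{13}$ that each carry their own base-point constraints.
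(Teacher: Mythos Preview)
Your strategy rests on a claim that does not hold in the generality you need. You want to collapse pairs such as $f_{25}\circ f_{52}$ (in (\ref{gs:5})) or $f_{13}\circ f_{31}$ (in (\ref{gss:6})) to automorphisms by invoking rigidity of type~II links together with transitivity of the automorphism group on the relevant base points. But rigidity only says that two links of the same numerical type differ by automorphisms of source and target \emph{provided} an automorphism carries one base point to the other, and the transitivity you cite is not available. In (\ref{gss:6}), for instance, the base point of $f_{13}$ on $X_6$ is $p'=f_{31}(p)$ while the base point of $f_{31}^{-1}$ is $t$; these are distinct rational points by construction (otherwise $\phi_3\phi_2$ would be an automorphism, which is excluded), and since $X_6$ here has $k$-Picard rank~$1$ its $k$-automorphism group need not move one rational point to an arbitrary other one. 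The appeal to the open torus orbit fails over $k$ for non-split forms. Similarly, in (\ref{gs:5}) the two degree-$2$ points on $X_5$ blown up by $f_{52}^{-1}$ and $f_{25}$ are unrelated, and $\Aut(X_5)\subset S_5$ is a finite group that certainly does not act transitively on all degree-$2$ closed points. So the middle pairs do not collapse, and the chain does not shorten in the way you outline.

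The paper's argument avoids this obstruction entirely: instead of trying to cancel adjacent links, it builds the common resolution of the whole composite $\nu$ and reads off directly which pencil of conics (or which Galois orbit of $(-1)$-curves) the map transforms. For (\ref{gs:5}) and (\ref{gss:7}, $d=2$) the resolution exhibits $\nu$ as sending one pencil of conics through two degree-$2$ points to another, so $\nu$ lies in family~\ref{generating_set:1}; for (\ref{gss:6}) an enumeration of the sixteen $(-1)$-curves on the resolving $X_4$ shows that the outer degree-$2$ blow-ups are redundant and $\nu=f_{33}$; for (\ref{gss:8}, $d=4$) the resolution places $\nu$ in family~\ref{generating_set:2}. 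Your predicted targets (families (\ref{gs:2}) and (\ref{gs:3})) are not what actually appears. The bookkeeping you defer is not a detail to be filled in later: it \emph{is} the proof, and it proceeds by a different mechanism than the one you sketch.
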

\begin{proof}
{\bf (\ref{gs:5}):} Consider a map $\psi:=f_{12}f_{25}f_{52}f_{21}$ as in (\ref{gs:5}) and denote by $q_5$ (resp. $q_2$) the base-point of $f_{52}$ (resp. $f_{25}$) and $q_2'$ (resp. $q_5'$) the base-point of $f_{52}^{-1}$ (resp. $f_{25}^{-1}$). We complete the blow-up diagram of $\psi$ given in Lemma~\ref{lem:generating_set_BCr}~(\ref{gs:5}) as follows:
\[
\begin{tikzcd}[column sep=0.2cm,row sep=0.2cm]
&&&X_1\ar[dlll,swap,"q_5"]\ar[dl,"q_2'"]\ar[dr,swap,"q_2"]\ar[drrr,"q_5'"]&&&\\
X_6\ar[dr,swap,"q_2'"]&&X_3\ar[dl,swap,"q_5"]\ar[dr,"q_2"]&&X_3'\ar[dl,swap,"q_2'"]\ar[dr,"q_5'"]&&X_6'\ar[dl,"q_2"]\\ 
&Q\ar[rr,dashed,"f_{52}"]&&X_5\ar[rr,"f_{25}",dashed]&&Q'&
\end{tikzcd}
\]
Thus $\psi$ sends the pencil of conics through the base-point of $f_{21}$ and $f_{21}^{-1}(q_2')$ onto the pencil of conics through the base-point of $f_{12}^{-1}$ and $f_{12}(q_2)$, and is hence in the family \ref{generating_set:1}. 

{\bf(\ref{gss:6}):} Consider a map $\psi:=f_{12}f_{13}f_{31}f_{21}$ as in (\ref{gss:6}) and denote by $q_2,q_3,q_3',q_2'$ the base-point of $f_{21},f_{31},f_{13}^{-1},f_{12}^{-1}$ respectively. We complete the blow-up diagram of $\psi$ given in Lemma~\ref{lem:generating_set_BCr}~(\ref{gss:6}) as follows:
\[
\begin{tikzcd}[column sep=0.2cm,row sep=0.2cm]
&&&&X_4\ar[llld,swap,"q_3"]\ar[dl,"p'"]\ar[drr,"t",swap]\ar[rrrrd,"q_3'"]&&&&\\
& X_7\ar[dl,swap,"q_2"]\ar[dr,"p"]&&X_5\ar[dl,"q_3",swap]\ar[dr,"t"]&&&X_5'\ar[lld,"p'",swap]\ar[dr,"q_3'"]&&X_7'\ar[dl,"t'",swap]\ar[dr,"q_2'"] &\\ 
\bP^2\ar[rr,"f_{21}",dashed]&&Q\ar[rr,dashed,"f_{31}"]&&X_6\ar[rrr,"f_{13}",dashed]&&&Q'\ar[rr,"f_{12} ",dashed]&&\bP^2 
\end{tikzcd}
\]
where $p'=f_{31}(p)$ and $t'=f_{13}(t)$. Let $r_1,r_2$ (resp. $s_1,s_2,s_3$) be the geometric components of $q_2$ (resp. $f_{21}^{-1}(q_3)$). On $X_4$, there are exactly sixteen $(-1)$-curves over the algebraic closure $\overline{k}$ of $k$:
\begin{itemize}
    \item The exceptional divisor of $r_1,r_2$; they make up an orbit of length $2$.
    \item The exceptional divisor of $s_1,s_2,s_3$; they make up an orbit of length $3$.
    \item The strict transform of the conic through $r_1,r_2,s_1,s_2,s_3$, which is rational.
    \item The strict transform of the line through $r_1,r_2$, which is rational.
    \item The strict transform of the line through $s_i,s_j$, $i\neq j$; they make up an orbit of length $3$.
    \item The strict transform of the line through $r_i,s_j$; they make up an orbit of length $6$ whose members are not disjoint.
\end{itemize}
It follows that the blow-up of $q_2,q_2'$ is redundant and $\psi=f_{33}$.

{\bf(\ref{gss:7}, $d=2$):} Consider a map $\psi:=f_{12}f_{13}f_{22}f_{31}f_{21}$ as in (\ref{gss:7}) and denote by $q_3,q_2,q_2',q_3'$ the base-points of $f_{31},f_{22},f_{22}^{-1},f_{13}^{-1}$ respectively. We complete the blow-up of $\psi$ given in Lemma~\ref{lem:generating_set_BCr}~(\ref{gss:7}) as follows:
\[
\begin{tikzcd}[column sep=0.2cm,row sep=0.2cm]
&&X_6\ar[dd,"q_2",swap]&&&X_3\ar[lll,"q_3",swap]\ar[lld,"q_2"]\ar[d,"t"]\ar[rrd,"q_2'",swap]\ar[rrr,"q_3'"]&&&X_6'\ar[dd,"q_2'"]&&\\
& X_7\ar[dl]\ar[dr,"p",swap]&&X_5\ar[dl,"q_3"]\ar[dr,"t"]&&X_4\ar[ld,"q_2",swap]\ar[dr,"q_2'"]&&X_5'\ar[dl,"t'",swap]\ar[dr,"q_3'"]&&X_7'\ar[dl,"p' "]\ar[dr] &\\ 
\bP^2\ar[rr,"f_{21}",dashed,swap]&&Q\ar[rr,dashed,"f_{31}",swap]&&X_6\ar[rr,"f_{22}",dashed,swap]&&X_6'\ar[rr,"f_{13}",dashed,swap]&&Q'\ar[rr,"f_{12} ",dashed,swap]&&\bP^2 
\end{tikzcd}
\]
where $p'=(f_{13}f_{22}f_{31})(p)$ and $t'=f_{22}(t)$. Thus $\psi$  belongs to the family \ref{generating_set:1}. 

{\bf(\ref{gss:8}, $d=4$):} Consider a map $\psi:=f_{15}f_{44}f_{51}$ as in (\ref{gss:8}). Let $q_4,q_4',q_5,q_5'$ be the base-point of $f_{44},f_{44}^{-1},f_{51},f_{15}$, respectively. We complete the blow-up of $\psi$ given in Lemma~\ref{lem:generating_set_BCr}~(\ref{gss:8}) as follows, where $Y$ is the blow-up of $X_1$ at the point $p$, and is not a del Pezzo surface:
\[
\begin{tikzcd}[column sep=0.2cm,row sep=0.2cm]
&&&&Y\ar[lllld,"q_5",swap]\ar[lld,"q_4"]\ar[d,"p"]\ar[rrd,"q_4'",swap]\ar[rrrrd,"q_5'"]&&&&\\
X_5'\ar[rd,"q_4",swap]&& X_4\ar[dl,swap,"q_5"]\ar[dr,"p"]&&X_{1}\ar[dl,"q_4",swap]\ar[dr,"q_4'"]&&X_4'\ar[ld,"p'",swap]\ar[dr,"q_5'"]&&X_5''\ar[ld,"q_4'"]\\ 
&\bP^2\ar[rr,"f_{51}",dashed]&&X_5\ar[rr,dashed,"f_{44}"]&&X_5\ar[rr,"f_{15}",dashed]&&\bP^2& 
\end{tikzcd}
\]
where $p'=f_{dd}(p)$. With Lemma~\ref{lem:Schneider}, we obtain that $\psi$ is in the family \ref{generating_set:2}.
\end{proof}

\begin{proof}[Proof of Theorem~\ref{thm:generatorIntro}]
We compare the list of generators in \cite{Isk91} contained in $\BCr_2(k)$ with the list of generators in Lemma~\ref{lem:generating_set_BCr}, and see that the two lists coincide, if we replace ``preserving the pencil of conics through a point of degree $4$ (resp. two points of degree $2$)" by ``sending the pencil of conics trough a point of degree $4$ (resp. two points of degree $2$) onto a pencil of conics of the same kind" in \cite{Isk91}:
\medskip

    \begin{tabular}{c|c|c|c|c|c}
         Lemma~\ref{lem:generating_set_BCr}&\ref{generating_set:1}&\ref{generating_set:2}&(\ref{gs:1})&(\ref{gs:2})&(\ref{gs:3})  \\ \hline
         \cite{Isk91}&A11 &(15),(20) &(7),(8),(19'),(15''') &(10),(11) &(12),(13) 
    \end{tabular}\\ 
    \smallskip

\indent \begin{tabular}{c|c|c|c|c|c}
        Lemma~\ref{lem:generating_set_BCr}&(\ref{gs:4})&(\ref{gs:5})&(\ref{gss:6})&(\ref{gss:7})&(\ref{gss:8})  \\ \hline
         \cite{Isk91}&A17 &(14)&(19') &(16),(17),(11''),(18)&(21),(22)
    \end{tabular}
    
\medskip    
\noindent while type (9), (9'), (11'), (15'), (15''), (19) from \cite{Isk91} are not contained in $\BCr_2(k)$. Note that (\ref{gss:6}) is covered by (19') by Lemma~\ref{lem:redundant_generators}. 
\end{proof}

%----------Revisiting the parity problem
\subsection{Revisiting the parity problem}
\label{subsect:parityProblemRevisit}

Now let us prove that all generators given in Lemma~\ref{lem:generating_set_BCr} induce even permutations when the ground field is $k=\bF_{2^m}$ for $m\geq2$.

%-----Parities of f_{33}, f_{77}, and f_{88} in (5.1)
\subsubsection{Parities of $f_{33}$, $f_{77}$, and $f_{88}$ in (\ref{gs:1})}
\label{subsubsect:parityGenerator}

Up to automorphisms of $\bP^2$, the maps $f_{77}$ and $f_{88}$ are Geiser and Bertini involutions respectively given by equations \eqref{eqn:geiser} and \eqref{eqn:bertini}. By Theorem~\ref{thm:delPezzo}, they induce even permutations on $\bP^2(\bF_q)$ for $q = 2^m\geq 4$. On the other hand, the map $f_{33}$ is a quadratic transformation, that is, a Cremona map defined by the linear system of conics passing through three non-collinear points in $\bP^2$.

\begin{lemma}
\label{lem:quad-invol}
Let $k$ be any field, $f\in\BCr_2(k)$ be a quadratic transformation and $\tau\in\Cr_2(k)$ be the standard quadratic involution $[x:y:z]\mapsto [yz:xz:xy]$.
\begin{enumerate}[label=\textup{(\arabic*)}]
    \item\label{be-invol}
    There exists $g\in\PGL_3(k)$ such that the composition $gf$ is involutive.
    \item\label{be-std}
    If $f$ is involutive, then there exists $h\in\PGL_3(\overline{k})$ such that $\tau = h^{-1}fh$.
\end{enumerate}
\end{lemma}

\begin{proof}
There exists an extension $k'/k$ of degree $3$ and a generator $\sigma\in\Gal(k'/k)\cong\bZ/3\bZ$ such that
$$
    \Bs(f) = \{a,a^\sigma,a^{\sigma^2}\}
    \quad\text{for some}\quad
    a\in\bP^2(k').
$$
Since $f$ is given by blowing up $\{a,a^\sigma,a^{\sigma^2}\}$ and then contracting the three lines passing through these points, the indeterminacy locus of $f^{-1}$ is a Galois orbit for the same extension $k'/k$, namely,
$$
    \Bs(f^{-1}) = \{b,b^\sigma,b^{\sigma^2}\}
    \quad\text{for some}\quad
    b\in\bP^2(k').
$$
For every point $x = [x_0,x_1,x_2]\in\bP^2$, we define
$$
    g_x\colonequals\begin{pmatrix}
        x_0 & x_0^{\sigma} & x_0^{\sigma^2}\\
        x_1 & x_1^{\sigma} & x_1^{\sigma^2}\\
        x_2 & x_2^{\sigma} & x_2^{\sigma^2}
    \end{pmatrix}
$$
to be a linear map that sends the coordinate points $[1:0:0]$, $[0:1:0]$, $[0:0:1]$ to the Galois orbit points $x$, $x^{\sigma}$, $x^{\sigma^2}$, respectively. Note that $g_x$ is invertible when $x$, $x^{\sigma}$, $x^{\sigma^2}$ are not collinear. Let $g\colonequals g_ag_b^{-1}$, which can be easily verified to be defined over $k$. Then $gf$ is involutive as the indeterminacy loci of this map and its inverse both coincide with $\{a,a^\sigma,a^{\sigma^2}\}$. This proves \ref{be-invol}.

Assume that $f$ is involutive, or equivalently, that
$
    \Bs(f) = \{a,a^\sigma,a^{\sigma^2}\} = \Bs(f^{-1}).
$
Let $h = g_a$. Then the indeterminacy loci of $h^{-1}fh$ and its inverse both consist of the three coordinate points. This implies that $\tau = h^{-1}fh$ and thus proves \ref{be-std}.
\end{proof}

Recall that, for every $n\geq 1$, the \emph{standard involution} $\tau\colon\bP^n\dashrightarrow\bP^n$ is defined by
$$
    \tau([x_0:\dots:x_n]) = [\tau_0:\dots:\tau_n]
    \quad\text{where}\quad
    \tau_i = \prod_{j\neq i}x_j.
$$
In terms of the affine coordinates $(\xi_1,\dots,\xi_n)$ where $\xi_i = x_i/x_0$, this map is written as
$$
    \tau(\xi_1,\dots,\xi_n) = (\xi_1^{-1},\dots,\xi_n^{-1}).
$$
From this expression, one can deduce that the fixed locus of $\tau$ consists of points of the form $[\pm 1:\dots:\pm 1]$. Note that these are the same point in characteristic~$2$. In the following, we prove a general fact about bijective Cremona transformations of $\bP^n$ that are conjugate to $\tau$ by automorphisms, then use it to compute the parity induced by $f_{33}$.

\begin{lemma}
\label{lem:std-invol}
Let $n\geq 1$, $k = \bF_{2^m}$, and $f\in\BCr_n(k)$ be an involutive quadratic transformation. If there exists $h\in\PGL_{n+1}(\overline{k})$ such that $h^{-1}fh$ equals the standard involution $\tau$, then the permutation induced by $f$ on $\bP^n(k)$ is odd when $m=1$ and even when $m\geq 2$.
\end{lemma}

\begin{proof}
The relation $\tau = h^{-1}fh$ implies that a point $x\in\bP^n(\overline{k})$ is fixed by $\tau$ if and only if $h(x)$ is fixed by $f$. Because the fixed locus of $\tau$ consists of a single point $[1:\dots:1]$, the fixed locus of $f$ consists of a single point $y\in\bP^n(\overline{k})$ as well. If $y\notin\bP^n(k)$, then $f$ acts on $\bP^n(k)$ as an involution without a fixed point. This implies that the number of rational points
$$
    |\bP^n(k)| = |\bP^n(\bF_{2^m})| = (2^m)^n + \dots + 2^m + 1
$$
is even, contradiction. Hence $y\in\bP^n(k)$, and the action of $f$ on $\bP^n(k)$ is a composition of
$$
    \frac{1}{2}(|\bP^n(k)| - 1) = \frac{1}{2}((2^m)^n + \dots + 2^m)
$$
many transpositions. The last integer is odd if $m=1$ and even if $m\geq 2$, so the result follows.
\end{proof}

\begin{prop}
\label{prop:quadratic}
Let $k=\bF_{2^m}$ with $m\geq 2$. Assume that $f\in\BCr_2(k)$ is of type $f_{33}$. Then $f$ acts on $\bP^2(k)$ as an even permutation.
\end{prop}

\begin{proof}
By Lemma~\ref{lem:quad-invol}, there exists $g\in\PGL_3(k)$ and $h\in\PGL_3(\overline{k})$ such that $h^{-1}gfh$ is the standard quadratic involution. It follows from Lemma~\ref{lem:std-invol} that $gf$ acts on $\bP^2(k)$ as an even permutation. Since $g$ acts on $\bP^2(k)$ evenly by Proposition~\ref{prop:PGLpermute}, the result follows.
\end{proof}

%-----Parities of the generators (4.2) to (4.8)
\subsubsection{Parities of the generators (\ref{gs:2}) to (\ref{gss:8})}
\label{subsubsect:symmetric_generators}

Any birational map $f\in\BCr_2(k)$ which over $\overline{k}$ is a Geiser involution (resp. Bertini involution) up to an element of $\PGL_3({k})$ lifts to an automorphism of a del Pezzo surface of degree $2$ (resp. degree $1$).
In fact, the geometric description of $f$ is analogous to the one of the Geiser involution (resp. Bertini involution) over $\overline{k}$ and to the Geiser involution (resp. Bertini involution) over $k$ with only one base-point. 
It yields directly that $f$ lifts to an automorphism of a del Pezzo surface of degree $2$ (resp. degree $1$). Hence, $f$ induces an even permutation by Theorem~\ref{thm:delPezzo}.

{\it Generator (\ref{gs:2}), (\ref{gs:3}), or (\ref{gss:7}, $d=4,5$):} Let $f$ be the corresponding birational map. Note that we can take $f_{dd}$ in the respective generator to be an involution, so that geometrically $f_{dd}$ is either a Geiser or Bertini involution, which induces an even permutation. Upon applying an automorphism of $\bP^2$ or $Q$, we can assume that $f$ is conjugate to $f_{dd}$. Hence, $f$ also induces an even permutation by Theorem~\ref{thm:parityBirMapIntro}.

{\it Generator (\ref{gs:4}):} 
Let $q_2$ be a point of degree $2$ and $q_5$ a point of degree $5$, both in general positions. Over $\overline{k}$ there are exactly two cubic curves passing through $q_5,q_2$ with a double point at one of the points of $q_2$, and we call $C_2$ its orbit over $k$. Similarly, there are exactly five cubic curves with a double point at one of the points of $q_5$, and we call $C_5$ its orbit over $k$. 
We complete the blow-up diagram of $f=f_{15}f_{52}f_{21}$. By abuse of notation we write $p$ for $f_{21}(L)$, $f_{52}(p)$ and their image in $X_3$.
In $X_3$ there are exactly two curves which over $\overline{k}$ are orbits of disjoint $(-1)$-curves of length $2$ and $5$, namely the strict transforms of $C_2$ and $C_5$, denoted by $\widetilde{C}_2$ and $\widetilde{C}_5$. 
\[
\begin{tikzcd}[column sep=0.2cm,row sep=0.2cm]
& && X_2\ar[dll,swap,"q_5"] \ar[d,"p"] \ar[drr,"\widetilde{C}_2"]&&&\\
& X_7\ar[dl,swap,"q_2"]\ar[dr,"p"]&&X_3\ar[dl, swap,"q_5"]\ar[dr,"\widetilde{C}_2"]&&X_4\ar[ld,"p",swap]\ar[dr,"\widetilde{C}_5"]&\\ 
\bP^2\ar[rr,"f_{21}",dashed]&&Q\ar[rr,dashed,"f_{52}"]&&X_5\ar[rr,"f_{15}",dashed]&&\bP^2 
\end{tikzcd}
\]
The blow-up diagram of $f$ shows that $f$ has the same geometric description as a Geiser involution over $k$ with base-points $q_2$ and $q_5$. Thus, up to composition by an element of $\PGL_3(k)$, $f$ lifts to an automorphism of the del Pezzo surface $X_2$.  
Now Theorem~\ref{thm:delPezzo} and Proposition~\ref{prop:PGLpermute} imply that $f$ induces en even permutation over $k=\bF_q,q=2^m\geq4$.

{\it Generator (\ref{gs:5})} By Lemma~\ref{lem:redundant_generators}, this map is, up to an automorphism of $\bP^2$, a member of the family \ref{generating_set:1} and hence induces an even permutation for $k=\bF_{2^m}$, $m\geq2$ by Corollary~\ref{cor:conicBundle_v2}. 

{\it Generator (\ref{gss:6})} By Lemma~\ref{lem:redundant_generators}, this generator is equal to $f_{33}$, so is treated in Proposition~\ref{prop:quadratic}.

{\it Generator (\ref{gss:7}, $d=3$)} We can complete the blow-up diagram as in Lemma~\ref{lem:redundant_generators} to get
\[
\begin{tikzcd}[column sep=0.2cm,row sep=0.2cm]
&&X_5''\ar[dd,"p_3",swap]&&&X_2\ar[lll,"q_3",swap]\ar[lld,"p_3"]\ar[d,"t"]\ar[rrd,"p_3'",swap]\ar[rrr,"q_3'"]&&&X_5'''\ar[dd,"p_3'"]&&\\
& X_7\ar[dl]\ar[dr,"p",swap]&&X_5\ar[dl,"q_3"]\ar[dr,"t"]&&X_3\ar[ld,"p_3",swap]\ar[dr,"p_3'"]&&X_5'\ar[dl,"t'",swap]\ar[dr,"q_3'"]&&X_7'\ar[dl,"p' "]\ar[dr] &\\ 
\bP^2\ar[rr,"f_{21}",dashed,swap]&&Q\ar[rr,dashed,"f_{31}",swap]&&X_6\ar[rr,"f_{33}",dashed,swap]&&X_6'\ar[rr,"f_{13}",dashed,swap]&&Q'\ar[rr,"f_{12} ",dashed,swap]&&\bP^2 
\end{tikzcd}
\]
where $q_3,p_3,q_3',p_3'$ are the base-points of $f_{31},f_{33},f_{13},f_{33}^{-1}$ respectively. Hence, the composition $f_{13}f_{33}f_{31}$ is geometrically a Geiser involution. Hence the permutation induced on $Q\dashrightarrow Q$ is even. Since $f=f_{12}f_{13}f_{33}f_{31}f_{21}$ is conjugate to $f_{13}f_{33}f_{31}$ (upon applying automorphism of $\bP^2$), $f$ also induces an even permutation by Theorem~\ref{thm:parityBirMapIntro}.

{\it Generator (\ref{gss:8})} The case $d=4$ follows from Lemma~\ref{lem:redundant_generators}. If $d=3$, we have the blow-up diagram,
\[
\begin{tikzcd}[column sep=0.2cm,row sep=0.2cm]
&&&X_1\ar[lld,"q_3"]\ar[d,"p"]\ar[rrd,"q_3'",swap]\\
&X_4\ar[dl,swap,"q_5"]\ar[dr,"p"]&&X_{2}\ar[dl,"q_3",swap]\ar[dr,"q_3'"]&&X_4'\ar[ld,"p'",swap]\ar[dr,"q_5'"]\\ 
\bP^2\ar[rr,"f_{51}",dashed]&&X_5\ar[rr,dashed,"f_{33}"]&&X_5\ar[rr,"f_{15}",dashed]&&\bP^2
\end{tikzcd}
\]
where $q_3,q_3',q_5,q_5'$ are the base-points of $f_{33},f_{33}^{-1},f_{51},f_{15}$ respectively. Hence, $f=f_{15}f_{33}f_{51}$ is a Bertini involution, so $f$ induces an even permutation.

%%%%%%%%%%%%%%
\subsubsection{Parity of the generator $f_{66}$ in (\ref{gs:1})}
\label{subsubsect:quinticTransform}

We finally prove that the remaining generator, namely $f_{66}\colon\bP^2\dashrightarrow\bP^2$, induces a permutation of even parity on $\bP^2(\bF_{2^m})$ for $m\geq2$.

\begin{lemma}[\cite{LS21}*{Lemma~4.20}]
\label{lem:norationalpoint}
Let $p_1,\dots,p_6$ be a point of degree $6$ in $\bP^2$ over $\bF_q$ such that $p_1,\dots,p_6$ are in general position. Then at least $q^2+q$ rational points of $\bP^2$ are in general position with $p_1,\dots,p_6$.
\end{lemma}

\begin{proof}
Let $\sigma$ be the generator of $\mathrm{Gal}(\bF_{q^6}/\bF_q)$ and suppose that $\sigma^i(p_1)=p_i$ for $i=1,\dots,6$.
Let $L_{ij}$ be the line through $p_i,p_j$, and let $r$ be a rational point of $\bP^2$ that is not on the intersection of $L_{14},L_{25},L_{36}$.
The lines through the $p_1,...,p_6$ make up three orbits, namely the orbit of $L_{12}$, $L_{13}$ and $L_{14}$. We check that $r$ is not contained in one of these three lines, from which it follows that $r$ is not on any of the $L_{ij}$. If $r\in L_{12}$, then $L_{23}=\sigma(L_{12})$ contains $r,p_2$, so $L_{23}=L_{12}$, which is impossible. If $r\in L_{13}$, then $r,p_3$ are both contained in $\sigma^2(L_{13})=L_{35}$, which is again impossible. If $r\in L_{14}$, then $r$ is also contained in $\sigma(L_{14})=L_{25}$ and $\sigma^2(L_{14})=L_{36}$, which contradicts our choice of $r$. 
Finally, if $p_{1},..,p_{5},r$ lie on a conic $C$, then $\sigma(C)$ and $C$ contain 5 common points and hence are equal, which is impossible.
\end{proof}

\begin{lemma}
\label{lem:allbutonedegree2point}
Suppose $p_1,\dots,p_6$ make up a point of degree $6$ in $\bP^2$ over $\bF_{q}$ such that no three are collinear and let $L$ be the line through $p_1$ and $p_2$. Under the action of $\mathrm{Gal}(\bF_{q^6}/\bF_q)$, there is at most one point $r\in L$ whose orbit in $\bP^2$ is of length $2$. In this case, $r$ and its Galois conjugate form the only point of degree $2$ contained in the orbit of $L$.
\end{lemma}

\begin{proof}
Consider $i$ as an integer modulo $6$ and let
\begin{itemize}
    \item $\sigma$ to be the generator of $\mathrm{Gal}(\bF_{q^6}/\bF_q)$ such that $\sigma^i(p_1)=p_{i+1}$, and
    \item $L_{p_ip_{i+1}}$ to be the line through $p_i$ and $p_{i+1}$ so that $L=L_{p_1p_2}$.
\end{itemize}
Suppose that there exists $r\in L_{p_1p_2}$ such that $\{r,\sigma(r)\}$ form a point of degree~$2$ in $\bP^2$. Then
$$
    r\in L_{p_1p_2}\cap L_{p_3p_4}\cap L_{p_5p_6}
    \quad\text{and}\quad
    \sigma(r)\in L_{p_2p_3}\cap L_{p_4p_5}\cap L_{p_6,p_1}.
$$
In particular, $\{r,\sigma(r)\}$ is contained in the orbit of $L_{p_1p_2}$. If $L_{p_1p_2}$ contains another point $s$ whose orbit is of length~$2$. Then $L_{p_1p_2}\cap L_{p_3p_4}$ contains both $r$ and $s$, thus $L_{p_1p_2} = L_{p_3p_4}$, which contradicts the hypothesis that no three of the $p_i$'s are collinear.
\end{proof}

\begin{lemma}
\label{lem:nodalcubicrational}
Let $C\subset\bP^2$ be a singular cubic over an arbitrary field $k$. Then $C$ is rational, that is, its normalization $\widetilde{C}$ is isomorphic to $\bP^1$ over $k$.
\end{lemma}

\begin{proof}
By Ch\^{a}telet's theorem, $\widetilde{C}\cong\bP^1$ over $k$ if and only if $\widetilde{C}$ contains a $k$-point. This is always the case when $C$ is a cuspidal cubic. Suppose that $C$ is a nodal cubic and let $p\in C$ be the node. The linear system of lines passing through $p$ is isomorphic to $\bP^1$ over $k$. Note that $|\bP^1(k)|\geq 3$ for any field $k$. Since the tangent cone at $p$ contributes at most two elements to $\bP^1(k)$, there exists a line $\ell\in\bP^1(k)$ such that $\ell\cap C = \{p,p'\}$ for some $k$-point $p'\neq p$. The point $p'$ induces a $k$-point on the normalization $\widetilde{C}$, so the proof is done.
\end{proof}

\begin{lemma}
\label{lem:notonaconic}
Let $q=2^m\geq2$ and suppose $p_1,\dots,p_6$ make up a point of degree $6$ in $\bP^2$ over $\bF_{q}$ contained in a singular cubic $C$. Then there are at least $\frac{1}{2}(q^2-2q-2)$ points of degree $2$ on $C$ that are not on a conic with $p_1,p_2,p_4,p_5$.
\end{lemma}

\begin{proof}
There is an involution $\sigma$ on $C$ which maps a general $x\in C(\overline{\bF_q})$ to the residual intersection of the conic passing through $p_1,p_2,p_4,p_5,x$ with $C$. Let $z\in C$ be the singular point and $\tau\colon\widetilde{C}\to C$ be the normalization. Then $\sigma$ lifts to an involution $\widetilde{\sigma}$ on $\widetilde{C}$ which preserves the set $\tau^{-1}(z)\subset\widetilde{C}$. Notice that $\widetilde{C}\cong\bP^1$ by Lemma~\ref{lem:nodalcubicrational}. Then an elementary computation shows that $\widetilde{\sigma}$, up to conjugation over $\bF_{q^6}$, acts on $\widetilde{C}$ as $x\mapsto x+a$ for some $a\in\bF_{q^6}$.

If $C$ is a nodal cubic, the total number of points of degree~$2$ on $C$ is given by
$$
    \frac{1}{2}|C(\bF_{q^2})\setminus C(\bF_{q})|
    = \begin{cases}
    \frac{1}{2}(q^2-q) & \text{if }\tau^{-1}(z)\text{ consists of two } \bF_q\text{-points},\\
    \frac{1}{2}(q^2-q-2) & \text{if }\tau^{-1}(z)\text{ is a point of degree }2\text{ over }\bF_q.
    \end{cases}
$$
If $C$ is a cuspidal cubic, the total number of points of degree~$2$ on $C$ is given by
$$
    \frac{1}{2}|C(\bF_{q^2})\setminus C(\bF_{q})|=\frac{1}{2}(q^2-q).
$$
Pick any $x\in C(\bF_{q^2})\setminus C(\bF_q)$. Then $x$ and its conjugate $x^q$ lie on a conic with $p_1,p_2,p_4,p_5$ if and only if $x^q = x+a$. The last equation has at most $q$ distinct solutions in $x$, so the number of degree-$2$ points on $C$ lying on a conic with $p_1,p_2,p_4,p_5$ is at most $\frac{1}{2}q$. As a consequence, at least
$$
    \frac{1}{2}(q^2-q-2) - \frac{1}{2}q
    = \frac{1}{2}(q^2-2q-2)
$$
many points of degree~$2$ on $C$ do not lie on a conic with $p_1,p_2,p_4,p_5$.
\end{proof}

\begin{lemma}
\label{lem:cubicsurface66}
Let $q=2^m\geq4$. Let $p$ be a point of degree $6$ in $\bP^2$ over $\bF_q$ such that its blow-up is a del Pezzo surface. Then there exists at least one point $r$ of degree $2$ in $\bP^2$ such that the blow-up at $p,r$ is still a del Pezzo surface (i.e. $p,r$ are in general position). 
\end{lemma}

\begin{proof}
Choose a generator $\sigma$ for $\mathrm{Gal}(\bF_{q^6}/\bF_q)$ and let $p_1,\dots,p_6$ be the orbit making up $p$ such that $\sigma(p_i) = p_{i+1}$ for each $i$ modulo $6$. In the following, we prove that there exists a point $r = \{r_1,r_2\}$ of degree~$2$ in $\bP^2$ such that
\begin{itemize}
    \item no three of the eight points $p_1,\dots,p_6,r_1,r_2$ are on a line,
    \item no six of them are on a conic, and
    \item no eight of them are on a nodal cubic with one being the double point.
\end{itemize}

Let $r = \{r_1,r_2\}$ be a point of degree~$2$ in $\bP^2$ such that $r_1$ (resp. $r_2$) is not collinear with any two consecutive $p_i$'s. Let $L_{ij}$ be the line through $p_i,p_j$. The lines through the $p_1,...,p_6$ make up three orbits, namely the orbit of $L_{12}$, $L_{13}$ and $L_{14}$. By Lemma~\ref{lem:allbutonedegree2point} there is at most one point of degree~$2$ in the orbit of $L_{12}$, and we choose $r$ to be outside of the orbit of $L_{12}$. Note that the line through $r$ is rational, so it cannot contain any $p_i$. 
Suppose that $r_1\in L_{13}$. Then $r_1\in \sigma^2(L_{13})=L_{35}$ and thus $\sigma^2(L_{13})\cap L_{13}$ contains $p_3,r_1$. This implies $\sigma(L_{13})=L_{13}$, which is against our hypothesis. Suppose that $r_1\in L_{14}$. Then $r_2\in\sigma^3(L_{14})=L_{14}$ and hence $L_{14}=\sigma^3(L_{14})$ is the line through $r$, which is impossible as we have already explained.

Suppose that $p_1,\dots,p_4,r_1,r_2$ are on a conic $C$. Then $\sigma(C)\cap C$ contains $p_2,p_3,p_4,r_1,r_2$, hence $C=\sigma(C)$, that is, $C$ is invariant under $\mathrm{Gal}(\bF_{q^6}/\bF_q)$. This implies that $C$ contains $p$, which is against our hypothesis. Suppose that $p_1,\dots,p_5,r_1$ are on a conic $C$. Then $\sigma^2(C)$ passes through $p_3,p_4,p_5,p_6,p_1,r_1$. We have $C\cap\sigma^2(C)$ contains $p_1,p_3,p_4,p_5,r_1$ and hence $\sigma^2(C)=C$, which is impossible as $C$ does not contain $p_6$.
To finish the conic case, recall from Lemma~\ref{lem:norationalpoint} that there is a rational point $s$ in $\bP^2$ such that $s,p_1,\dots,p_6$ are in general position. There exists a singular cubic containing $p_1,\dots,p_6$ with $s$ its singular point. By Lemma~\ref{lem:notonaconic}, there are at least $\frac{1}{2}(q^2-2q-2)\geq3$ points of degree $2$ not on a conic with $p_1,p_2,p_4,p_5$. We can choose $r_1,r_2$ to be one of them.

Finally, if there is a nodal cubic $C$ through the eight points $p_1,\dots,p_6,r_1,r_2$ with one of them its double point, then $\sigma(C)\neq C$ and $C\cdot\sigma(C)\geq10$, which is impossible.
\end{proof}

\begin{rmk}
Let $p,r$ be points in $\bP^2$ of degree $6$ and $2$ such that their blow-up is a del Pezzo surface. 
On can describe the Bertini involution on this surface in a very nice way: 
Let $S$ be the blow-up of $p$ and view it as cubic surface in $\bP^3$. We now can view $r$ as a point of $\bP^3$, and denote by $L\subset\bP^3$ the line passing through $r$. 
We claim that $L$ is not a $(-1)$-curve on $S$. Indeed, the $27$ lines on $S$ are the six exceptional divisors of the components $p_1,\dots,p_6$ of $p$, the $15$ strict transforms of the lines through two of the $p_i$, and the $6$ strict transforms of the conics passing through five of the $p_i$. None of these curves is defined over $\bF_q$, while $L$ is defined over $\bF_q$. 
So, the line $L$ intersects $S$ transversely in $r$ and a rational point $s$. 
The planes in $\bP^3$ containing $L$ induces an elliptic fibration on $S$, or more precisely, on the blow-up of $S$ at $r,s$, where the exceptional curve of $r$ defines a zero section. In particular, the Bertini involution can be defined as it is the multiplication by $-1$ using the group law on the generic fiber.
\end{rmk}

\begin{prop}[\cite{LS21}*{Lemma~4.12~(2)}]
\label{prop:standardquinticseven}
Assume that $m\geq2$ and $q=2^m\geq4$. 
Then any link $f_{66}\colon\bP^2\dashrightarrow\bP^2$ induces an even permutation on $\bP^2(\bF_q)$.
\end{prop}
\begin{proof}
Let $p$ be the base-point of degree $6$ of $f_{66}$. 
By Lemma~\ref{lem:cubicsurface66}, there exists a point $r$ of degree $2$ such that the blow-up at $r,p$ is a del Pezzo surface $T$. Denote respectively by $E_1,E_2$ and $E_1',\dots,E_6'$ the geometric components of their exceptional divisors. Let $L$ be the pullback of the class of a line in $\bP^2$. Then the only orbits of $(-1)$-curves in $T$ of length at most $8$ with pairwise disjoint members are as follows:
\begin{alignat*}{3}
&\red{E}&&\colonequals\left\{E_1,E_2\right\},\\
&\blue{E'}&&\colonequals\left\{E_1',\dots,E_6'\right\},\\
&\ell&&\colonequals\left\{L-E_1-E_2\right\},\\
&\blue{C}&&\colonequals\left\{
    2L-\sum\nolimits_{j\in\{1,\dots,6\}\setminus\{i\}}E_{j}' \;\middle|\; i=1,\dots,6
\right\},\\
&\blue{F}&&\colonequals\left\{ 
    4L-2E_1-2E_2-2E_{i}'-\sum\nolimits_{j\in\{1,\dots,6\}\setminus\{i\}}E_{j}' \;\middle|\; i=1,\dots,6
\right\},\\
&D&&\colonequals\left\{
    5L-E_1-E_2-2\left(
        \sum\nolimits_{j=1}^6E_j'
    \right)
\right\},\\
&\red{S}&&\colonequals\left\{
    6L-3E_i-2E_{3-i}-2\left(
        \sum\nolimits_{j=1}^6E_j'
    \right)\;\middle|\; i=1,2
\right\},\\
&\blue{S'}&&\colonequals\left\{
    6L-2E_1-2E_2-3E_{i}'-2\left(
        \sum\nolimits_{j\in\{1,\dots,6\}\setminus\{i\}}E_{j}'
    \right)\;\middle|\; i=1,\dots,6
\right\}.
\end{alignat*}
Drawing all possible blow-downs from $T$ over $\bF_{q}$, we obtain the following commutative diagram, where the arrows are denoted by the set of $(-1)$-curves they contract.

\begin{center}
\begin{tikzpicture}[baseline=(a).base]
\node[scale=1](a) at (0,0){
\begin{tikzcd}[link3]
&\bP^2\ar[rr,dashed,-]\ar[ldd,dashed,-]&&\bP^2\ar[ddr,dashed,"\tau_2"]&\\ 
&&&&\\
Q&Y_3\ar[l,"D"]\ar[uu,"\red{S}"]&Y_3'\ar[uul,"\blue{S'} ",swap]\ar[uur,"\blue{F}"]&Y_3''\ar[uu,"\red{S}"]\ar[r,"\ell"]&Q\\
&&&&\\
&Y_2\ar[ddl,"\blue{C}",swap]\ar[uul,"\blue{S'} "]&T\ar[l,"D",swap]\ar[uul,"\blue{S'}",swap]\ar[uu,"\red{S}"]\ar[uur,"\blue{F}"]\ar[r,"\ell"]\ar[ddr,"\blue{E'}",swap]\ar[dd,"\red{E}"]\ar[ddl,"\blue{C}"]&Y_2'\ar[uur,"\blue{F}",swap]\ar[ddr,"\blue{E'} "]&\\
&&&&\\
Q\ar[uuuu,dashed,-]&Y_1\ar[l,"D"]\ar[dd,"\red{E}"]&Y_1'\ar[ddl,"\blue{C}"]\ar[ddr,"\blue{E'}",swap]&Y_1''\ar[dd,"\red{E} "]\ar[r,"\ell"]&Q\ar[uuuu,dashed,"g",swap]\\
&&&&\\
&\bP^2\ar[rr,dashed,"f_{66}",swap]\ar[uul,dashed,-]&&\bP^2\ar[ruu,dashed,"\tau_1",swap]&
\end{tikzcd}};
\end{tikzpicture}
\end{center}

The Bertini involution $\beta\in\mathrm{Aut}(T)$ acts on the set $\{E,E',\ell,C,F,D,S,S'\}$, and it does not preserve any of them. It is thus a rotation of order $2$, and it exchanges the rational curves $\ell,D$.
So, $\beta$ is the birational map corresponding to the path of arrows from the lower left $\bP^2$ to the upper right $\bP^2$, that is,
$$
    \beta
    = \varepsilon
    \circ\tau_2^{-1}
    \circ g
    \circ \tau_1
    \circ f_{66}
    \quad\text{for some}\quad
    \varepsilon\in\mathrm{PGL}_3(\bF_q).
$$
By Proposition~\ref{prop:PGLpermute}, $\varepsilon$ induces even parity on $\bP^2(\bF_q)$. By Theorem~\ref{thm:delPezzo}, the automorphism $\beta$ induces an even permutation on $T(\bF_q)$, and by Theorem~\ref{thm:parityBirMapIntro}, it induces an even permutation on $\bP^2(\bF_q)$. 
The map $\tau_2^{-1}\circ g\circ\tau_1$ is a generator of $\BCr_2(\bF_q)$ of the form (\ref{gs:2}), and we showed in Section~\ref{subsubsect:symmetric_generators} that it induces an even permutation on $\bP^2(\bF_q)$.
As a consequence, $f_{66}$ induces an even permutation on $\bP^2(\bF_q)$.
\end{proof}

The Bertini involution acting on the commutative diagram in the above proof is a tool used in \cite{LS21} to show that the Cremona group of rank $2$ over an arbitrary perfect field is generated by involutions, where it is called \emph{central symmetry} \cite{LS21}*{Corollary~4.4}.

\begin{proof}[Proof of Theorem~\ref{mainthm:intro}]
By Corollary~\ref{cor:conicBundle_v2}, the results proved in \S\ref{subsubsect:parityGenerator} and \S\ref{subsubsect:symmetric_generators}, and Proposition~\ref{prop:standardquinticseven}, it follows that all generators of $\BCr(\bF_q)$ induce even permutations on $\bP^2(\bF_q)$.
\end{proof}

\section{Basic properties on the bijective Cremona group}
\label{sect:basicProperties}

In this section, we prove that the group $\BCr_2(k)$ is not finitely generated in most situations and is of infinite index as a subgroup of $\Cr_2(k)$. We also show that $\BCr_2(k)$ is not a normal subgroup of $\Cr_2(k)$, and discuss whether the kernel of the homomorphism $\BCr_n(k)\to\Sym(\bP^n(k))$ is a normal subgroup of $\Cr_n(k)$ or not.

%----------Non-finite generation
\subsection{Non-finite generation}
\label{subsect:non-fg}

The Cremona group $\Cr_2(k)$ itself is not finitely-generated over any field $k$. (See \cite{Can12}*{Proposition~3.3} and \cite{Can18}*{Proposition~3.6}.) Here we prove that the same property holds for $\BCr_2(k)$ under the situations described below.

\begin{prop}
\label{prop:basicprop_fg}
Let $k$ be a field and let $k^s$ be a separable closure. The group $\BCr_2(k)$ is not finitely generated provided that
\begin{enumerate}[label=\textup{(\arabic*)}]
    \item\label{fg_uncountable}
    the field $k$ is uncountable,
    \item\label{fg_finite-to-closure}
    the degree $[k^s:k]$ is finite, or
    \item\label{fg_infinite-to-closure}
    the degree $[k^s:k]$ is infinite and $k$ admits a separable quadratic extension $T/k$.
\end{enumerate}
\end{prop}

We will prove the three statements in Proposition~\ref{prop:basicprop_fg} separately. The proofs for \ref{fg_uncountable} and \ref{fg_finite-to-closure} will come first as they are relatively shorter comparing to \ref{fg_infinite-to-closure}.

\begin{proof}[Proof of Proposition~\ref{prop:basicprop_fg}~\ref{fg_uncountable}]
If $k$ is uncountable, then $\operatorname{PGL}_{3}(k)\subset \operatorname{BCr}_2(k)$ is uncountable, thus $\operatorname{BCr}_2(k)$ cannot be a finitely generated group. (This proof was pointed out to us by Zinovy Reichstein.)
\end{proof}

\begin{proof}[Proof of Proposition~\ref{prop:basicprop_fg}~\ref{fg_finite-to-closure}]
Let $k_0$ be the prime field of $k$, which is either $\bQ$ or $\bF_{p}$ depending on the characteristic. For each $f\in\BCr_2(k)$, let $\Bs(f)\subset \bP^2$ denote the base scheme of $f$, and let $k_f/k_0$ be the minimal field extension over which every geometric point of $\Bs(f)$ and $\Bs(f^{-1})$ (including the infinitely near ones) is defined. Note that $f$ is defined over $k_f$ by definition, and $k_f$ may not contain $k$ in general.

Assume that $\BCr_2(k)$ is generated by a finite subset $\Gamma$ and let $k_{\Gamma}$ be the composite of $k_{f}$ for all $f\in\Gamma$. Since every $g\in\BCr_2(k)$ is a composition of elements of $\Gamma$, we have $k_g\subset k_\Gamma$. For every $a\in k$, the map
$
    g\colon [x:y:z]\mapsto[x+ay:y:z]
$
belongs to $\PGL_3(k)$, and thus $\BCr_2(k)$. Hence $k_g=k_0(a) \subset k_\Gamma$. This implies $k\subset k_\Gamma$ as $a\in k$ is arbitrary. Now we obtain a tower of field extensions
$$
    k_0\subset k\subset k_\Gamma
$$
where $k_\Gamma$ is finitely-generated over $k_0$ and $[k_\Gamma:k]$ is finite. By the Artin--Tate lemma \cite{AT51}*{Theorem~1}, $k$ is finitely-generated over $k_0$. Hence $[k:k_0]$ is finite. (See, e.g., \cite{AM69}*{Proposition~7.9}.) As $[k^s:k]$ is finite by hypothesis, we conclude that $[k^s:k_0]$ is finite, contradiction.
\end{proof}

When $k^s/k$ is an infinite extension, our strategy is to construct a sequence of elements in $\BCr_2(k)$ whose indeterminacy loci contain points of arbitrarily large degrees. The construction requires careful selections of the candidates for the indeterminacy points in $\bP^2$. Let us start with a few lemmas that help us deal with the positioning problem.

\begin{lemma}
\label{lem:4-points}
Suppose that $k$ is a field with $[k^s:k]=\infty$ and let $T/k$ be a separable quadratic extension. Then there exists four points $\{a_1, a_2, b_1, b_2\}$ in $\mathbb{P}^2(T)$ such that $\{a_1, a_2\}$ and $\{b_1, b_2\}$ form $\Gal(T/k)$-orbits, and no three of them are collinear.
\end{lemma}

\begin{proof}
Since $T/k$ is separable, there exists a point in $\bP^2$ of degree $2$ that is reduced: we may take $a_1 = [a:1:0]$ and $a_2 = [a':1:0]$, where $a,a'\in T\setminus k$ are the distinct roots of an irreducible quadratic polynomial over $k$. Take $\beta\subset\bP^2$ to be any $k$-line not spanned by $a_1$ and $a_2$. Since $\beta\cong\bP^1$ over $k$, we can find a pair of Galois-conjugate points $\{b_1,b_2\}$ on $\beta$ in a similar way as before. Then $\{a_1,a_2\}$ and $\{b_1,b_2\}$ satisfy the requirements.
\end{proof}

As a consequence of Lemma~\ref{lem:4-points}, there exists a unique conic $C_x$ through $\{a_1,a_2,b_1,b_2,x\}$ for every $x\in\bP^2\setminus\{a_1, a_2, b_1, b_2\}$,
which degenerates if and only if $x$ lies on the line spanned by any two of the four points \cite{BKT08}*{Theorem~1}. All but three of these conics are smooth,
and the three degenerate ones are
\begin{equation}
\label{eqn:degconicfibers}
\begin{aligned}
	C_0 = \mathrm{span}(a_1,a_2)\cup\mathrm{span}(b_1,b_2),\\
    C_1 = \mathrm{span}(a_1,b_1)\cup\mathrm{span}(a_2,b_2),\\
    C_2 = \mathrm{span}(a_1,b_2)\cup\mathrm{span}(a_2,b_1).
\end{aligned}
\end{equation}
Note that these curves are defined over $k$.

\begin{lemma}
\label{lem:evenOrbit}
Retain the notation from Lemma~\ref{lem:4-points}. Let $\ell_1\subset\bP^2$ be a line over $T$ passing through $a_1$, but not $a_2,b_1,b_2$, and let $\ell_2$ be its $\Gal(T/k)$-conjugate. Let $K_0/k$ be a non-trivial Galois extension different from $T$ and let $K=K_0T$ be the composite field. Then there exists a closed point $x\in\ell_1$ defined over $K/k$ but not over any proper subfield, such that
\begin{enumerate}[label=\textup{(\arabic*)}]
\item\label{evenOrb:conj}
Let $r = [K:T]$. Then $r$ of the $\Gal(K/k)$-conjugates of $x$ lie on $\ell_1$ (resp. $\ell_2$).

\item\label{evenOrb:smConic}
Let $x = x_1,...,x_{2r}$ be the $\Gal(K/k)$-conjugates of $x$.
For each $1\leq i\leq 2r$, the unique conic passing through $\{a_1, a_2, b_1, b_2, x_i\}$ is smooth.

\item\label{evenOrb:distinct}
If $x_i, x_j$ are any two distinct conjugates of $x$,
the six points $a_1$, $a_2$, $b_1$, $b_2$, $x_i$, $x_j$ do not lie on a conic.
\end{enumerate}
\end{lemma}

\begin{proof}
Consider the $\bP^1$ that parametrizes the conics passing through $a_1,a_2, b_1, b_2$. By the primitive element theorem, $K=k(z)$ for some $z\in K$, which can be seen as a $K$-point $z\in\bP^1(K) = K\cup\{\rm pt\}$. Let $\{z=z_1, ..., z_{2r}\}$ be the Galois orbit of $z$ in the base $\mathbb{P}^1$, and let $F_1, ..., F_{2r}$ be the conics in $\bP^2$ corresponding to these orbit points. Here we index the points in a way that the action of $\Gal(K/T)$ preserves the parities of the indices. In particular, the conic $F_i$ with odd $i$ (resp. even $i$) intersects $\ell_1$ (resp. $\ell_2$) at $a_1$ (resp. at $a_2$), and it cannot be tangent to $\ell_1$ (resp. to $\ell_2$) since otherwise it would be defined over $T$.

Let $x_i$ be the residual intersection of $F_i$ with $\ell_1$ (resp. with $\ell_2$) for odd $i$ (resp. for even $i$) and let $x=x_1$. By construction, these points are all distinct, form an orbit under the action of $\Gal(K/k)$, and equally distribute on $l_1$ and $l_2$, which proves \ref{evenOrb:conj}. Property~\ref{evenOrb:smConic} holds since each $F_i$ is defined over $K$ but not over any proper subfield, while the three degenerate conics $C_0$, $C_1$, $C_2$ are defined over $k$. Finally, if the set $\{a_1, a_2, b_1, b_2, x_i, x_j\}$ where $i\neq j$ lies on a conic $C$, then $C=F_i=F_j$, which contradicts the construction. This proves \ref{evenOrb:distinct}.
\end{proof}

\begin{lemma}
\label{lem:largebase}
Retain the notation from Lemma~\ref{lem:evenOrbit}. Then there exists $f\in\BCr_2(k)$ whose indeterminacy locus contains a point of degree $[K:k]$ over $k$.
\end{lemma}

\begin{proof}
The construction is accomplished via the following steps:
\begin{enumerate}[label=(\arabic*)]
\item Pick four points $a_1,a_2,b_1,b_2\in\bP^2$ as in Lemma~\ref{lem:4-points}.
blow-up $\bP^2$ along $\{a_1,a_2,b_1,b_2\}$ to obtain a conic bundle $\mathcal{C}\rightarrow\bP^1$ fibered in the conics passing through $\{a_1,a_2,b_1, b_2\}$.
Recall that only three of the fibers are degenerate, namely, $C_0$, $C_1$, $C_2$ defined in \eqref{eqn:degconicfibers}.
The exceptional divisors $A_1,A_2,B_1,B_2$ over $a_1,a_2,b_1,b_2$, respectively, form four sections of the bundle. Moreover, the $\Gal(K/k)$-action exchanges the irreducible components of the two singular fibres $C_1$ and $C_2$.

\item Let $x$ be the point obtained in Lemma~\ref{lem:evenOrbit} and consider it as a point on $\mathcal{C}$. 
blow-up $\mathcal{C}$ along the $\Gal(K/k)$-orbit of $x$ to obtain a map $X\to\mathcal{C}$. The strict transform of the fibers of $\mathcal{C}\to\bP^1$ containing $x$ is a $\Gal(K/k)$-orbit of $(-1)$-curves $F_1,\dots,F_{2r}$ by Lemma~\ref{lem:evenOrbit}\ref{evenOrb:smConic}. Using Castelnuovo's contractibility criterion in positive characteristics \cite{Bad01}*{Theorem~3.30}, blow down $F_1, ..., F_{2r}$ to get $X\to\mathcal{C}'$, and $\mathcal{C}'$ is a conic fibration over $\bP^1$.
The induced birational map $\phi\colon \mathcal{C}\dashrightarrow\mathcal{C}'$ preserves the conic fibrations.

\item The birational map $\phi$ is regular around the singular fibers of $\mathcal{C}\to\bP^1$, so $\phi(C_0),\phi(C_1),\phi(C_2)$ are the singular fibres of $\mathcal{C}'$ and the $\Gal(K/k)$-action exchanges the irreducible components of $\phi(C_1)$ and $\phi(C_2)$. Hence $K_{\mathcal{C}'}^2=5$. Sine $\mathcal{C}'$ has a $k$-point, it follows from \cite{Schneider20}*{Lemma 6.5} that there is a birational morphism $\mathcal{C}'\to\bP^2$ contracting a $\Gal(K/k)$-orbit $O$ of four points. Since $\mathcal{C}'\to\bP^1$ has three singular fibres, one of which has $\Gal(K/k)$-invariant components, it follows that $O$ is the union of two $\Gal(K/k)$-orbits $\{a_1',a_2'\}$ and $\{b_1',b_2'\}$.
\end{enumerate}

The desired Cremona map $f$ is then obtained from the composition
\begin{equation}
\label{eqn:PCXCP}
\vcenter{\vbox{
\xymatrix@C=12pt@R=12pt{
	&& X\ar[dl]_{\text{contracting }E_i\text{'s}}\ar[dr]^{\text{contracting }F_i\text{'s}} &&\\
	& \mathcal{C}\ar[dl]\ar@{-->}[rr]^-\phi && \mathcal{C}'\ar[dr] &\\
    \bP^2\ar@{-->}[rrrr]^{f} &&&& \bP^2.
}}}
\end{equation}
The map $f$ belongs to $\Cr_2(k)$ since it is composed from maps defined over $k$.
As for the indeterminacy loci, we have
$$
	\Bs(f) = \{a_1, a_2, b_1, b_2, x_1, \dots, x_{2r}\}
	,\qquad
	\Bs(f^{-1}) = \{a_1', a_2', b_1', b_2', y_1, \dots, y_{2r}\}
$$
where $y_1, \dots, y_{2r}$ are the images of $F_1, ..., F_{2r}$ in the final $\bP^2$. This shows that $f\in\BCr_2(k)$, and $\Bs(f)$ contains the $\Gal(K/k)$-orbit $\{x_1, ..., x_{2r}\}$ of size $2r = [K:k]$.
\end{proof}

\begin{proof}[Proof of Proposition~\ref{prop:basicprop_fg}~\ref{fg_infinite-to-closure}]
Let $k'_f/k$ be the be the minimal field extension over which every geometric point of $\Bs(f)$ and $\Bs(f^{-1})$ (including the infinitely near ones) is defined. If $\BCr_2(k)$ is finitely-generated by $f_1, f_2, ..., f_r$, then for each $f\in\BCr_2(k)$, $k'_f$ would be contained in the composite of $k'_{f_1}, ..., k'_{f_r}$, and so
$$
	[k'_f:k] \leq \prod_{i=1}^{r} [k'_{f_i}:k],
$$
which implies that the set of integers $\{[k'_f:k] \ {:} \ f\in\BCr_2(k)\}$ is bounded. The assumption $[k^s:k]=\infty$ guarantees that $k$ admits a Galois extension $K_0/k$ such that $K=K_0T$ has arbitrarily large degree $d$ over $k$. By Lemma~\ref{lem:largebase}, there exists $h\in\BCr_2(k)$ whose indeterminacy locus contains a point of degree $d$ over $k$ and hence $d\leq [k_h':k]$, contradiction. 
\end{proof}

%----------The infinite index
\subsection{The infinite index}
\label{subsect:infty-index}

The construction of the Cremona maps in Lemma~\ref{lem:largebase} can be used to show that $\BCr_2(k)$ is of infinite index as a subgroup of $\Cr_2(k)$. Before proving this statement, let us remark that the transformation between conic bundles $\mathcal{C}\dashrightarrow\mathcal{C}'$ in the proof of Lemma~\ref{lem:largebase} is a Sarkisov link of type~II. The discovery of the induced Cremona maps can date back to 1877 by Ruffini, whose homaloidal type, as computed in the following lemma, is documented in \cite{Hud24}*{page~234}.

\begin{lemma}
\label{lem:Ruffini}
Consider the Cremona map (\ref{eqn:PCXCP}). Let $M\in\Pic(X)$ be the pullback of a line class from the right $\bP^2$. Then
$$
	M = (2n+1)L - 2\sum_{i=1}^nE_i - n(A_1 + A_2 + B_1 + B_2)
$$
where $n = 2r$ is the cardinality of the large Galois orbit.
\end{lemma}

\begin{proof}
The fiber class $F$ corresponds to a conic in the right $\bP^2$ passing through $a_1', a_2', b_1', b_2'$,
so the class in $\Pic(X)$ corresponding to a conic from the right $\bP^2$ equals
\begin{gather*}
	2M = F + A_1' + A_2' + B_1' + B_2'
    = F + A_1 + A_2 + B_1 + B_2 + 2nF - 4\sum_{i=1}^{n}E_i\\
    = (2n+1)(2L - A_1 - A_2 - B_1 - B_2) - 4\sum_{i=1}^{n}E_i\\
    = (4n+2)L - 2n(A_1 + A_2 + B_1 + B_2) - 4\sum_{i=1}^{n}E_i.
\end{gather*}
Divide both sides by 2 to get the result.
\end{proof}

\begin{prop}
\label{prop:baseprop_ii}
Let $k$ be any field. Then $\BCr_2(k)\subset\Cr_2(k)$ is a subgroup of infinite index.
\end{prop}

\begin{proof}
First assume that $k$ is infinite. Let us construct inductively an infinite sequence of maps $f_1, f_2, f_3, ... $ in $\Cr_2(k)$ as follows: Let $f_1$ be the identity map. Suppose that $f_i$ is constructed and let $U\subset\bP^2$ be the open subset such that $f_i|_U$ is an isomorphism. As $k$ is infinite, we can take three non-collinear points
$
	\{a,b,c\}\subset U(k).
$
Define $f_{i+1}\colonequals\tau\circ f_i$ where $\tau$ is the quadratic transformation with $\Bs(\tau)=\{f_i(a),f_i(b),f_i(c)\}$. Then we have
$$
	|\Bs(f_{i+1})(k)|\geq |\Bs(f_{i})|+3.
$$
Note that the left cosets $f_1 \BCr_2(k)$, $f_{2} \BCr_2(k), \dots$ are all pairwise disjoint because the elements in $\BCr_2(k)$ cannot increase the indeterminacy points of $f_{i}$ in $\mathbb{P}^2(k)$.

Now assume that $k=\bF_{q}$ is a finite field. The same idea as in the proof of Lemma~\ref{lem:4-points} produces four points $a_1,a_2,b_1,b_2\in\bP^2(\bF_q)$ such that no three are collinear. The main construction of the Cremona map carried out in Lemma~\ref{lem:largebase} still works, and for each even integer $n=2r$, we get a map $f_{r}\in\Cr_2(\bF_q)$ such that $\Bs(f_r)$ supports at $a_1, a_2, b_1, b_2$ with multiplicity $2r$ (Lemma~\ref{lem:Ruffini}). We obtain an infinite sequence $\{f_1,f_2, f_3,  \ldots\}$ of elements in $\Cr_2(\bF_q)$ such that the left cosets $f_1 \BCr_2(\bF_q)$, $f_{2} \BCr_2(\bF_q), \dots$ are all pairwise disjoint. Indeed, for any $g\in\BCr_2(k)$ the multiplicity of $f_rg$ at $a_1,a_2,b_1,b_2$ is equal to $2r$.
\end{proof}

%----------On the non-normality
\subsection{On the non-normality}
\label{subsect:non-normal}

Over an algebraically closed field $k$, Blanc \cite{Bla10}*{Theorem~4.2} proved that $\Cr_2(k)$ has no non-trivial closed normal subgroup with respect to its natural topology. On the other hand, Cantat and Lamy \cite{CL13} proved that $\Cr_2(k)$ is not simple as an abstract group, and Lonjou generalized this result to any field $k$ \cite{L15}. Here we prove that $\BCr_2(k)$ is not a normal subgroup of $\Cr_2(k)$. For the kernel of the homomorphism $\BCr_n(k)\to\Sym(\bP^n(k))$, we prove that it is not a normal subgroup of $\Cr_n(k)$ when $k$ is finite and that it is trivial when $k$ is infinite.

\begin{prop}
\label{prop:baseprop_ns}
For any field $k$, the group $\BCr_2(k)$ is not a normal subgroup of $\Cr_2(k)$.
\end{prop}

\begin{proof} 
Let $f\in\Cr_2(k)$ be the standard quadratic involution $f : [x: y: z]\mapsto [yz: zx: xy]$ and $g\in\PGL_3(k)\subset\BCr_2(k)$ be any map sending $[1:0:0]$ to $[1:1:1]$.
Then $f^{-1}gf$ contracts the line $\{x=0\}$ to the point
$$
	f^{-1}gf([0: y: z]) = f^{-1}g([1: 0: 0]) = f^{-1}([1: 1: 1]) = [1: 1: 1].
$$
Therefore, $(f^{-1}gf)^{-1} = f^{-1}g^{-1}f$ possesses a $k$-point in its indeterminacy locus, and thus cannot be an element of $\BCr_2(k)$.
\end{proof}

\begin{prop}
\label{prop:not_normal}
Let $k$ be a finite field. Then the kernel of $\BCr_n(k)\rightarrow \Sym(\bP^n(k))$, where $n\geq2$, is not a normal subgroup of $\Cr_n(k)$.
\end{prop}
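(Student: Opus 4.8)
The plan is to exhibit an element $h\in\BCr_n(k)$ that lies in the kernel of $\BCr_n(k)\to\Sym(\bP^n(k))$, i.e.\ acts as the identity on $\bP^n(k)$, but whose conjugate by some $g\in\Cr_n(k)$ no longer acts as the identity on $\bP^n(k)$ — while still being an element of $\BCr_n(k)$ (otherwise conjugation would not even land in the group). The most economical source of such an $h$ is the map constructed in Section~\ref{sect:realizingPermutations}: for $n=2$ and $q$ odd, Proposition~\ref{prop:inducedmapP2} produces $f\in\BCr_2(\bF_q)$ which fixes all $\bF_q$-points away from a smooth conic $C_0$ and permutes $C_0(\bF_q)$ as a $(q+1)$-cycle. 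Composing $f$ with a suitable power so as to kill its action — or better, observing that we want the \emph{opposite}: we want an $h$ that is trivial on $\bP^n(k)$ but has a nontrivial conjugate — the cleaner route is to take $h$ itself to be trivial on $k$-points by construction. Concretely, the kernel is nontrivial (it contains, for instance, birational self-maps supported on fibers over non-rational points of a conic fibration, built exactly as in \S\ref{subsect:birationalConicFiber} but with $C_0$ replaced by a fiber with \emph{no} $\bF_q$-points), so pick any nontrivial $h$ in the kernel whose base locus $\Bs(h)$ contains no $k$-rational point.

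First I would fix such an $h\ne\mathrm{id}$ in $\ker\big(\BCr_n(k)\to\Sym(\bP^n(k))\big)$; since $h\ne\mathrm{id}$ as a birational map, its base locus $\Bs(h)\subset\bP^n(\overline k)$ is a nonempty proper closed subset containing no $k$-point (it acts as the identity on $\bP^n(k)$, so no $k$-point can be a base point). Next I would choose $g\in\PGL_{n+1}(k)\subset\BCr_n(k)$ generic enough that $g(\Bs(h))$ contains a $k$-rational point $p$ — this is possible because over a finite field $\PGL_{n+1}(k)$ acts on $\bP^n(\overline k)$ with enough transitivity to move some geometric point of $\Bs(h)$ onto a $k$-point only if that geometric point is itself $k$-rational, which it is not; so instead one argues that $g^{-1}$ maps some $k$-point $p$ \emph{into} $\Bs(h)$ — equivalently, choose $p\in\bP^n(k)$ arbitrary and pick $g\in\PGL_{n+1}(k)$ with $g(p)\in\Bs(h)$, which exists as long as $\Bs(h)$ is nonempty and $\PGL_{n+1}(k)$ acts transitively on $\bP^n(\overline k)\setminus\{\text{lower-dim.\ loci}\}$; more carefully, pick any geometric point $z\in\Bs(h)$ and any $k$-point $p$, and take $g\in\PGL_{n+1}(k)$ — wait, $g$ must be defined over $k$, so $g(p)$ is again a $k$-point and cannot equal $z$ unless $z\in\bP^n(k)$. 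The fix: do not conjugate by $g\in\PGL$, conjugate by the standard quadratic involution as in Lemma~\ref{lem:no_normal_subgroup}. So the actual plan is to mimic Lemma~\ref{lem:no_normal_subgroup} one level down: let $\tau\in\Cr_n(k)$ be a standard quadratic (or Cremona) involution contracting a hyperplane $H=\{x_0=0\}$ to a point, and let $h\in\BCr_n(k)$ be an element of the kernel whose indeterminacy locus, after applying $\tau^{-1}\cdot\tau$, produces a $k$-rational base point.

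The technical heart is therefore: (i) produce a nontrivial $h\in\ker(\BCr_n(k)\to\Sym(\bP^n(k)))$ — e.g.\ for $n=2$ use the fiberwise construction of \S\ref{subsect:birationalConicFiber} over a fiber of a conic bundle with no $\bF_q$-points, extended to $\bP^n$ by taking $h\times\mathrm{id}$ on $\bP^2\times\bP^{n-2}$ and transporting to $\bP^n$ via a birational identification, noting such $h$ is not the identity but fixes every $\bF_q$-point; (ii) choose $g\in\Cr_n(k)$ — a quadratic transformation centered so that $g^{-1}$ maps a chosen $k$-point $p$ into $\Bs(h)$ — and then $g^{-1}hg$ either fails to be defined at $p$ over $k$ (so $g^{-1}hg\notin\BCr_n(k)$, already contradicting normality of $\BCr_n(k)$, but that is Lemma~\ref{lem:no_normal_subgroup} not this statement) or, when arranged so that $g^{-1}hg\in\BCr_n(k)$, moves $p$ to a different $k$-point, hence $g^{-1}hg\notin\ker$. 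The main obstacle is ensuring simultaneously that the conjugate stays in $\BCr_n(k)$ (so that ``normal subgroup of $\Cr_n(k)$'' is even a meaningful failure) \emph{and} that it acts nontrivially on $\bP^n(k)$; the resolution is to take $g\in\PGL_{n+1}(k)$, under which $\BCr_n(k)$ is visibly stable, and to exploit that such a $g$ can move the \emph{support} of $h$'s nontrivial fiber-action onto a fiber that \emph{does} contain $k$-points, thereby turning the trivial permutation into a nontrivial one — this is exactly the mechanism behind the example in \S\ref{subsect:birationalConicFiber} that realizes a $(q+1)$-cycle, so in fact one may take $h=g_0^{-1}(\text{trivial-on-}k\text{ map})g_0$ directly and reverse the roles. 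I expect the write-up to consist of: constructing $h$ explicitly in coordinates (one line, borrowing Proposition~\ref{prop:extension}), exhibiting $g\in\PGL_3(k)$ moving the bad fiber to the good one, and checking by the formula that $g^{-1}hg$ permutes $\bP^2(k)$ nontrivially while obviously remaining in $\BCr_2(k)$; the $n>2$ case then follows by the embedding $\bP^2\hookrightarrow\bP^n$ as a linear subspace together with a standard extension of birational self-maps, or by repeating the construction verbatim with an $n$-dimensional conic/quadric bundle.
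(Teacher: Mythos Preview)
Your final plan has a fatal gap. You settle on conjugating by $g\in\PGL_{n+1}(k)$, but $N=\ker\big(\BCr_n(k)\to\Sym(\bP^n(k))\big)$ is the kernel of a group homomorphism out of $\BCr_n(k)$, hence automatically normal in $\BCr_n(k)$, and $\PGL_{n+1}(k)\subset\BCr_n(k)$. Concretely: if $h$ fixes every $k$-point and $g$ is defined over $k$, then for any $p\in\bP^n(k)$ the point $g(p)$ is again a $k$-point, so $h(g(p))=g(p)$ and $g^{-1}hg(p)=p$. The ``mechanism'' you describe---moving the fiber carrying $h$'s nontrivial action onto a fiber with $k$-points---cannot change this, because the induced permutation of $g^{-1}hg$ on $\bP^n(k)$ is $\sigma_g^{-1}\sigma_h\sigma_g=\mathrm{id}$ regardless of geometry. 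So no $g\in\PGL_{n+1}(k)$ will ever witness non-normality of $N$.

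Your earlier idea, which you abandon, is closer to workable but you dismiss it for the wrong reason. If you take $h\in N$ nontrivial and a non-linear $g\in\Cr_n(k)$ and show $g^{-1}hg\notin\BCr_n(k)$, that \emph{does} prove $N$ is not normal in $\Cr_n(k)$: you have $h\in N$ with $g^{-1}hg\notin N$. This is not ``just Lemma~\ref{lem:no_normal_subgroup}''---there the element being conjugated is a generic $g\in\PGL_3(k)$, which is certainly not in $N$. To make this work you would need to choose a specific $h\in N$ and a specific quadratic $\tau$ so that $\tau^{-1}h\tau$ acquires a $k$-rational base point; this requires controlling where $h$ fails to be the identity relative to the lines contracted by $\tau$, and you have not done that.

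For comparison, the paper's proof is entirely different and avoids constructing any explicit conjugate. It argues by contradiction: assuming $N\trianglelefteq\Cr_n(k)$, it uses a family of quadratic maps $f$ (and their $\PGL$-translates) that each contract a pair of hyperplanes, and the normality relation $fg=hf$ forces every $g\in N$ to preserve every union of two rational hyperplanes, hence every rational hyperplane. This forces $g$ to be linear, and a linear map fixing all $q^n+\cdots+1\ge n+2$ rational points is the identity. So $N=\{\mathrm{id}\}$, contradicting the fact that $N$ has finite index in the infinite group $\BCr_n(k)$.
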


\begin{proof}
Let $N$ denote the kernel of $\BCr_n(k)\rightarrow \Sym(\bP^2(k))$.
Suppose, to the contrary, that $N$ is a normal subgroup of $\Cr_n(k)$.
Let $\ell\in k[x_2,\dots,x_n]$ be a linear homogeneous polynomial. Consider the birational map
\[
    f\colon[x_0:\dots:x_n]\mapsto
    [x_0^2:x_1\ell:x_0x_2:\dots:x_0x_n]
\]
and its inverse
\[
    f^{-1}\colon [x_0:\dots:x_n]\mapsto
    [\ell x_0:x_1x_0:x_2\ell:\dots:x_n\ell].
\]
A straightforward computation shows that both $f$ and $f^{-1}$ contract two and only two hypersurfaces, namely, the hyperplanes $\{x_0=0\}$ and $\{\ell=0\}$.
Moreover, $f$ and $f^{-1}$, respectively, contracts the union $\{x_0=0\}\cup\{\ell=0\}$ onto
\[
    \Bs(f^{-1}) = \{\ell=x_0=0\}\cup\{\ell=x_1=0\},\quad
    \Bs(f) = \{x_0=x_1=0\}\cup\{x_0=\ell=0\}.
\]

For every $g\in N$, we claim that
\begin{equation}
\label{eqn:preserveHypers}
    g(\{x_0=0\}\cup\{\ell=0\})
    = \{x_0=0\}\cup\{\ell=0\}.
\end{equation}
First note that $g(\{x_0=0\}\cup\{\ell=0\})$ is a hypersurface due to the facts that $g$ is bijective and that $\{x_0=0\}\cup\{\ell=0\}$ contains $k$-points.
Since $N$ is normal, we have $fg=hf$ for some $h\in N$.
Suppose that $g(\{x_0=0\}\cup\{\ell=0\})$ is not contained in $\{x_0=0\}\cup\{\ell=0\}$.
Then $fg(\{x_0=0\}\cup\{\ell=0\})$ is a hypersurface while $hf(\{x_0=0\}\cup\{\ell=0\})$ is not, contradiction.
Therefore, we have
\[
    g(\{x_0=0\}\cup\{\ell=0\})
    \subset
    \{x_0=0\}\cup\{\ell=0\}.
\]
The same argument with $g$ replaced by $g^{-1}$ implies that
\[
    \{x_0=0\}\cup\{\ell=0\}
    \subset
    g(\{x_0=0\}\cup\{\ell=0\}).
\]
Hence (\ref{eqn:preserveHypers}) follows.
By applying the same argument with $f$ replaced by $\alpha f\alpha^{-1}$ for any $\alpha\in\Aut(\bP^n)$, we conclude that (\ref{eqn:preserveHypers}) holds for any union of two distinct rational hyperplanes.
This implies that $g$ preserves any rational hyperplane of $\bP^n$. 

Write $g\in N$ as
$
    g([x_0:\dots:x_n]) = [g_0:\dots:g_n]
$
where $g_i\in k[x_0,\dots,x_n]$ are homogeneous polynomials without a common factor.
As $g$ preserves each coordinate hyperplane $\{x_i=0\}$,
we have $g_i=x_ig_i'$ for some $g_i'\in k[x_0,\dots,x_n]$.
The fact that $g^{-1}$ also preserves each $\{x_i=0\}$ then implies that
\[
    g^{-1}(\{x_i=0\}) = \{g_i = 0\} = \{x_ig_i'=0\} = \{x_i=0\}
\]
hence $x_ig_i' = a_ix_i$ for some $a_i\in k^*$.
Therefore $g_i' = a_i\in k^*$ for all $i$ and so $g$ is linear.
Since $g\in N$, it fixes $|\bP^n(k)|=q^n+q^{n-1}+\cdots+q+ 1\geq n+2$ points in $\bP^n$, and thus equal to the identity map.
We conclude that $N=\{\mathrm{Id}\}$, which is a contradiction because $\BCr_n(k)$ is infinite by Lemma~\ref{lem:largebase} and $N$ is never trivial as it is of finite index in $\BCr_n(k)$.
\end{proof}

\begin{prop}
If $k$ is an infinite field, then $\BCr_n(k)\to\Sym(\bP^n(k))$, where $n\geq 1$, is injective.
\end{prop}

\begin{proof}
Every element in the kernel of $\BCr_n(k)\to\Sym(\bP^n(k))$ fixes $\bP^n(k)$, which is a Zariski dense subset of $\bP^n$. This forces such an element to be the identity map.
\end{proof}

\bigskip
\bibliography{BijectiveCremona_bib}
\bibliographystyle{alpha}

\ContactInfo
\end{document}